\newtheorem{theorem}{Theorem}[section]
\newtheorem{lemma}[theorem]{Lemma}
\newtheorem{corollary}[theorem]{Corollary}
\theoremstyle{definition}
\newtheorem{definition}[theorem]{Definition}
\newtheorem{remark}[theorem]{Remark}
\crefname{corollary}{Corollary}{Corollaries}
\Crefname{corollary}{Corollary}{Corollaries}
\crefname{subsection}{Section}{Sections}
\Crefname{subsection}{Section}{Sections}
\crefname{equation}{}{}
\Crefname{equation}{Formula}{Formulas}
\crefname{enumi}{}{}
\newcommand\f\frac
\newcommand\tx\text
\newcommand\intd{\mathop{}\!\mathrm{d}}
\newcommand\M{\mathrm M}
\newcommand\Md{\mathrm M^{\mathrm d}}
\newcommand\I{\mathrm I}
\newcommand\ind[1]{1_{#1}}
\newcommand\lm[1]{\mathcal{L}(#1)}
\newcommand\lmb[1]{\mathcal{L}\biggl(#1\biggr)}
\newcommand\sm[1]{\mathcal{H}^{n-1}(#1)}
\newcommand\mb[1]{{\partial_*}{#1}} %\mathop
\newcommand\tb[1]{{\partial}{#1}} %\mathop
\newcommand\B{\mathcal B}
\newcommand\avint\fint
\DeclareMathOperator\sle{l}
\newcommand\loc{\text{\normalfont loc}}
\newcommand\tc[1]{\overline{#1}}
\newcommand\mc[1]{\tc{#1}^*}
\newcommand\ti[1]{\mathring{#1}}
\title{Weighted fractional Poincar\'e inequalities via isoperimetric inequalities}
\author{Kim Myyryl\"ainen}
\address{Department of Mathematics, Aalto University, P.O. Box 11100, FI-00076 Aalto, Finland}
\email{kim.myyrylainen@aalto.fi}
\thanks{The research was supported by the Academy of Finland. K.~Myyryl\"ainen and J.~Weigt have also been supported by the Magnus Ehrnrooth Foundation.}
\author{Carlos P\'erez}
\address{Department of Mathematics, University of the Basque Country, IKERBASQUE 
(Basque Foundation for Science) and
BCAM \textendash  Basque Center for Applied Mathematics, Bilbao, Spain}
\email{cperez@bcamath.org}
\thanks{%
C. P\'erez is supported by grant  PID2020-113156GB-I00, Spanish Government; by the Basque Government through grant IT1615-22
and the BERC 2014-2017 program  and by the BCAM Severo Ochoa accreditation CEX2021-001142-S, Spanish Government.%
}
\author{Julian Weigt}
\address{%
Mathematics Institute,
Zeeman Building,
University of Warwick,
Coventry CV4 7AL,
United Kingdom}
\email{julian.weigt@warwick.ac.uk}
\thanks{J.~Weigt is also supported by the European Union’s Horizon 2020 research and innovation programme (Grant agreement No. 948021).}
\thanks{%
We are very grateful to the Department of Mathematics at Aalto University for its support where the initiation of this research took place. In particular, we are very grateful to Juha Kinnunen for his support and for the discussions we had.%
}
\subjclass[2020]{46E35, 42B25}
\keywords{Poincar\'e inequality, isoperimetric inequality, fractional, weight}
\begin{document}

\maketitle

\begin{abstract}

Our main result is a weighted fractional Poincar\'e--Sobolev  inequality improving the celebrated estimate by Bourgain--Brezis--Mironescu. 
This also yields an improvement of the classical Meyers--Ziemer theorem in several ways.
The proof is based on a fractional isoperimetric inequality
and is 
new even in the non-weighted setting.
We also extend the celebrated Poincar\'e--Sobolev estimate with $A_p$ weights of Fabes--Kenig--Serapioni  by means of a fractional type result in the spirit of Bourgain--Brezis--Mironescu.
Examples are given to show that the corresponding $L^p$-versions of weighted Poincar\'e inequalities do not hold for $p>1$.

\end{abstract}

\section{Introduction}

The classical $(q,p)$-Poincar\'e--Sobolev inequality states that
\begin{equation}
\label{intro:claspoin}
\biggl(
\int_Q
\lvert f-f_Q \rvert^q
\intd x
\biggr)^\frac{1}{q}
\leq
C
\biggl(
\int_Q
|\nabla f|^p
\intd x
\biggr)^\frac{1}{p} ,
\end{equation}
where $1\leq p<n$, $q=\frac{np}{n-p}$, $f\in W_\loc^{1,p}(\mathbb{R}^n)$, $Q\subset\mathbb{R}^n$ is a cube and \(C\) is a dimensional constant.
In 2002, Bourgain, Brezis and Mironescu~\cite{bbm2002} proved the following fractional $(q,p)$-Poincar\'e inequality
\begin{equation}
\label{intro:fracpoin}
\biggl(
\avint_Q \lvert f-f_Q \rvert^q \intd x \biggr)^\frac{1}{q}
\leq
C
(1-\delta)^\f1p
\sle(Q)^\delta
\biggl(
\avint_Q
\int_Q
\f{|f(x)-f(y)|^p}{|x-y|^{n+\delta p}}
\intd y
\intd x
\biggr)^\frac{1}{p} ,
\end{equation}
where $\f12\leq\delta<1$, $1\leq p<\frac{n}{\delta}$, $q=\frac{np}{n-\delta p}$, $f\in L_\loc^1(\mathbb{R}^n)$, $Q\subset\mathbb{R}^n$ is a cube and \(C\) is a dimensional constant.
Note the factor $(1-\delta)^\f1p$ in front of the fractional term which balances the limiting behaviour of the right-hand side when $\delta\to1$.
In particular, it was shown by Brezis~\cite{brezis2002constant} that without the factor the right-hand side of~\cref{intro:fracpoin} is infinite for non-constant functions when $\delta=1$.
Moreover, Bourgain, Brezis and Mironescu~\cite{BBManotherlook2001} showed that with this factor the fractional term coincides with the $L^p$ norm of the gradient when $\delta\to1$.
This means that in the limit \cref{intro:fracpoin} turns into the classical Poincar\'e inequality~\cref{intro:claspoin}.
Later,
Maz'ya and Shaposhnikova~\cite{mazya2002} 
proved the corresponding inequality in $\mathbb{R}^n$.
They showed in $\mathbb{R}^n$ that the fractional term multiplied with $\delta^\f1p$ coincides with the $L^p$ norm of the function when $\delta\to0$.
For other limiting behaviour results, 
we refer to
Alberico, Cianchi, Pick and Slav\'{\i}kov\'{a}~\cite{AlbericoCianchi2020},
Brezis, Van Schaftingen and Yung~\cite{brezisSchaftingenYung2021}, 
Drelichman and Dur\'{a}n~\cite{DrelichmanRicardo2022}
and
Karadzhov, Milman and Xiao~\cite{KaraMilmanXiao2005}.

The existing proofs of the fractional Poincar\'e inequality apply Fourier analysis techniques~\cite{bbm2002}, Hardy type inequalities~\cite{mazya2002} or 
compactness arguments~\cite{ponce2004}.
We give a new direct and transparent proof
using
a relative isoperimetric inequality as our main tool.
We concentrate on the case $p=1$ in \cref{intro:fracpoin}.
Our approach is based on a new fractional type isoperimetric inequality in \cref{lem:cla_weakcharf} which 
can be seen as an improvement of the classical relative isoperimetric inequality, see \cref{rem:cla_weakcharf_rhs}.
To our knowledge this approach with isoperimetric inequalities has not been considered in the fractional case before.
This allows further investigation of the theory of fractional Poincar\'e inequalities.

It is known that the classical $(1,1)$-Poincar\'e inequality implies the classical $(q,p)$-Poincar\'e inequality.
We investigate this in the fractional setting with $A_p$ weights.
The strategy is to first show that the fractional $(1,1)$-Poincar\'e inequality implies the fractional $(1,p)$-Poincar\'e inequality in \cref{prop:(1_1)-to-(1_p)}. 
Then we apply a self-improving property and a fractional truncation method to obtain the fractional $(q,p)$-Poincar\'e inequality with $A_p$ weights, see \cref{selfIMproveBadConstant} and \cref{selfIMproveGoodConstant}.
This extends the fractional Poincar\'e inequality 
in Hurri-Syrj{\"a}nen, Mart{\'\i}nez-Perales, P{\'e}rez and V{\"a}h{\"a}kangas~\cite{hurri2022} 
from $A_1$ weights to $A_p$ weights.
Self-improving results are discussed in
Canto and P\'{e}rez~\cite{CantoPerez2021},
Franchi, P\'{e}rez and Wheeden~\cite{FranchiPerezWheeden1998}, 
Lerner, Lorist and Ombrosi~\cite{LernerLoristOmbrosi2022}
and P\'{e}rez and Rela~\cite{PerezRela2019}.
For fractional truncation methods, see
Chua~\cite{chua2019},
Dyda, Ihnatsyeva and V\"{a}h\"{a}kangas~\cite{DydaLizavetaVahakangas2016},
Dyda, Lehrb{\"a}ck and V{\"a}h{\"a}kangas~\cite{DydaLehrbackVahakangas2022}
and Maz'ya~\cite{mazyabook}.

Our proof for the fractional Poincar\'e inequality also works when we measure the oscillation against a Radon measure \(\mu\).
Our main result \cref{thm:wfracpoincare} states that
\[
\biggl( \int_Q \lvert f-f_Q \rvert^q \intd\mu \biggr)^\frac{1}{q}
\leq
C
(1-\delta)
\int_Q
\int_Q
\f{|f(x)-f(y)|}{|x-y|^{n+\delta}}
\intd y
\, (\M_\alpha\mu(x))^\frac{1}{q}
\intd x
\]
for $0\leq\delta<1$, $1\leq q \leq \frac{n}{n-\delta}$ and where \(\M_\alpha\mu\) is the fractional maximal function with $\alpha = n-q(n-\delta)$.
This extends~\cite[Theorem~2.10]{hurri2022} to all values $0\leq\delta<1$ and exponents $1\leq q\leq\frac{n}{n-\delta}$.
Weighted classical Poincar\'e inequalities have been studied extensively
starting from the classical result by Meyers and Ziemer~\cite{MeyersZiemer1977}
and generalized to
\begin{equation}
\label{intro:weightpoin}
\biggl( \int_Q \lvert f-f_Q \rvert^q \intd\mu \biggr)^\frac{1}{q}
\leq
C
\int_Q
|\nabla f|
\,
(\M_\alpha\mu)^\frac{1}{q}
\intd x
\end{equation}
for $1\leq q \leq \frac{n}{n-1}$, $\alpha = n-q(n-1)$
by Franchi, P\'{e}rez and Wheeden in \cite{perez2000}.
With \cref{thm:wfracpoincare} we extend their results to the fractional setting and are also able to deduce their original results from ours, see \cref{cor:fracpoin_growthcond,cor:weight-clas-poincare}.
Moreover, in~\cite{perez2000} they show \cref{intro:weightpoin} in two separate ranges of $q$ and their constant \(C\) blows up when $q\to1$. 
In our argument, \(C\) is uniformly bounded in $q$ and depends only on the dimension.
We also give an alternative proof by applying the relative isoperimetric inequality to highlight the differences between the classical and the fractional Poincar\'e inequalities.

It would be a natural question to ask if the weighted fractional or classical Poincar\'e inequality holds for $p>1$ as in \cref{intro:claspoin,intro:fracpoin}.
However, this is not the case.
We construct counterexamples in \cref{sec:counterexample}.
This answers a question regarding the weighted classical Poincar\'e inequality posed in~\cite{perez2000}.

\section{Preliminaries}

Let $\mathbb{N} = \{1,2,\dots\}$ and $\mathbb{N}_0 = \mathbb{N}\cup\{0\}$.
Unless otherwise stated, constants are positive and dependent only on the dimension $n$.
We denote the standard Euclidean norm of a point $x\in\mathbb{R}^n$ by $\lvert x \rvert$.
The Lebesgue measure of a measurable subset  $A$ of $\mathbb{R}^n$ is denoted by $\lm{A}$
and the $s$-dimensional Hausdorff measure is denoted by $\mathcal{H}^{s}(A)$.
The absolute continuity of a measure $\mu$ with respect to another measure $\nu$ is denoted by $\mu\ll\nu$, that is, $\nu(A)=0$ implies $\mu(A)=0$.

Assume that $A \subset \mathbb{R}^n$ is a measurable set with $0<\lm{A}<\infty$ and that $f:A\rightarrow [-\infty, \infty]$ is a measurable function.
The maximal median of $f$ over $A$ is defined by
\[
m_f(A) = \inf
\biggl\{
a \in \mathbb{R} : \lm{\{x \in A: f(x) > a\}} < \frac{1}{2} \lm A
\biggr\} .
\]
The integral average of $f \in L^1(A)$ on $A$
is denoted by
\[
f_A = \avint_{A} f \intd x = \frac{1}{\lm{A}} \int_{A} f \intd x .
\]
We write
\[
\{f>\lambda\} = \{x\in\mathbb{R}^n : f(x)>\lambda \}
\]
for the superlevel set of a function $f:\mathbb{R}^n\to\mathbb{R}$. 
We define $\{f<\lambda\}$ similarly.

A cube $Q\subset\mathbb R^n$ is the product of \(n\) closed intervals of the same length, with sides parallel to the coordinate axes and equally long, that is,
$Q=[a_1,a_1+l] \times\dots\times [a_n,a_n+l]$.
In particular, we always consider a cube to be closed and axes-parallel.
All our results hold for half open cubes as well.
If we additionally assume that the measures in our results are absolutely continuous with respect to the Lebesgue measure then we can also use open cubes.
We denote by \(\sle(Q)=l\) the side length of $Q$.

Let $Q_0 \subset \mathbb R^n$ be a cube. 
For each \(k\in\mathbb N_0\) we denote by \(\mathcal D_k(Q_0)\) the set of dyadic subcubes of \(Q_0\) of generation \(k\).
Particularly, $\mathcal{D}_k(Q_0)$ consists of $2^{kn}$ cubes $Q$
with pairwise disjoint interiors and 
with side length $\sle(Q)=2^{-k}\sle(Q_0)$,
such that $Q_0$ equals the union of all cubes in \(\mathcal D_k\) up to a set of measure zero.
If $k\ge 1$ and $Q\in\mathcal{D}_k(Q_0)$, there exists a unique cube $Q'\in\mathcal{D}_{k-1}(Q_0)$
with $Q\subset Q'$. The cube $Q'$ is called the dyadic parent of $Q$, and $Q$ is a dyadic child of $Q'$.
The set of dyadic subcubes $\mathcal{D}(Q_0)$ of $Q_0$ is defined as
$\mathcal{D}(Q_0)=\bigcup_{k=0}^\infty\mathcal{D}_k(Q_0)$.

The following \lcnamecref{C-Z} is a variant of the classical Calder\'{o}n--Zygmund decomposition for sets.

\begin{lemma}
\label{C-Z}
Let $Q \subset \mathbb R^n$ be a cube and $E \subset \mathbb R^n$ a measurable set.
Assume that
\[
\lm{Q\cap E} \leq \lambda \lm{Q}
\]
holds for some $0<\lambda<1$.
Then there exist countably many pairwise disjoint dyadic cubes $Q_i \in\mathcal D(Q)$, $i\in\mathbb N$, such that
\begin{enumerate}[(i)]
\item $Q \cap E \subset \bigcup_i Q_i$ up to a set of Lebesgue measure zero,
\item $ \lm{Q_i \cap E} > 2^{-n} \lambda \lm{Q_i}$,
\item $ \lm{Q_i \cap E} \leq \lambda \lm{Q_i}$.
\end{enumerate}
If \(E\) is relatively open with respect to \(Q\) then $Q \cap E \subset \bigcup_i Q_i$ holds literally and not only up to a set of measure zero.
The cubes in the collection $\{Q_i\}_{i\in\mathbb N}$ are called the Calder\'{o}n--Zygmund cubes in $Q$ at level~$\lambda$.
\end{lemma}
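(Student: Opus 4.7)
The plan is a straightforward dyadic stopping-time selection. Let $\mathcal F$ denote the collection of all dyadic subcubes $Q'\in\mathcal D(Q)$ that satisfy the selection condition
\[
\lm{Q'\cap E}>2^{-n}\lambda\lm{Q'},
\]
and let $\{Q_i\}_i$ be the maximal elements of $\mathcal F$ with respect to inclusion (those not strictly contained in any other member of $\mathcal F$). Since two cubes in $\mathcal D(Q)$ are always either nested or have disjoint interiors, the $Q_i$'s are automatically pairwise disjoint up to their boundaries, the family is at most countable, and property~(ii) is built into the definition of $\mathcal F$.

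To verify (iii) I would split into two cases. If $Q_i=Q$, then the hypothesis of the lemma gives the bound directly. Otherwise $Q_i$ has a dyadic parent $R\in\mathcal D(Q)$ which, by the maximality of $Q_i$, does not lie in $\mathcal F$. Hence $\lm{R\cap E}\leq 2^{-n}\lambda\lm{R}$, and combining $Q_i\cap E\subset R\cap E$ with $\lm{R}=2^n\lm{Q_i}$ yields $\lm{Q_i\cap E}\leq\lambda\lm{Q_i}$.

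For (i) I would invoke the dyadic Lebesgue differentiation theorem applied to $\ind{E}$: for almost every $x\in Q\cap E$ the dyadic averages $\lm{Q'\cap E}/\lm{Q'}$ over the nested cubes $Q'\in\mathcal D(Q)$ containing $x$ tend to $1$ as $\sle(Q')\to 0$. In particular, for every such $x$ there is a dyadic cube $Q'\ni x$ that belongs to $\mathcal F$, and the unique maximal cube of $\mathcal F$ containing $Q'$ is some $Q_i$, giving $x\in\bigcup_i Q_i$.

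Finally, when $E$ is relatively open in $Q$, I would replace the differentiation step by a direct argument to get the literal inclusion: every $x\in Q\cap E$ admits a relative neighborhood $B(x,r)\cap Q\subset E$, and since dyadic cubes in $\mathcal D(Q)$ containing $x$ have diameters at most $\sqrt n\,2^{-k}\sle(Q)\to 0$, a sufficiently small such cube is contained in $E$ and therefore lies in $\mathcal F$. The whole argument is a routine Calder\'on--Zygmund stopping time construction; the only place requiring even mild care is remembering to treat the boundary case $Q_i=Q$ separately in (iii), since the parent argument only applies when $Q_i\subsetneq Q$.
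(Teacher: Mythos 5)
Your proof is correct and follows essentially the same approach as the paper: the maximal elements of $\mathcal F$ are exactly the cubes produced by the paper's top-down stopping-time subdivision (a dyadic cube is maximal in $\mathcal F$ precisely when it satisfies the selection condition while every proper dyadic ancestor fails it), and the verifications of (i)--(iii) and of the relatively-open case use the same parent-cube and dyadic Lebesgue differentiation arguments as the paper.
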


\begin{proof}

If
\[
\lm{Q\cap E} > 2^{-n} \lambda \lm{Q} ,
\]
we pick $Q$ and observe that $Q$ satisfies the required properties.
Otherwise, if 
\[
\lm{Q\cap E} \leq 2^{-n} \lambda \lm{Q} ,
\]
we decompose $Q$ into dyadic subcubes that satisfy the required properties in the following way.
Start by decomposing
$Q$ into $2^n$ dyadic subcubes $Q_1 \in\mathcal{D}_1(Q)$.
We select those $Q_1$ for which
$\lm{Q_1 \cap E} > 2^{-n} \lambda \lm{Q_1}$ and denote this collection by $\{Q_{1,j}\}_j$.
If $\lm{Q_1 \cap E} \leq 2^{-n} \lambda \lm{Q_1}$,
we subdivide $Q_1$ into $2^n$ dyadic subcubes $Q_2\in\mathcal{D}_2(Q)$ and select $Q_2$ for which
$\lm{Q_2 \cap E} > 2^{-n} \lambda \lm{Q_2}$.
We denote so obtained collection by $\{Q_{2,j}\}_j$.

At the $i$th step, we partition unselected $Q_{i-1}$ into dyadic subcubes $Q_i\in\mathcal{D}_i(Q)$ and select those $Q_i$ for which $\lm{Q_i \cap E} > 2^{-n} \lambda \lm{Q_i}$.
Denote the obtained collection by $\{Q_{i,j}\}_j$.
If $\lm{Q_i \cap E} \leq 2^{-n} \lambda \lm{Q_i}$, we continue the selection process in $Q_i$.
In this manner we obtain a collection 
$\{Q_{i,j}\}_{i,j}$
of pairwise disjoint dyadic subcubes of $Q$.
Reindex $\{Q_i\}_i = \{Q_{i,j}\}_{i,j}$.
We show that $\{Q_i\}_i$ satisfies the required properties.

Let $x\in Q\setminus \bigcup_i Q_i$. There exists a decreasing sequence $\{Q_k\}_k$ of dyadic subcubes of \(Q\) containing $x$ such that $Q_{k+1} \subsetneq Q_k$
and
$\lm{Q_k \cap E} \leq 2^{-n} \lambda \lm{Q_k}$ for every $k\in\mathbb{N}$.
If \(E\) is relatively open then for \(k\) large enough we have \(Q_k\subset E\cap Q\), a contradiction.
If \(E\) is a general measurable set, then we have by the Lebesgue differentiation theorem that $\ind{E}(x) \leq 2^{-n}\lambda$ for almost every $x\in Q\setminus \bigcup_i Q_i$ 
and thus $Q \cap E \subset \bigcup_i Q_i$ up to a set of Lebesgue measure zero.
This proves (i).
Property (ii) holds 
by the definition of $Q_i$.
By the selection process, it holds that $\lm{Q'_i \cap E} \leq 2^{-n} \lambda \lm{Q'_i}$ for every $i\in\mathbb{N}$, where $Q'_i$ is the dyadic parent cube of $Q_i$.
Hence, we have
\[
\lm{Q_i \cap E} \leq \lm{Q'_i \cap E} \leq 2^{-n} \lambda \lm{Q'_i} = \lambda \lm{Q_i}.
\]
This proves (iii).
\end{proof}

Let $\mu$ be a Radon measure.
The fractional maximal function of $\mu$ is defined by
\[
\M_\alpha\mu(x) = \sup_{Q \ni x} \sle(Q)^\alpha \f{\mu(Q)}{\lm{Q}} .
\]
For $\alpha=0$, we have the classical Hardy--Littlewood maximal function $\M=\M_0$.
Let $Q_0\subset\mathbb{R}^n$.
The dyadic local counterpart is defined by
\[
\Md_{\alpha,Q_0}\mu(x)
=
\sup_{\substack{ Q\ni x, \\ Q\in\mathcal D(Q_0) }}
\sle(Q)^\alpha \f{\mu(Q)}{\lm{Q}}
,
\]
where we take the supremum only over the dyadic subcubes of \(Q_0\).

For a measurable set \(E\subset\mathbb{R}^n\) denote by \(\ti E\), \(\tc E\) and \(\tb E\) the topological interior, closure and boundary of \(E\), respectively.
The measure theoretic closure and the measure theoretic boundary of $E$ are defined by
\[
\mc E
=
\biggl\{x:\limsup_{r\rightarrow0}\f{\lm{B(x,r)\cap E}}{r^n}>0\biggr\}
\qquad\text{and}\qquad
\mb E
=
\mc E
\cap
\mc{\mathbb{R}^n\setminus E}
.
\]
The measure theoretic versions are robust against changes with measure zero.
Note that \(\mc E\subset\tc E\) and thus \(\mb E\subset\tb E\).
For a cube, its measure theoretic boundary and its closure agree with the respective topological quantities.

We will need the following relative isoperimetric inequality~\cite[Theorem~5.11]{evansgariepy}.
\begin{lemma}
\label{rel.iso.ineq}
Let $Q \subset \mathbb R^n$ be a cube and $E$ a set of finite perimeter. Then there exists a dimensional constant $C$ such that
\[
\min\bigl\{ \lm{Q \cap E}, \lm{Q \setminus E} \bigr\}^\frac{n-1}{n} \leq C  \sm{Q \cap \mb{E}} .
\]
\end{lemma}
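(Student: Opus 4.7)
My plan is to deduce the relative isoperimetric inequality from the BV Sobolev--Poincar\'e inequality
\[
\biggl(\int_Q |u-u_Q|^{n/(n-1)} \intd x\biggr)^{(n-1)/n} \leq C |Du|(Q)
\]
by plugging in the characteristic function $u=\ind{E}$. Without loss of generality I assume $\lm{Q\cap E}\leq \lm{Q\setminus E}$, so that $u_Q=\lm{Q\cap E}/\lm{Q}\leq 1/2$ and the minimum on the left of the claim equals $\lm{Q\cap E}$. With these normalisations the right-hand side of the Sobolev--Poincar\'e bound becomes $|D\ind E|(Q)=\sm{Q\cap \mb E}$ by De Giorgi's structure theorem, and the left-hand side evaluates to
\[
\bigl(\lm{Q\cap E}(1-u_Q)^{n/(n-1)} + \lm{Q\setminus E} u_Q^{n/(n-1)}\bigr)^{(n-1)/n}
\geq 2^{-1}\lm{Q\cap E}^{(n-1)/n},
\]
which gives the required lower bound up to an absolute constant.

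To justify the Sobolev--Poincar\'e inequality on $Q$ used in the first step, I would first prove it for smooth functions by combining the Gagliardo--Nirenberg--Sobolev inequality in $\mathbb{R}^n$ with a $(1,1)$-Poincar\'e inequality on the cube (the latter is elementary via integration along segments joining pairs of points, using convexity of $Q$) applied after subtracting $u_Q$ and extending by reflection across the faces of $Q$; then I would pass to a general $u\in BV(Q)$ by mollification, using lower semicontinuity of the total variation and strong $L^{n/(n-1)}$ convergence of the approximants, exactly as in Chapter~5 of Evans--Gariepy.

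There is one technical point to address: $\mb E$ may meet $\tb Q$, and the Sobolev--Poincar\'e inequality measures the distributional derivative on the open cube. I would handle this by first applying the inequality on the open cube $\ti Q$, so that $|D\ind E|(\ti Q)\leq \sm{Q\cap \mb E}$, and then observe that the left-hand side is unchanged when replacing $Q$ by $\ti Q$ since their symmetric difference has Lebesgue measure zero. The main obstacle, if one wishes a fully self-contained argument, is the Sobolev--Poincar\'e inequality for BV on a cube; everything else in the reduction is a one-line computation with the characteristic function.
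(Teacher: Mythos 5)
The paper does not prove this lemma; it cites it as \cite[Theorem~5.11]{evansgariepy} and uses it as a black box. Your proposal is essentially a reconstruction of the proof given in that reference: plug $u=\ind E$ into the $(n/(n-1),1)$-Sobolev--Poincar\'e inequality for $BV$ functions on a cube, identify $|D\ind E|$ with $\mathcal H^{n-1}$ restricted to the (reduced) boundary via De Giorgi's structure theorem (together with Federer's theorem, which is what actually lets you pass from the reduced boundary $\partial^*E$ to the measure-theoretic boundary $\mb E$), and do a short computation on the left-hand side. Your computation is correct, your handling of the open versus closed cube is the right fix, and your sketch of how to obtain the $BV$ Sobolev--Poincar\'e inequality on a cube (smooth case via extension plus Gagliardo--Nirenberg--Sobolev, then lower semicontinuity under mollification) is the standard route. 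So your argument is correct and is the same argument as the cited source; it is not an alternative to anything in the paper since the paper simply quotes the result.

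One small point of precision: you attribute $|D\ind E|=\mathcal H^{n-1}\llcorner\partial^*E$ to De Giorgi and then silently replace $\partial^*E$ by $\mb E$; the step $\mathcal H^{n-1}(\mb E\setminus\partial^*E)=0$ is Federer's theorem, not De Giorgi's, and is worth naming explicitly since the lemma is stated with $\mb E$.
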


\section{Fractional type isoperimetric inequality}

This section discusses a rougher fractional type isoperimetric inequality, \cref{lem:cla_weakcharf}, which is used later to prove the weighted fractional Poincar\'e inequality.
To prove this fractional isoperimetric inequality, we need first some auxiliary results.

\begin{lemma}
\label{lem_smallregion}
Let $Q_0\subset\mathbb{R}^n$ be a cube, $a\leq\sle(Q_0)/2$ and $0<\varepsilon<\frac{1}{2}$.
Let $Q\subset Q_0$ be a cube with \(\sle(Q) \leq a\f{\sqrt\pi}{2^{n+4}n}\).
Then for any measurable set $E\subset\mathbb{R}^n$ with
\[
\varepsilon
\leq
\f{
\lm{Q \cap E}
}{
\lm{Q}
}
\leq
1-\varepsilon
\]
we have
\[
\lm{Q}
\leq
\f4\varepsilon
\int_{Q}
\biggl|
\ind E(x)
- 
\f{
\lm{A(x) \cap E}
}{
\lm{A(x)}
}
\biggr|
\intd x ,
\]
where 
$A(x) = Q_0 \cap B(x,a)\setminus B(x,a/2)$.
\end{lemma}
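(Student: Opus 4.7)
The strategy is to exploit that $g(x) := \lm{A(x)\cap E}/\lm{A(x)}$ is nearly constant on $Q$—since $\sle(Q)\ll a$, the annuli $A(x)$ for $x\in Q$ are perturbations of a single reference annulus $A(x_0)$, and so the average $g$ varies little across $Q$. Combined with the density hypothesis, which forces $\ind E$ to stay at $L^1(Q)$-distance at least $\varepsilon\lm Q$ from any constant, this will produce the claimed lower bound on $\int_Q|\ind E - g|\intd x$.

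Concretely, fix a reference $x_0\in Q$ (say its center) and set $\alpha=g(x_0)$. I would first establish the Lipschitz-type estimate
\[
|g(x)-g(x_0)|\leq \frac{2\lm{A(x)\triangle A(x_0)}}{\min(\lm{A(x)},\lm{A(x_0)})}
\]
via the quotient identity $E_x V_{x_0}-E_{x_0}V_x = V_x(E_x-E_{x_0})-E_x(V_x-V_{x_0})$, where $E_y=\lm{A(y)\cap E}$ and $V_y=\lm{A(y)}$. The numerator is bounded using $\lm{B(x,r)\triangle B(x_0,r)}\lesssim \omega_n n r^{n-1}|x-x_0|$ (thin spherical shell, applied with $r=a$ and $r=a/2$); the denominator via $\lm{A(x)}\geq \omega_n a^n(1-2^{-n})/2^n$, the worst case being $x$ near a corner of $Q_0$ where only a single orthant of the annulus survives. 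The size hypothesis $\sle(Q)\leq a\sqrt\pi/(2^{n+4}n)$ is calibrated so that, after combining $|x-x_0|\leq \sqrt n\,\sle(Q)$ with these estimates, one obtains $|g(x)-g(x_0)|\leq 1/4$ pointwise on $Q$.

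Given this pointwise control, the conclusion follows by a simple dichotomy on $g(x_0)$. If $g(x_0)\leq 1/2$, then $g(x)\leq 3/4$ on $Q$, hence $1-g(x)\geq 1/4$, and
\[
\int_Q |\ind E - g|\intd x \geq \int_{Q\cap E}(1-g)\intd x \geq \frac{\lm{Q\cap E}}{4}\geq \frac{\varepsilon\,\lm{Q}}{4}.
\]
Symmetrically, if $g(x_0)>1/2$ then $g(x)\geq 1/4$ on $Q$, and $\int_Q|\ind E - g|\intd x\geq \int_{Q\setminus E}g\intd x\geq \varepsilon\lm Q/4$.

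The main obstacle is the Lipschitz bound. The crude estimate combining the worst case $\lm{A(x)}\sim 2^{-n}\omega_n a^n$ with the full-space symmetric difference $\lm{B(x,a)\triangle B(x_0,a)}$ loses the geometry and produces a ratio of order $\sqrt{n\pi}$, which is useless. The proof must exploit that whenever $\lm{A(x)}$ is small due to $x$ lying near a corner of $Q_0$, the sym-diff $\lm{A(x)\triangle A(x_0)}$ restricted to $Q_0$ is shrunk by essentially the same factor $2^{-n}$, so that the two $2^{-n}$'s cancel and the ratio reduces to the interior expression $\sim n|x-x_0|/a$. Making this precise amounts to controlling the angular profile $\mathcal H^{n-1}(Q_0\cap \partial B(x,r))$ as $r$ varies through $[a/2,a]$, using the monotonicity coming from the convexity of $Q_0$.
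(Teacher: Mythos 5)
Your overall strategy matches the paper's proof: center $x_0$, show $g(x)=\lm{A(x)\cap E}/\lm{A(x)}$ is within $1/4$ of $g(x_0)$ on $Q$ using the quotient identity, then run the dichotomy on $g(x_0)$ and integrate over $Q\cap E$ or $Q\setminus E$. The closing paragraph, however, misdiagnoses a non-issue and then proposes a remedy that is neither needed nor justified.

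The alleged obstacle disappears once the right symmetric-difference estimate for translated balls is used. You invoke the ``thin spherical shell'' bound $\lm{B(x,r)\triangle B(x_0,r)}\lesssim n\omega_n r^{n-1}|x-x_0|$, i.e.\ the surface area of the sphere. The sharp bound comes instead from projecting onto the hyperplane perpendicular to $x-x_0$: every fiber of $B(x_0,r)\setminus B(x,r)$ parallel to $x-x_0$ has length at most $|x-x_0|$, and the projection has measure $\sigma_{n-1}r^{n-1}$ (the $(n-1)$-dimensional cross section, not the surface area), giving $\lm{B(x_0,r)\setminus B(x,r)}\leq\sigma_{n-1}r^{n-1}|x-x_0|$. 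Since $n\sigma_n/\sigma_{n-1}\sim\sqrt{2\pi n}$, your version is worse by roughly $\sqrt n$, and that extra $\sqrt n$ is exactly the spurious $\sqrt{n\pi}$ you computed and called ``useless.'' With the cross-sectional bound and the worst-case denominator $\lm{A(x)}\geq\f{1-2^{-n}}{2^n}\sigma_n a^n$, the hypothesis $\sle(Q)\leq a\sqrt\pi/(2^{n+4}n)$ together with $|x-x_0|\leq\f{\sqrt n}{2}\sle(Q)$ and Wendel's inequality $\sigma_{n-1}/\sigma_n\leq\sqrt{(n+1)/(2\pi)}$ yields $|g(x)-g(x_0)|\leq 1/4$ directly, with no corner analysis at all; this is exactly what the paper does. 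The ``corner cancellation'' heuristic you propose instead (that $\lm{A(x)\triangle A(x_0)}\cap Q_0$ is shrunk by the same $2^{-n}$ factor as $\lm{A(x)}$ near a corner of $Q_0$) is not obviously true and is in any case unnecessary; you should delete that paragraph and replace the shell bound by the projection bound.
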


\begin{proof}

Denote the center of \(Q\) by \(x_0\).
Let \(x\in Q\).
Then we have
\begin{equation}
\label{eq:boundondist}
|x-x_0|
\leq
\f{\sqrt n}2\sle(Q)
\leq
a\f{\sqrt\pi}{2^{n+5}\sqrt n}
.
\end{equation}
Our first step is to show that \cref{eq:boundondist} implies
\begin{equation}
\label{eq:average_difference}
\biggl|
\f{
\lm{A(x) \cap E}
}{
\lm{A(x)}
}
-
\f{
\lm{A(x_0) \cap E}
}{
\lm{A(x_0)}
}
\biggr|
\leq
\f14
.
\end{equation}
Denote by \(\sigma_n\) the \(n\)-dimensional Lebesgue measure of the unit ball in \(n\) dimensions.
Then
\begin{align*}
\bigl|
\lm{A(x) \cap E}
-
\lm{A(x_0) \cap E}
\bigr|
&=
\bigl|
\lm{A(x) \cap E \setminus A(x_0)}
-
\lm{A(x_0) \cap E \setminus A(x)}
\bigr|
\\
&\leq
\max\bigl\{
\lm{A(x_0)\setminus A(x)}
,
\lm{A(x)\setminus A(x_0)}
\bigr\}
\\
&\leq
(a^{n-1}\sigma_{n-1}+(a/2)^{n-1}\sigma_{n-1})
|x-x_0|
\\
&=
(1+2^{-n+1})a^{n-1}\sigma_{n-1}
|x-x_0|,
\end{align*}
\begin{figure}%
\includegraphics{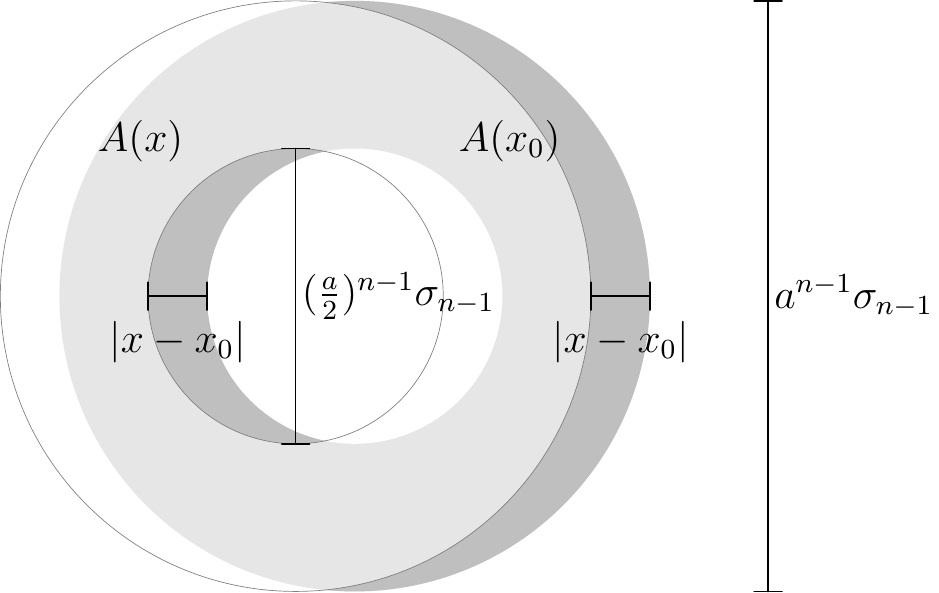}%
\caption{Difference of two shifted annuli.}%
\label{fig:adifference}%
\end{figure}%
where the second inequality follows from the fact that we can estimate the difference of shifted annuli by two differences of shifted balls as illustrated in \cref{fig:adifference}.
This implies
\begin{equation}
\label{eq:multiplied_difference}
\begin{split}
&\bigl|
\lm{A(x) \cap E}
\lm{A(x_0)}
-
\lm{A(x_0) \cap E}
\lm{A(x)}
\bigr|
\\
&\qquad\leq
\bigl|
\lm{A(x) \cap E}
\lm{A(x_0)}
-
\lm{A(x_0) \cap E}
\lm{A(x_0)}
\bigr|
\\
&\qquad\quad
+
\bigl|
\lm{A(x_0) \cap E}
\lm{A(x_0)}
-
\lm{A(x_0) \cap E}
\lm{A(x)}
\bigr|
\\
&\qquad=
\bigl|
\lm{A(x) \cap E}
-
\lm{A(x_0) \cap E}
\bigr|
\lm{A(x_0)}
\\
&\qquad\quad
+
\bigl|
\lm{A(x_0)}
-
\lm{A(x)}
\bigr|
\lm{A(x_0) \cap E}
\\
&\qquad\leq
2
(1+2^{-n+1})a^{n-1}\sigma_{n-1}
|x-x_0|
\lm{A(x_0)}
.
\end{split}
\end{equation}
By the formula \(\sigma_n=\pi^{\f n2}/\Gamma(\frac{n}{2}+1)\) and~\cite{wendel1948}, we have
\[
\f{\sigma_{n-1}}{\sigma_n}
\leq
\sqrt{
\f{n+1}{2\pi}
}
.
\]
The inequality
\[
\f
{(1+2^{-n+1})\sqrt{n+1}}
{(1-2^{-n})\sqrt n}
\leq
4\sqrt 2
\]
clearly holds for \(n=1\), and thus for all \(n\in\mathbb N\), as the left-hand side is decreasing in \(n\).
Combining the two previous inequalities with \cref{eq:boundondist}
and
\begin{equation}
\label{eq:Avolume2}
\lm{A(x)}
\geq
\f{1-2^{-n}}{2^n} 
\sigma_n a^n ,
\end{equation}
we obtain
\[
(1+2^{-n+1})a^{n-1}\sigma_{n-1}
|x-x_0|
\leq
\f18\lm{A(x)}
.
\]
Thus, \cref{eq:multiplied_difference} implies
\begin{align*}
\bigl|
\lm{A(x) \cap E}
\lm{A(x_0)}
-
\lm{A(x_0) \cap E}
\lm{A(x)}
\bigr|
\leq
\f14
\lm{A(x)}
\lm{A(x_0)}
.
\end{align*}
Dividing the previous inequality by \(\lm{A(x)}\lm{A(x_0)}\), we conclude \cref{eq:average_difference}.

If
\[
\f{
\lm{A(x_0) \cap E}
}{
\lm{A(x_0)}
}
\geq
\f12 ,
\]
then it holds that
\begin{align*}
\f{
\lm{A(x) \cap E}
}{
\lm{A(x)}
}
\geq
\f{
\lm{A(x_0) \cap E}
}{
\lm{A(x_0)}
}
-
\biggl|
\f{
\lm{A(x) \cap E}
}{
\lm{A(x)}
}
-
\f{
\lm{A(x_0) \cap E}
}{
\lm{A(x_0)}
}
\biggr|
\geq
\f12
-
\f14
=
\f14 .
\end{align*}
On the other hand, if
\[
\f{
\lm{A(x_0) \cap E}
}{
\lm{A(x_0)}
}
<
\f12 ,
\]
then
\begin{align*}
\biggl|
1-
\f{
\lm{A(x) \cap E}
}{
\lm{A(x)}
}
\biggr|
&\geq
1
-
\f{
\lm{A(x_0) \cap E}
}{
\lm{A(x_0)}
}
-
\biggl|
\f{
\lm{A(x) \cap E}
}{
\lm{A(x)}
}
-
\f{
\lm{A(x_0) \cap E}
}{
\lm{A(x_0)}
}
\biggr|
\geq
1
-
\f12
-
\f14
=
\f14 .
\end{align*}
As a consequence there is \(i\in\{0,1\}\) such that
\[
\biggl|
i-
\f{
\lm{A(x) \cap E}
}{
\lm{A(x)}
}
\biggr|
\geq
\f14
\]
for every \(x\in Q\).
Denote \(F=E\) if \(i=1\) and \(F=\mathbb{R}^n\setminus E\) if \(i=0\).
By the assumption, we have
\[
\lm{Q \cap F} 
\geq 
\varepsilon 
\lm{Q} .
\]
Then we can conclude that
\begin{align*}
\lm{Q}
&\leq
\f1\varepsilon
\int_{Q \cap F}
1
\intd x
\leq
\f4\varepsilon
\int_{Q \cap F}
\biggl|
i
-
\f{
\lm{A(x) \cap E}
}{
\lm{A(x)}
}
\biggr|
\intd x
\leq
\f4\varepsilon
\int_{Q}
\biggl|
\ind E(x)
-
\f{
\lm{A(x) \cap E}
}{
\lm{A(x)}
}
\biggr|
\intd x
.
\end{align*}
The proof is complete.
\end{proof}

The next \lcnamecref{lem_onescale} is a decomposition of a set near its boundary into cubes.

\begin{lemma}
\label{lem_onescale}
Let $Q_0 \subset \mathbb R^n$ be a cube and $E \subset \mathbb R^n$ a measurable set
such that
\[
\f1{2^{n+1}}
\leq
\f{
\lm{Q_0 \cap E}
}{
\lm{Q_0}
}
\leq
\f12 .
\]
Let \(k\in\mathbb N_0\).
Then there exist a dimensional constant $C$ and pairwise disjoint cubes $Q_1,\ldots,Q_N \subset Q_0$ such that $\sle(Q_i)=2^{-k}\sle(Q_0)$ and
\[
\f1{2^{n+2}}
\leq
\f{
\lm{Q_i \cap E}
}{
\lm{Q_i}
}
\leq
\f34
\]
for every \(i=1,\ldots,N\)
and
\[
2^{-k}
\lm{Q_0}
\leq
C
\sum_{i=1}^N
\lm{Q_i}
.
\]
\end{lemma}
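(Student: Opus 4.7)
My plan is to combine the Calder\'on--Zygmund decomposition \cref{C-Z} with the averaging argument of \cref{lem_smallregion}, proceeding by induction on $k$. For the base case $k=0$, take $Q_1=Q_0$; its density lies in $[2^{-(n+1)},1/2]\subset[2^{-(n+2)},3/4]$ and the measure inequality holds trivially with $C=1$.

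For the inductive step, apply \cref{C-Z} to $Q_0$ with $E$ at level $\lambda=1/2$, yielding pairwise disjoint dyadic cubes $\{R_\alpha\}\subset\mathcal D(Q_0)$ with $\lm{R_\alpha\cap E}/\lm{R_\alpha}\in(2^{-(n+1)},1/2]$ whose interiors cover $Q_0\cap E$ up to a null set. Each $R_\alpha$ already has density in the target balanced range $[2^{-(n+2)},3/4]$. Write $s:=2^{-k}\sle(Q_0)$ and split the CZ cubes by size: the \emph{large} ones with $\sle(R_\alpha)\geq s$ and the \emph{small} ones with $\sle(R_\alpha)<s$. For each large $R_\alpha$ at dyadic level $j_\alpha\leq k$ of $Q_0$, apply the inductive hypothesis at level $k-j_\alpha$ (valid because the density of $R_\alpha$ lies in the hypothesis range) to extract cubes of side exactly $s$, balanced, contained in $R_\alpha$, with total volume at least $2^{-(k-j_\alpha)}\lm{R_\alpha}/C$; summing over large cubes yields a contribution of $2^{-k}\lm{Q_0}/C\cdot\sum_{j_\alpha\leq k}2^{-j_\alpha(n-1)}$. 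For each small $R_\alpha$, consider its dyadic ancestor at level $k$ in $Q_0$: by the CZ stopping property the ancestor has density at most $2^{-(n+1)}$, and I include it in the collection precisely when its density is at least $2^{-(n+2)}$, since it is then already balanced. Using $\lm{Q_0\cap E}\geq\lm{Q_0}/2^{n+1}$ together with the fact that too-empty level-$k$ ancestors carry at most $\lm{Q_0}/2^{n+2}$ of $E$-mass, the small-ancestor contribution dominates when the CZ cubes are uniformly deep.

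The simple example $E=[0,1/2]^n\subset[0,1]^n=Q_0$ shows that $R_\alpha=Q_0$ can occur, so the induction as stated is self-referential, and that dyadic subcubes at scale $s$ may all have purely extreme densities while only shifted (non-dyadic) cubes of side $s$ straddling the boundary of $E$ become balanced. To handle this case, I would apply \cref{lem_smallregion} with parameter $a\sim s$ near the boundary of any ``too full'' subregion: the averaging over annuli of size $\sim s$ forces a positive-measure set of positions $x\in Q_0$ where $\lm{A(x)\cap E}/\lm{A(x)}$ is intermediate, and a shifted cube of side $s$ centered on such a boundary stratum has balanced density. Stacking such cubes along the $(n-1)$-dimensional boundary of the concentrated region produces the required $2^{-k}\lm{Q_0}/C$ total volume, matching the isoperimetric scaling exactly.

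The main obstacle is making this combined argument quantitatively tight: breaking the self-referential CZ loop when $Q_0$ itself is the only CZ cube, matching dimensional constants between the dyadic-inductive contribution, the small-ancestor contribution, and the shifted-annulus contribution, and ensuring that the case split exhausts all configurations. The asymmetry of the balanced range $[2^{-(n+2)},3/4]$ around $1/2$ is tailored precisely so that the three contributions dovetail with a single dimensional constant $C$.
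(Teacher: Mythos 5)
Your proposal identifies the right phenomenon---that dyadic cubes of side $2^{-k}\sle(Q_0)$ may all be extreme and that one must instead find balanced \emph{shifted} cubes near the boundary of the concentration region---but it does not close the argument, as you yourself acknowledge in the final paragraph. There are two genuine gaps. First, the induction on $k$ via the Calder\'on--Zygmund decomposition is self-referential: $Q_0$ itself satisfies the CZ stopping condition at level $1/2$, so $R_\alpha=Q_0$ can occur (you give the example $E=[0,1/2]^n$), and the inductive step never reduces the problem. You flag this but do not break the loop. Second, and more importantly, you have no quantitative mechanism to show that ``stacking such cubes along the $(n-1)$-dimensional boundary'' yields the lower bound $2^{-k}\lm{Q_0}\leq C\sum_i\lm{Q_i}$. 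Invoking \cref{lem_smallregion} to find positions with intermediate annular density does not by itself count balanced cubes; it is a local statement about a single cube and gives no control on how many disjoint ones exist.

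The paper's proof is a direct argument, with no induction and no CZ decomposition. It partitions $Q_0$ into generation-$k$ dyadic cubes, takes $A$ to be the union of those with $\lm{Q\cap E}\geq 2^{-(n+2)}\lm Q$, and then splits on whether $\lm A\leq\f34\lm{Q_0}$ or not. In the first case the \emph{relative isoperimetric inequality} (\cref{rel.iso.ineq}) applied to $A$ shows $\sm{Q_0\cap\mb A}\gtrsim\lm{Q_0}^{\frac{n-1}{n}}$, and since $\mb A\cap\mathring Q_0$ is covered by faces of generation-$k$ cubes, this forces the existence of $\gtrsim 2^{k(n-1)}$ cubes whose density drops across a face; for each such cube the intermediate value theorem applied to the continuous family $\lambda\mapsto\widetilde Q_\lambda=(1-\lambda)Q+\lambda P$ of side-$2^{-k}\sle(Q_0)$ cubes produces a shifted cube with density exactly $2^{-(n+2)}$. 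In the second case ($\lm A$ large), a simple mass-counting argument already selects enough dyadic cubes with density in $[2^{-(n+2)},3/4]$. The isoperimetric inequality on $A$ is exactly the missing ingredient in your proposal: it is what converts the boundary intuition into the quantitative bound $2^{-k}\lm{Q_0}\leq C\sum_i\lm{Q_i}$, and it sidesteps both the induction and the need for \cref{lem_smallregion} (which in the paper is used later, in the proof of \cref{lem:cla_weakcharf}, not here).
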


\begin{proof}

Recall that \(\mathcal D_k(Q_0)\) is the set of dyadic subcubes of \(Q_0\) of generation \(k\).
In particular, \(\mathcal D_k(Q_0)\) consists of \(2^{nk}\) many pairwise disjoint cubes with side length \(2^{-k}\sle(Q_0)\) which decompose \(Q_0\).
Denote by $\mathcal{Q}$ the collection of those dyadic subcubes \(Q\in\mathcal D_k(Q_0)\) with
\[
\lm{Q\cap E}
\geq
\f1{2^{n+2}}
\lm Q
\]
and let
$A = \bigcup_{Q\in \mathcal{Q}} Q$.
We have
\begin{align*}
\lm{Q_0\cap E\setminus A}
=
\sum_{Q\in \mathcal D_k(Q_0)\setminus \mathcal{Q}} 
\lm{Q \cap E}
\leq
\f1{2^{n+2}}
\sum_{Q\in \mathcal D_k(Q_0)\setminus \mathcal{Q}}
\lm{Q}
\leq
\f1{2^{n+2}}
\lm{Q_0} ,
\end{align*}
and thus
\begin{equation}
\label{est:A_lowerbound}
\begin{split}
\lm{A}
&\geq
\lm{A \cap E}
=
\lm{Q_0  \cap E}
-
\lm{Q_0\cap E \setminus A \cap E}
\\
&\geq
\f1{2^{n+1}} \lm{Q_0} 
- 
\f1{2^{n+2}} \lm{Q_0}
=
\f1{2^{n+1}}
\lm{Q_0} .
\end{split}
\end{equation}
Denote
\[
\mathcal{A} 
= 
\bigl\{
Q \in \mathcal{Q}: \sm{\mathring{Q}_0 \cap \mb{A} \cap \partial Q } \geq\sle(Q)^{n-1}
\bigr\} 
,
\]
where \(\mathring{Q}_0\) is the interior of \(Q_0\),
so \(\mathcal A\) is the set of those cubes in $\mathcal{Q}$ 
that have at least one of their faces contained in $\mathring{Q}_0\cap\mb{A}$.
Note that $\mathring{Q}_0 \cap \mb{A} \subset \bigcup_{Q\in\mathcal{A}} \partial Q $.
For every cube $Q\in\mathcal{A}$, there exists a neighbouring dyadic cube 
$P\in\mathcal D_k(Q_0)\setminus \mathcal{Q}$.
Thus, 
the cube 
\(\widetilde Q_\lambda=(1-\lambda)Q+\lambda P\)
with side length \(2^{-k}\sle(Q_0)\)
is contained in \(Q\cup P\) for every \(0\leq\lambda\leq1\).
By the definition of $\mathcal{Q}$ and $\mathcal D_k(Q_0)\setminus \mathcal{Q}$, there exists \(0\leq\lambda\leq1\) such that
\[
\lm{\widetilde Q_\lambda\cap E}
=
\f1{2^{n+2}}
\lm{\widetilde Q_\lambda}
.
\]
We denote \(\widetilde Q=\widetilde Q_\lambda\) for this \(\lambda\).
The collection 
$\{\widetilde{Q}:Q\in\mathcal{A}\}$
is not necessarily disjoint. 
Observe that every cube in \(\mathcal D_k\) has $2n$ faces and thus at most \(2n\) neighbouring cubes in \(\mathcal D_k\).
Hence, for every \(x\in Q_0\) there are at most \(2n\) many cubes $\widetilde{Q}$ with $x\in\widetilde{Q}$.
Let $\lvert\mathcal{A}\rvert$ denote the number of cubes in $\mathcal{A}$.
Thus, we may extract a maximal disjoint subcollection 
$\widetilde{\mathcal{A}} \subset 
\{\widetilde{Q}:Q\in\mathcal{A}\}$
such that
$\lvert\mathcal A\rvert \leq 2n \lvert \widetilde{\mathcal A}\rvert$.

If $\lm{A} \leq \f34\lm{Q_0}$, then by  
\cref{est:A_lowerbound,rel.iso.ineq}, we have
\begin{align*}
\biggl(
\f
{\lm{Q_0}}
{2^{n+1}}
\biggr)^\f{n-1}{n}
&=
\min\biggl\{
\f
{\lm{Q_0}}
{2^{n+1}}
,
\f
{\lm{Q_0}}
4
\biggr\}^\f{n-1}{n}
\\
&\leq
\min\bigl\{
\lm{Q_0 \cap A},
\lm{Q_0 \setminus A}
\bigr\}^\f{n-1}{n}
\\
&\leq 
C_1
\sm{Q_0 \cap \mb{A} }
\leq
C_1
\sum_{Q\in \mathcal{A}} 
\sm{\partial Q}
\\
&= 
C_1
\lvert 
\mathcal A 
\rvert
2^{-k(n-1)}\sm{\partial Q_0}
\\
&\leq 
C_1
4n^2
\lvert 
\widetilde{\mathcal A} 
\rvert
2^{-k(n-1)}\sle(Q_0)^{n-1}
\\
&=
\f{C_1 4n^2 2^{k}}{\sle(Q_0)}
\sum_{Q\in\widetilde{\mathcal A}}
\lm Q
,
\end{align*}
where $C_1$ is the constant in \cref{rel.iso.ineq}.
Thus, it holds that
\[
2^{-k} \lm{Q_0}
\leq
C
\sum_{Q\in\widetilde{\mathcal A}}
\lm Q ,
\]
where $C = 2^{n+2} n^2 C_1$.
Hence, the cubes 
$\{Q_1,\ldots,Q_N\}=\widetilde{A}$
satisfy the conclusion of the \lcnamecref{lem_onescale}.

It remains to consider the case $\lm{A} > \f34\lm{Q_0}$.
We define
\[
\{Q_1,\ldots,Q_N\}
=
\biggl\{
Q\in\mathcal Q:
\lm{Q\cap E} \leq \f{3}{4} \lm{Q}
\biggr\}
=
\biggl\{
Q\in\mathcal D_k:
\f1{2^{n+2}}
\leq
\f
{\lm{Q\cap E}}
{\lm{Q}}
\leq
\f{3}{4}
\biggr\} .
\]
Then we have
\begin{align*}
\sum_{
Q \in \mathcal Q
\setminus
\{Q_1,\ldots,Q_N\}
}
\lm{Q}
\leq
\f{4}{3}
\sum_{
Q \in \mathcal Q
\setminus
\{Q_1,\ldots,Q_N\}
}
\lm{Q \cap E}
\leq
\f{4}{3}
\lm{Q_0 \cap E}
\leq
\f{2}{3}
\lm{Q_0} .
\end{align*}
We conclude that
\begin{align*}
2^{-k}\lm{Q_0}
&\leq
\lm{Q_0}
=
12
\biggl(
\f{3}{4}
-
\f{2}{3}
\biggr)
\lm{Q_0}
\\
&\leq
12 \sum_{Q \in \mathcal{Q}}
\lm{Q}
-
12 \sum_{Q \in \mathcal{Q}\setminus\{Q_1,\ldots,Q_N\}}
\lm{Q}
\\
&=
12 \sum_{i=1}^N
\lm{Q_i} .
\end{align*}
This completes the proof.
\end{proof}

We are ready to prove the following rougher version of the isoperimetric inequality.
Observe that the difference compared to the relative isoperimetric inequality (\cref{rel.iso.ineq}) 
is that the right-hand side in \cref{lem:cla_weakcharf} measures the area around the boundary by annuli of certain size.

\begin{lemma}
\label{lem:cla_weakcharf}
Let $Q \subset \mathbb R^n$ be a cube, \(E\subset \mathbb{R}^n\) a measurable set, \(k\in\mathbb N\) and \(s\geq0\) such that
\[
\frac{1}{2^{(k+s)n}}
\leq
\f
{\lm{Q\cap E}}
{\lm Q}
\leq
\f12
.
\]
Then there exists a dimensional constant 
$C$ 
such that
\[
\biggl(
\f
{\lm{Q\cap E}}
{\lm Q}
\biggr)^{\f{n-1}n}
\leq C
2^{k+s}
\avint_{Q}
\avint_{Q\cap B(x,2^{-k}\sle(Q))\setminus B(x,2^{-k-1}\sle(Q))}
|\ind E(x)-\ind E(y)|
\intd y
\intd x
.
\]
\end{lemma}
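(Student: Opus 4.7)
Set $r = 2^{-k}\sle(Q)$ and $c_n = \sqrt\pi/(2^{n+4}n)$. My plan is to combine a Calder\'on--Zygmund step with \cref{lem_onescale,lem_smallregion}, at a scale matched to the annulus $A(x) = Q\cap B(x,r)\setminus B(x,r/2)$ appearing in the statement.

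If $\lm{Q\cap E}/\lm Q\geq 1/2^{n+1}$, I would set $\{R_j\}=\{Q\}$; otherwise I apply \cref{C-Z} at level $\lambda=1/2$ to $E$ in $Q$ to obtain pairwise disjoint dyadic subcubes $\{R_j\}\subset\mathcal D(Q)$ that cover $Q\cap E$ up to a null set and satisfy $\lm{R_j\cap E}/\lm{R_j}\in(2^{-n-1},1/2]$. For every $R_j$ with $\sle(R_j)>c_n r$, I apply \cref{lem_onescale} on $R_j$ at generation $k_j=\lceil\log_2(\sle(R_j)/(c_n r))\rceil\geq 1$ to get cubes $\{S_{ji}\}\subset R_j$ of side $2^{-k_j}\sle(R_j)\leq c_n r$ and density $\lm{S_{ji}\cap E}/\lm{S_{ji}}\in[2^{-n-2},3/4]$; for the remaining small $R_j$ I simply set $\{S_{ji}\}=\{R_j\}$. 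In every case $\sle(S_{ji})\leq c_n r$.

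To each $S_{ji}$ I apply \cref{lem_smallregion} with $Q_0=Q$, $a=r$, $\varepsilon=2^{-n-2}$. The constraint $a\leq \sle(Q)/2$ reduces to $k\geq 1$, and the size condition is arranged above. Because $\ind_E\in\{0,1\}$, one has $|\ind_E(x)-\avint_{A(x)}\ind_E|=\avint_{A(x)}|\ind_E(x)-\ind_E(y)|\intd y$, so the lemma yields
\[
\lm{S_{ji}}\leq C_n\int_{S_{ji}}\avint_{A(x)}|\ind_E(x)-\ind_E(y)|\intd y\intd x.
\]
For large $R_j$, I revisit the isoperimetric step inside the proof of \cref{lem_onescale} to extract the sharper intermediate bound $\lm{R_j}^{(n-1)/n}\leq C_n(2^{k_j}/\sle(R_j))\sum_i\lm{S_{ji}}$. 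Combining with the previous display and $2^{k_j}/\sle(R_j)\leq (2/c_n)\cdot 2^k/\sle(Q)$ gives
\[
\lm{R_j\cap E}^{(n-1)/n}\leq\lm{R_j}^{(n-1)/n}\leq \f{C_n 2^k}{\sle(Q)}\int_{R_j}\avint_{A(x)}|\ind_E(x)-\ind_E(y)|\intd y\intd x.
\]
Summing over $j$ by concavity of $t\mapsto t^{(n-1)/n}$ and dividing by $\sle(Q)^{n-1}=\lm Q^{(n-1)/n}$ produces the desired inequality (with $2^k$, which suffices because $s\geq 0$).

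The hard part is the ``small'' regime $\sle(R_j)\leq c_n r$, where $k_j=0$ and \cref{lem_onescale} provides no isoperimetric gain: only the volume bound $\lm{R_j}\leq C_n\int_{R_j}\avint_{A(x)}\cdots$ from \cref{lem_smallregion} is immediately available, and the passage $\lm{R_j}^{(n-1)/n}\leq\lm{R_j}/\sle(R_j)$ introduces a harmful factor $1/\sle(R_j)$ upon summation. Here the density hypothesis $\lm{Q\cap E}/\lm Q\geq 2^{-(k+s)n}$ must be used aggregatively: it prevents the small $R_j$'s from being simultaneously too numerous and too fine relative to the scale $2^{-(k+s)}\sle(Q)$ without forcing the fractional integrand to grow correspondingly, and that is precisely the slack the factor $2^{k+s}$ (rather than $2^k$) in the statement absorbs.
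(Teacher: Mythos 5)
Your decomposition (C--Z at level $\tfrac12$, then \cref{lem_onescale} at scale $\approx c_n r$, then \cref{lem_smallregion}) is the same strategy the paper uses, and your treatment of the ``large'' cubes $R_j$ is sound. But the ``small'' regime $\sle(R_j)\le c_n r$ is a genuine gap, not just an unspelled-out detail, and the subadditivity framework you chose cannot be made to close it. After subadditivity you are forced to estimate $\lm{R_j\cap E}^{(n-1)/n}$ cube by cube, and the passage $\lm{R_j\cap E}^{(n-1)/n}\le \lm{R_j}^{(n-1)/n}=\lm{R_j}\,\lm{R_j}^{-1/n}$ carries a factor $\lm{R_j}^{-1/n}=\sle(R_j)^{-1}$ that is \emph{unbounded from above} precisely when $R_j$ is small. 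The density hypothesis $\lm{Q\cap E}/\lm Q\ge 2^{-(k+s)n}$ bounds the \emph{global} quantity $\lm{Q\cap E}^{-1/n}$ by $2^{k+s}/\sle(Q)$, but says nothing about $\lm{R_j}^{-1/n}$ for an individual small $R_j$, so there is no ``aggregative'' bookkeeping that rescues the per-cube bound after you have already summed $t\mapsto t^{(n-1)/n}$.

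The paper sidesteps exactly this by never going through subadditivity. After normalizing $\sle(Q)=1$, it writes
\[
\lm{Q\cap E}^{\frac{n-1}n}
=\lm{Q\cap E}^{-\frac1n}\lm{Q\cap E}
\le \lm{Q\cap E}^{-\frac1n}\sum_i\lm{Q_i}
=\min\bigl\{\lm{Q\cap E}^{-\frac1n},\,2^{k+s}\bigr\}\sum_i\lm{Q_i},
\]
the last equality being the density hypothesis, and then pushes the minimum inside the sum using $\lm{Q\cap E}^{-1/n}\le\lm{Q_i\cap E}^{-1/n}$. For a small $Q_i$ the $2^{k+s}$ branch of the min is active and the contribution $2^{k+s}\lm{Q_i}$ is then bounded directly by \cref{lem_smallregion}; for a large $Q_i$ the $\lm{Q_i\cap E}^{-1/n}$ branch is active and gives exactly the $2^{-k-K}\lm{Q_i}^{(n-1)/n}$ scaling that \cref{lem_onescale} produces. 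This is the device you need: work with the fixed global prefactor $\lm{Q\cap E}^{-1/n}$ applied to the sum $\sum_i\lm{Q_i}$, rather than with the per-cube quantities $\lm{R_j\cap E}^{(n-1)/n}$.
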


\begin{proof}

Both sides of the claim are invariant under the dilation of \(Q\) and \(E\) by the same factor.
Hence, it suffices to consider the case $\sle(Q)=1$.

By the assumption of the \lcnamecref{lem:cla_weakcharf}, we may apply \cref{C-Z} for $E$ on $Q$ at level $\frac12$.
Thus, we obtain a collection \(\{Q_i\}_i\) of Calder\'{o}n--Zygmund cubes such that
$Q\cap E \subset \bigcup_i Q_i$ up to a set of Lebesgue measure zero and
\[
\frac{1}{2^{n+1}} < \frac{\lm{Q_i\cap E}}{\lm{Q_i}} \leq \frac{1}{2} 
\]
for every $i\in\mathbb{N}$.
Note that 
$\sle(Q_i)=2^{-M_i}$
for some $M_i\in\mathbb{N}_0$.
Denote by \(K\in\mathbb{N}\) the smallest integer with 
\(2^K\geq\f{2^{n+4}n}{\sqrt\pi}\).
We apply \cref{lem_onescale}
with $\max\{k+K-M_i, 0\}$ for $E$ on each $Q_i$.
Then for every $i\in\mathbb{N}$
we obtain a collection 
$\{Q_{i,1},\ldots,Q_{i,N_i}\}$
of pairwise disjoint subcubes 
with
\[
\sle(Q_{i,j})=2^{-\max\{k+K-M_i, 0\}} \sle(Q_i) =\min\{2^{-k-K},\sle(Q_i)\}
\]
such that
\[
\frac{1}{2^{n+2}}
\leq
\f{
\lm{Q_{i,j} \cap E}
}{
\lm{Q_{i,j}}
}
\leq
\f34
\]
for every \(j=1,\ldots,N_i\)
and
\[
\min\bigl\{
2^{-k-K}\lm{Q_i}^{\f{n-1}n},\lm{Q_i}
\bigr\}
=
2^{-\max\{k+K-M_i, 0\}}
\lm{Q_i}
\leq
C_1
\sum_{j=1}^{N_i}
\lm{Q_{i,j}}
,
\]
where 
$C_1$ is the constant in \cref{lem_onescale}.
By the properties of $Q_i$, the assumption $2^{-k-c} \leq \lm{Q\cap E}^\f1n$ 
and the previous inequality, we get
\begin{align*}
\lm{Q\cap E}^{\f{n-1}n}
&\leq
\lm{Q \cap E}^{-\frac{1}{n}}
\sum_{i}
\lm{Q_i}
\\
&=
\min\bigl\{
\lm{Q \cap E}^{-\frac{1}{n}},2^{k+s}
\bigr\}
\sum_{i}
\lm{Q_i}
\\
&\leq
\sum_{i}
\min\bigl\{
\lm{Q_i \cap E}^{-\frac{1}{n}},2^{k+s}
\bigr\}
\lm{Q_i}
\\
&\leq
\sum_{i}
\min\bigl\{
2^{1+\frac{1}{n}}
\lm{Q_i}^{-\frac{1}{n}}
,2^{k+s}
\bigr\}
\lm{Q_i}
\\
&=
2^{k+K}
\sum_{i}
\min\bigl\{
2^{1+\frac{1}{n}-k-K}
\lm{Q_i}^{\frac{n-1}{n}}
,2^{s-K} \lm{Q_i}
\bigr\}
\\
&\leq
2^{k+s+K+1+\frac{1}{n}}
\sum_{i}
\min\bigl\{
2^{-k-K} 
\lm{Q_i}^{\frac{n-1}{n}}
,\lm{Q_i}
\bigr\}
\\
&\leq
2^{k+s+K+1+\frac{1}{n}}
C_1
\sum_{i,j}
\lm{Q_{i,j}}
.
\end{align*}
Using \cref{lem_smallregion} with $\varepsilon=1/2^{n+2}$, 
we obtain
\[
\lm{Q_{i,j}}
\leq
2^{n+4}
\int_{Q_{i,j}}
\biggl|
\ind E(x)
- 
\f{
\lm{A(x) \cap E}
}{
\lm{A(x)}
}
\biggr|
\intd x
\]
for every $i,j\in\mathbb{N}$,
where 
$A(x) = Q \cap B(x,2^{-k})\setminus B(x,2^{-k-1})$.
Thus, we may estimate
\begin{align*}
\sum_{i,j}\lm{Q_{i,j}}
&\leq
2^{n+4}
\sum_{i,j}
\int_{Q_{i,j}}
\biggl|
\ind E(x)
- 
\f{
\lm{A(x) \cap E}
}{
\lm{A(x)}
}
\biggr|
\intd x 
\\
&=
2^{n+4}
\sum_{i,j}
\int_{Q_{i,j}}
\biggl|
\ind E(x)- \avint_{A(x)} \ind E(y) \intd y
\biggr|
\intd x
\\
&\leq
2^{n+4}
\sum_{i,j}
\int_{Q_{i,j}}
\avint_{A(x)}
|\ind E(x)-\ind E(y)|
\intd y
\intd x
\\
&\leq
2^{n+4}
\int_{Q}
\avint_{A(x)}
|\ind E(x)-\ind E(y)|
\intd y
\intd x .
\end{align*}
Combining the obtained estimates,
we conclude that
\[
\lm{Q\cap E}^\frac{n-1}{n}
\leq
C
2^{k+s}
\int_{Q}
\avint_{Q\cap B(x,2^{-k})\setminus B(x,2^{-k-1})}
|\ind E(x)-\ind E(y)|
\intd y
\intd x ,
\]
where $C=\f{n}{\sqrt\pi} 2^{2n+11}C_1$.
This completes the proof.
\end{proof}

\begin{figure}
\includegraphics{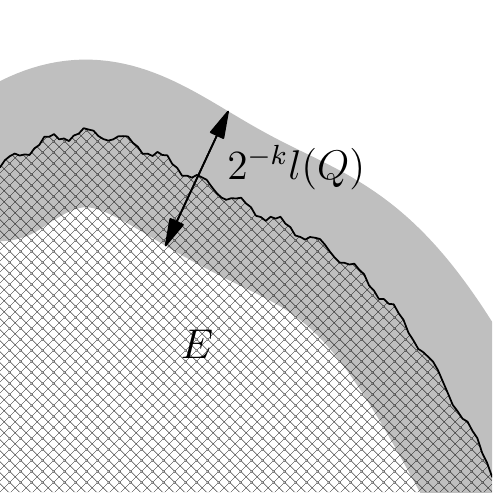}
\caption{For a very regular set the inner integral in \cref{lem:cla_weakcharf} behaves approximately like the characteristic function of a neighborhood of the boundary.}
\label{fig_fractisoperimetric}
\end{figure}

\begin{remark}
\label{rem:cla_weakcharf_rhs}
As mentioned in the introduction, the fractional Poincar\'e inequality \cref{intro:fracpoin} is an improvement of the classical Poincar\'e inequality \cref{intro:claspoin} in the sense that the fractional integral of \(f\) on right hand side of \cref{intro:fracpoin} can be bounded by the integral of the gradient on the right hand side of \cref{intro:claspoin}.
Likewise, \cref{lem:cla_weakcharf} is an improvement of the relative isoperimetric inequality (\cref{rel.iso.ineq}), as
\begin{equation}
\label{eq:cla_weakcharf_rhs}
2^k
\avint_{Q}
\avint_{Q\cap B(x,2^{-k}\sle(Q))\setminus B(x,2^{-k-1}\sle(Q))}
|\ind E(x)-\ind E(y)|
\intd y
\intd x
\leq
C
\f{\sm{Q\cap\mb E}}{\lm Q^{\f{n-1}n}}
\end{equation}
for some dimensional constant \(C\).
Here we do not need to assume any bound on \(\lm{Q\cap E}/\lm Q\).
Note that \cref{lem:cla_weakcharf} still holds if we integrate over \(Q\cap B(x,2^{-k}\sle(Q))\) instead of \(Q\cap B(x,2^{-k}\sle(Q))\setminus B(x,2^{-k-1}\sle(Q))\),
and so does \cref{eq:cla_weakcharf_rhs}.

Define the following averaged out version of the inner integral by
\[
f(z)
=
\f1{\lm{B(0,2^{-k}\sle(Q))}}
\int_{Q\cap B(z,2^{-k}\sle(Q))}
\avint_{Q\cap B(x,2^{-k}\sle(Q))\setminus B(x,2^{-k-1}\sle(Q))}
|\ind E(x)-\ind E(y)|
\intd y
\intd x
.
\]
Then
\[
\int_{\mathbb{R}^n} f(z) \intd z
=
\int_{Q}
\avint_{Q\cap B(x,2^{-k}\sle(Q))\setminus B(x,2^{-k-1}\sle(Q))}
|\ind E(x)-\ind E(y)|
\intd y
\intd x
.
\]
If the boundary of \(E\) is very regular then \(f\) behaves roughly like the characteristic function of the \(2^{-k}\sle(Q)\)-neighborhood of \(Q\cap\mb E\) as shown in \cref{fig_fractisoperimetric},
and its integral evaluates to approximately \(2^{-k}\sle(Q)\sm{Q\cap\mb E}\).
This means the two sides in \cref{eq:cla_weakcharf_rhs} are comparable and \cref{lem:cla_weakcharf} becomes the classical relative isoperimetric inequality.
If the boundary of \(E\) is rougher then \(f\) instead resembles the characteristic function of the neighborhood of a straightened out boundary of \(E\).
Morally this means that \cref{eq:cla_weakcharf_rhs} represents the removal of small wiggles in the boundary of \(E\)
and \cref{lem:cla_weakcharf} holds by the relative isoperimetric inequality since we can replace \(E\) on the left hand side by a straightened out set with similar volume.

For a formal proof of \cref{eq:cla_weakcharf_rhs} define
\[
E_i
=
\{z\in \mathbb{R}^n: f(z)\geq 2^{-i}\}
\setminus\bigcup_{z\in E_1\cup\ldots\cup E_{i-1}}B(z,2^{-k+2}\sle(Q))
\]
recursively for all \(i\in\mathbb{N}\).
By the Vitali covering theorem, for each \(i\) there exists a collection \(\B_i\) of pairwise disjoint balls \(B(z,2^{-k+1}\sle(Q))\) with \(z\in E_i\) such that
\[
\bigcup_{z\in E_i}B(z,2^{-k+2}\sle(Q))
\subset
\bigcup_{B\in \B_i}8B
.
\]
Furthermore, by the definition of \(E_j\) for any \(j>i\) the balls in \(\B_j\) do not intersect the balls in \(\B_i\).
For any \(z\in E_i\) we have
\begin{align*}
2^{-i}
\leq
f(z)
&\leq
\f{2^{n+1}}{\lm{B(0,2^{-k}\sle(Q))}^2}
\int_{Q\cap B(z,2^{-k+1}\sle(Q))}
\int_{Q\cap B(z,2^{-k+1}\sle(Q))}
|\ind E(x)-\ind E(y)|
\intd y
\intd x
\\
&=
\f{2^{n+2}}{\lm{B(0,2^{-k}\sle(Q))}^2}
\lm{Q\cap B(z,2^{-k+1}\sle(Q))\cap E}
\lm{Q\cap B(z,2^{-k+1}\sle(Q))\setminus E}
\\
&\leq
\f{2^{3n+2}
\min\bigl\{
\lm{Q\cap B(z,2^{-k+1}\sle(Q))\cap E}
,
\lm{Q\cap B(z,2^{-k+1}\sle(Q))\setminus E}
\bigr\}
}{
\lm{B(0,2^{-k+1}\sle(Q))}
}
,
\end{align*}
and thus by the relative isoperimetric inequality (\cref{rel.iso.ineq}) for \(Q\cap B(z,2^{-k}\sle(Q))\) 
there exists a constant $C_1$ such that
\begin{align*}
&
2^{-i}
\lm{B(z,2^{-k+1}\sle(Q))}
\\
&\qquad\leq2^{3n+2}
\min\bigl\{
\lm{
Q\cap B(z,2^{-k+1}\sle(Q))\cap E
}
,
\lm{
Q\cap B(z,2^{-k+1}\sle(Q))\setminus E
}
\bigr\}
\\
&\qquad\leq 2^{3n+3} \sigma_n^{\f1n}
2^{-k}\sle(Q)
\min\bigl\{
\lm{
Q\cap B(z,2^{-k+1}\sle(Q))\cap E
}
,
\lm{
Q\cap B(z,2^{-k+1}\sle(Q))\setminus E
}
\bigr\}^{\f{n-1}n}
\\
&\qquad\leq 2^{3n+3} \sigma_n^{\f1n} C_1
2^{-k}\sle(Q)
\sm{
Q\cap B(z,2^{-k+1}\sle(Q))\cap\mb E
}
.
\end{align*}
We can conclude that
\begin{align*}
\int_{\mathbb{R}^n} f(z)\intd z
&\leq
\sum_{i=1}^{\infty}
2^{-i+1}
\lmb{\bigcup_{z\in E_i}B(z,2^{-k+2}\sle(Q))}
\\
&\leq8^n
\sum_{i=1}^{\infty}
2^{-i+1}
\sum_{B\in\B_i}\lm B
\\
&\leq 2^{6n+4} \sigma_n^{\f1n} C_1
2^{-k}\sle(Q)
\sum_{i=1}^{\infty}
\sum_{B\in\B_i}
\sm{Q\cap B(z,2^{-k+1}\sle(Q))\cap\mb E}
\\
&\leq 2^{6n+4} \sigma_n^{\f1n} C_1
2^{-k}\sle(Q)
\sm{Q\cap\mb E}
,
\end{align*}
finishing the proof of \cref{eq:cla_weakcharf_rhs}.
\end{remark}

\section{Weighted fractional $(q,1)$-Poincar\'e inequality} 

In this section, we prove our main result \cref{thm:wfracpoincare},
the weighted fractional Poincar\'e inequality in the case $p=1$.
This improves Theorem~2.10 in \cite{hurri2022}.
Observe that by choosing $\mu = \mathcal{L}$, we obtain the non-weighted fractional Poincar\'e inequality.
Recall that \(\Md_{\alpha,Q}\mu\) is the local fractional dyadic maximal function.
Since it is pointwise bounded by the fractional maximal function, \cref{thm:wfracpoincare} also holds with \(\M_{\alpha}\mu\) in place of \(\Md_{\alpha,Q}\mu\).

\begin{theorem}
\label{thm:wfracpoincare}
Let $0\leq\delta<1$, $1\leq q \leq \frac{n}{n-\delta}$, $\alpha = n-q(n-\delta)$,
$f \in L^1_{\loc}(\mathbb R^n)$ and let
$\mu$ be a Radon measure with $\mu\ll\mathcal{L}$.
Then there exists a dimensional constant $C$ such that
\[
\biggl( \int_Q \lvert f-f_Q \rvert^q \intd\mu \biggr)^\frac{1}{q}
\leq
C
(1-\delta)
\int_Q
\int_Q
\f{|f(x)-f(y)|}{|x-y|^{n+\delta}}
\intd y
\, (\Md_{\alpha,Q}\mu(x))^\frac{1}{q}
\intd x
\]
for every cube $Q \subset \mathbb R^n$. 

Alternatively, we can assume that $\mu$ is a general Radon measure and the claim holds for any continuous function $f$. 
\end{theorem}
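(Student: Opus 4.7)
The plan is to reduce the inequality to a fractional Sobolev–isoperimetric statement on level sets of $f - m_f(Q)$, which I then derive from \cref{lem:cla_weakcharf} by a weighted summation over dyadic scales. The key preliminary claim is that for any measurable $E \subset \mathbb{R}^n$ with $\lm{Q \cap E}/\lm Q \leq 1/2$,
\[
\lm{Q \cap E}^{(n-\delta)/n} \leq C (1-\delta) \int_Q \int_Q \f{|\ind E(x) - \ind E(y)|}{|x-y|^{n+\delta}} \intd y \intd x.
\]
To prove it, I let $k_0$ be the smallest positive integer with $2^{-k_0 n} \leq \lm{Q \cap E}/\lm Q$. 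For each $k \geq k_0$, \cref{lem:cla_weakcharf} with $s = 0$ combined with $|x-y| \leq 2^{-k}\sle(Q)$ on the annulus $A_k(x)$ and the lower bound $\lm{A_k(x)} \gtrsim (2^{-k}\sle(Q))^n$ (as used in the proof of \cref{lem:cla_weakcharf}) yields $(\lm{Q \cap E}/\lm Q)^{(n-1)/n} \lesssim 2^{k(1-\delta)}\sle(Q)^\delta \lm Q^{-1} \int_Q \int_{A_k(x)} |\ind E(x) - \ind E(y)|/|x-y|^{n+\delta} \intd y \intd x$. Multiplying by $2^{-k(1-\delta)}$ and summing over $k \geq k_0$: the geometric sum on the left is at least $2^{-k_0(1-\delta)}/[(1-\delta)\ln 2]$ by $1 - 2^{-u} \leq u \ln 2$, while on the right the disjoint annuli $A_k \subset Q$ combine into at most the full integral over $Q \times Q$. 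Inserting $2^{-k_0(1-\delta)} \gtrsim (\lm{Q\cap E}/\lm Q)^{(1-\delta)/n}$ and simplifying the powers of $\sle(Q)$ yields the claim.

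Turning to the theorem itself, let $m = m_f(Q)$ be the Lebesgue median and split $f - m = g - h$ with $g = (f - m)^+$ and $h = (f - m)^-$; the median property gives $\lm{\{g > t\} \cap Q}, \lm{\{h > t\} \cap Q} \leq \lm Q / 2$ for every $t > 0$. For each such $t$, I apply \cref{C-Z} to $\{g > t\}$ in $Q$ at level $1/2$, producing pairwise disjoint dyadic cubes $Q_i^t \in \mathcal D(Q)$ with $\{g > t\} \cap Q \subset \bigcup_i Q_i^t$ (literally when $f$ is continuous, otherwise up to measure zero using $\mu \ll \mathcal{L}$) and $\lm{Q_i^t \cap \{g > t\}} \gtrsim \lm{Q_i^t}$. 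The definition of $\Md_{\alpha, Q}\mu$ together with $\alpha = n - q(n-\delta)$ gives $\mu(Q_i^t)^{1/q} \leq \sle(Q_i^t)^{n - \delta} (\essinf_{Q_i^t} \Md_{\alpha, Q}\mu)^{1/q}$, and the preliminary fractional isoperimetric claim above, applied on $Q_i^t$, gives $\sle(Q_i^t)^{n - \delta} \leq C(1-\delta) \int_{Q_i^t} \int_{Q_i^t} |\ind{\{g > t\}}(x) - \ind{\{g > t\}}(y)|/|x-y|^{n+\delta} \intd y \intd x$. Using $(\sum a_i)^{1/q} \leq \sum a_i^{1/q}$ for $q \geq 1$ and summing over the disjoint $Q_i^t \subset Q$ bounds $\mu(\{g > t\} \cap Q)^{1/q}$ by $C(1-\delta) \int_Q \int_Q |\ind{\{g > t\}}(x) - \ind{\{g > t\}}(y)|/|x-y|^{n+\delta} (\Md_{\alpha, Q}\mu(x))^{1/q} \intd y \intd x$.

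Integrating this over $t > 0$, applying the layer-cake identity $\int_0^\infty |\ind{\{g > t\}}(x) - \ind{\{g > t\}}(y)| \intd t = |g(x) - g(y)| \leq |f(x) - f(y)|$, and using Minkowski's inequality $(\int_Q g^q \intd\mu)^{1/q} \leq \int_0^\infty \mu(\{g > t\} \cap Q)^{1/q} \intd t$ yields the theorem's bound for $g$, and similarly for $h$, hence for $\|f - m\|_{L^q(\mu)}$. The remaining discrepancy $|f_Q - m| \mu(Q)^{1/q}$ I handle by bootstrap: specializing the argument just completed to $\mu = \mathcal{L}$ and $q = 1$ (where $\Md_{\delta, Q}\mathcal{L}(x) = \sle(Q)^\delta$) gives $|f_Q - m| \leq \avint_Q |f - m| \intd x \leq C(1-\delta) \sle(Q)^\delta \lm Q^{-1} \int_Q \int_Q |f(x) - f(y)|/|x - y|^{n + \delta} \intd y \intd x$, and multiplication by $\mu(Q)^{1/q} \leq \sle(Q)^{n - \delta} (\essinf_Q \Md_{\alpha, Q}\mu)^{1/q}$ cancels all Lebesgue-measure factors to produce the required bound.

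The main obstacle is the first step. \cref{lem:cla_weakcharf} is intrinsically a single-scale statement: for any fixed scale $2^{-k}\sle(Q)$ with $k \geq k_0$, it controls the annulus-average of $|\ind E(x) - \ind E(y)|$ at that one scale. The sharp $(1-\delta)$ factor in the theorem can only emerge by combining these single-scale bounds across all finer scales with geometric weights $2^{-k(1-\delta)}$; the resulting sum blows up as $1/(1-\delta)$ near $\delta = 1$, producing exactly the cancellation that turns the single-scale isoperimetric inequality into the full fractional Sobolev estimate with the correct $(1-\delta)$ prefactor.
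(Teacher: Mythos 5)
Your proof is correct and rests on the same machinery as the paper's: the fractional isoperimetric inequality (\cref{lem:cla_weakcharf}), the Calder\'on--Zygmund decomposition at level $1/2$, the geometric summation over scales with weights $2^{-k(1-\delta)}$ producing the $1/(1-\delta)$ gain, and a layer-cake reduction to level sets. The structural differences are two. First, you isolate the scale-summation into a self-contained fractional Sobolev inequality for characteristic functions, $\lm{Q\cap E}^{(n-\delta)/n}\lesssim(1-\delta)\int_Q\int_Q|\ind E(x)-\ind E(y)|\,|x-y|^{-n-\delta}\,dy\,dx$ for $\lm{Q\cap E}\leq\lm Q/2$, which you then apply uniformly on $Q$ and on each CZ cube; the paper interleaves the two summations, applying \cref{lem:cla_weakcharf} directly with $s=1$ on each CZ cube. (Your step from the preliminary claim to $\sle(Q_i^t)^{n-\delta}$ uses the CZ lower density bound, so this is merely a repackaging of the $s=1$ trick.) Second, and more substantively, you center at the median $m=m_f(Q)$, split $f-m=(f-m)^+-(f-m)^-$, obtain the bound for $\|f-m\|_{L^q(\mu)}$, and then dispatch the remaining term $|f_Q-m|\,\mu(Q)^{1/q}$ by a bootstrap: invoke your own estimate with $\mu=\mathcal L$ and $q=1$ to bound $\avint_Q|f-m|$, and multiply by $\mu(Q)^{1/q}\leq\sle(Q)^{n-\delta}(\essinf_Q\Md_{\alpha,Q}\mu)^{1/q}$. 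The paper instead keeps everything centered at $f_Q$ and, on the problematic range $f_Q<\lambda<m_f$ where the level set is large, uses the identity $\int_{f_Q}^\infty\lm{Q\cap\{f>\lambda\}}\,d\lambda=\int_{-\infty}^{f_Q}\lm{Q\cap\{f<\lambda\}}\,d\lambda$ to transfer to sublevel sets of measure at most $\lm Q/2$ and apply \cref{lem:cla_weakcharf} directly on $Q$. Your median-plus-bootstrap route is a genuine alternative to the paper's swap identity; both yield a purely dimensional constant and both handle the two alternative hypotheses on $\mu$ and $f$ in the same way (openness of the level sets when $f$ is continuous, otherwise $\mu\ll\mathcal L$).
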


Note that the conditions on the parameters in \cref{thm:wfracpoincare} can also be written as \(0\leq\alpha\leq\delta<1\), \(q=\f{n-\alpha}{n-\delta}\).

\begin{remark}
The proof of the theorem given next is for the Lorentz norm, namely the $L^q$ norm in the left of the claim of the theorem can be replaced by the $\|\cdot\|_{L^{q,1}(\mu)}$ norm, namely 
$$
||f-f_Q||_{L^{q,1}(\mu)}
=q  \int_0^\infty \mu(\Omega_\lambda)^\frac{1}{q} \intd\lambda,
$$
where  $\Omega_\lambda = \{x\in Q: \lvert f-f_Q \rvert > \lambda \}$.
 
\end{remark}

\begin{proof}[Proof of \cref{thm:wfracpoincare}]
Fix $Q \subset \mathbb R^n$ and denote $\Omega_\lambda = \{x\in Q: \lvert f-f_Q \rvert > \lambda \}$.
It holds that
\[
\lambda^{q-1} \mu(\Omega_\lambda)^\frac{q-1}{q} \leq \biggl( \int_0^\lambda \mu(\Omega_t)^\frac{1}{q} \intd t \biggr)^{q-1} ,
\]
since $\Omega_\lambda \subset \Omega_t$ for $0<t\leq \lambda$.
By Cavalieri's principle, this implies
\begin{equation}
\label{eq_cavalieriq}
\begin{split}
\biggl( \int_Q \lvert f-f_Q \rvert^q \intd\mu \biggr)^\frac{1}{q} &= \biggl( q \int_0^\infty \lambda^{q-1} \mu(Q \cap \{ \lvert f-f_Q \rvert > \lambda \} ) \intd\lambda  \biggr)^\frac{1}{q} \\
&= \biggl( q \int_0^\infty \lambda^{q-1} \mu(\Omega_\lambda)^\frac{q-1}{q} \mu(\Omega_\lambda)^\frac{1}{q} \intd\lambda  \biggr)^\frac{1}{q} \\
&\leq \Biggl( q \int_0^\infty \biggl( \int_0^\lambda \mu(\Omega_t)^\frac{1}{q} \intd t \biggr)^{q-1} \mu(\Omega_\lambda)^\frac{1}{q} \intd\lambda  \Biggr)^\frac{1}{q} \\
&\leq q^\frac{1}{q} \biggl( \int_0^\infty \mu(\Omega_t)^\frac{1}{q} \intd t \biggr)^\frac{q-1}{q} \biggl(  \int_0^\infty \mu(\Omega_\lambda)^\frac{1}{q} \intd\lambda  \biggr)^\frac{1}{q} \\
&\leq 2 \int_0^\infty \mu(\Omega_\lambda)^\frac{1}{q} \intd\lambda
\\
&=
2\int_{-\infty}^{f_Q} \mu(Q \cap \{f < \lambda \})^\frac{1}{q} \intd\lambda
+
2\int_{f_Q}^\infty \mu(Q \cap \{f > \lambda \})^\frac{1}{q} \intd\lambda
.
\end{split}
\end{equation}
The previous two terms swap when replacing \(f\) by \(-f\).
Thus it suffices to bound the second term.
We split it into two parts
\begin{equation}
\label{split-fractional}
\begin{split}
\int_{f_Q}^\infty \mu(Q \cap \{f > \lambda \})^\frac{1}{q}\intd\lambda
&=
\int_{f_Q}^{\max\{m_f,f_Q\}}
\mu(Q \cap \{f > \lambda \})^\frac{1}{q}
\intd\lambda \\
& \qquad
+
\int_{\max\{m_f,f_Q\}}^\infty
\mu(Q \cap \{f > \lambda \})^\frac{1}{q} \intd\lambda .
\end{split}
\end{equation}
We abbreviate the maximal median of $f$ over $Q$ by \(m_f=m_f(Q)\).

For the first term in~\cref{split-fractional} it suffices to consider \(f_Q<m_f\).
Recall that \( \lm{Q\cap\{f>\lambda\}} \geq \lm Q/2 \) for $\lambda<m_f$.
By the definition of \(f_Q\), it holds that
\[
\int_{f_Q}^\infty
\lm{Q\cap\{f>\lambda\}}
\intd\lambda
=
\int_{-\infty}^{f_Q}
\lm{Q\cap\{f<\lambda\}}
\intd\lambda .
\]
Using these facts, we get
\begin{align*}
\int_{f_Q}^{\max\{m_f,f_Q\}}
\mu(Q \cap\{f>\lambda\})^\frac{1}{q}
\intd\lambda
&
\leq
(m_f-f_Q)
\mu(Q)^\frac{1}{q}
\\
&\leq2
\frac{
\mu(Q)^\frac{1}{q}
}{
\lm{Q}
}
\int_{f_Q}^{m_f}
\lm{Q\cap\{f>\lambda\}}
\intd\lambda
\\
&\leq2
\frac{
\mu(Q)^\frac{1}{q}
}{
\lm{Q}
}
\int_{f_Q}^{\infty}
\lm{Q\cap\{f>\lambda\}}
\intd\lambda
\\
&=2
\frac{
\mu(Q)^\frac{1}{q}
}{
\lm{Q}
}
\int_{-\infty}^{f_Q}
\lm{Q\cap\{f<\lambda\}}
\intd\lambda 
\\
&\leq
2^\frac{n-\delta}{n}
\frac{
\mu(Q)^\frac{1}{q}
}{
\lm{Q}^\frac{n-\delta}{n}
}
\int_{-\infty}^{m_f}
\lm{Q\cap\{f<\lambda\}}^{\frac{n-\delta}n}
\intd\lambda ,
\end{align*}
where in the last inequality we used 
\(\lm{Q\cap\{f<\lambda\}}\leq\lm Q/2\) for \(\lambda<m_f\).
Denote 
\[
A_k(x) = Q \cap B(x,2^{-k}\sle(Q))\setminus B(x,2^{-k-1}\sle(Q))
\]
for $k\in\mathbb{N}$ 
and
\[
K_\lambda
= 
\Bigl\lceil
\log_2\bigl(
\sle(Q)/\lm{Q\cap\{f<\lambda\}}^\f1n
\bigr)
\Bigr\rceil
\]
for $\lambda<m_f$.
Then we have
\[
\frac{1}{2^{kn}} \leq 
\frac{\lm{Q\cap\{f<\lambda\}}}{\lm{Q}} \leq \frac{1}{2}
\]
for every $k\geq K_\lambda$, $\lambda<m_f$.
Thus for each $k\geq K_\lambda$, $\lambda<m_f$, we may apply \cref{lem:cla_weakcharf} with $s=0$ for $E = \{f<\lambda\}$ on $Q$ to obtain
\[
\lm{Q\cap\{f<\lambda\}}^{\f{n-1}n}
\leq
C_1
\f
{2^k}
{\sle(Q)}
\int_{Q}
\avint_{A_k(x)}
|\ind{\{f<\lambda\}} (x)-\ind{\{f<\lambda\}} (y)|
\intd y
\intd x
.
\]
We multiply both sides of the previous estimate by $2^{-k(1-\delta)}$ and sum over $k\geq K_\lambda$ to get
\[
\sum_{k= K_\lambda}^\infty
2^{-k(1-\delta)}
\lm{Q\cap\{f<\lambda\}}^{\f{n-1}n}
\leq
C_1
\sum_{k= K_\lambda}^\infty
\f
{2^{k\delta}}
{\sle(Q)}
\int_{Q}
\avint_{A_k(x)}
|\ind{\{f<\lambda\}} (x)-\ind{\{f<\lambda\}} (y)|
\intd y
\intd x
.
\]
Furthermore,
\begin{align*}
\sum_{k= K_\lambda}^\infty
2^{-k(1-\delta)} 
&=
\f{
2^{-K_\lambda(1-\delta)}
}{
1-2^{-(1-\delta)}
}
\geq
\f{2^{-(1-\delta)}}{1-2^{\delta-1}}
\f{
\lm{Q\cap\{f<\lambda\}}^{\f{1-\delta}n} 
}{
\sle(Q)^{1-\delta} 
}
\geq
\f12 \f1{1-\delta}
\f{
\lm{Q\cap\{f<\lambda\}}^{\f{1-\delta}n}
}{
\sle(Q)^{1-\delta}  
}
.
\end{align*}
By combining the two previous estimates with \cref{eq:Avolume2}, we conclude that
\begin{align*}
\lm{Q\cap\{f<\lambda\}}^{\f{n-\delta}n}
&\leq
C_2
(1-\delta)
\sum_{k\in\mathbb{N}}
\f
{2^{k(n+\delta)}}
{\sle(Q)^{n+\delta}}
\int_{Q}
\int_{A_k(x)}
|\ind{\{f<\lambda\}} (x)-\ind{\{f<\lambda\}} (y)|
\intd y
\intd x 
,
\end{align*}
where $C_2 = 2^{n+2} C_1 / \sigma_n$.
It follows that
\begin{equation}
\label{eq_smalllambda}
\begin{split}
&\int_{f_Q}^{\max\{m_f,f_Q\}}
\mu(Q \cap\{f>\lambda\})^\frac{1}{q}
\intd\lambda 
\\
&\qquad\leq 
2^\frac{n-\delta}{n}
\frac{
\mu(Q)^\frac{1}{q}
}{
\lm{Q}^\frac{n-\delta}{n}
}
\int_{-\infty}^{m_f}
\lm{Q\cap\{f<\lambda\}}^{\frac{n-\delta}n}
\intd\lambda
\\
&\qquad\leq
2 C_2 (1-\delta)
\frac{
\mu(Q)^\frac{1}{q}
}{
\lm{Q}^\frac{n-\delta}{n}
}
\int_{-\infty}^{m_f}
\sum_{k\in\mathbb{N}}
\f
{2^{k(n+\delta)}}
{\sle(Q)^{n+\delta}}
\int_{Q}
\int_{A_k(x)}
|\ind{\{f<\lambda\}} (x)-\ind{\{f<\lambda\}} (y)|
\intd y
\intd x
\intd \lambda
\\
&\qquad=
2 C_2 (1-\delta)
\frac{
\mu(Q)^\frac{1}{q}
}{
\lm{Q}^\frac{n-\delta}{n}
}
\sum_{k\in\mathbb{N}}
\f
{2^{k(n+\delta)}}
{\sle(Q)^{n+\delta}}
\int_{Q}
\int_{A_k(x)}
\int_{-\infty}^{m_f}
|\ind{\{f<\lambda\}} (x)-\ind{\{f<\lambda\}} (y)|
\intd \lambda
\intd y
\intd x
\\
&\qquad\leq
2 C_2 (1-\delta)
\frac{
\mu(Q)^\frac{1}{q}
}{
\lm{Q}^\frac{n-\delta}{n}
}
\sum_{k\in\mathbb{N}}
\f
{2^{k(n+\delta)}}
{\sle(Q)^{n+\delta}}
\int_{Q}
\int_{A_k(x)}
|f(x)-f(y)|
\intd y
\intd x
\\
&\qquad\leq
2 C_2 (1-\delta)
\frac{
\mu(Q)^\frac{1}{q}
}{
\lm{Q}^\frac{n-\delta}{n}
}
\sum_{k\in\mathbb{N}}
\int_{Q}
\int_{A_k(x)}
\f{
|f(x)-f(y)|
}{
|x-y|^{n+\delta}
}
\intd y
\intd x
\\
&\qquad\leq
2 C_2 (1-\delta)
\frac{
\mu(Q)^\frac{1}{q}
}{
\lm{Q}^\frac{n-\delta}{n}
}
\int_{Q}
\int_{Q\cap B(x,\sle(Q)/2)}
\f{
|f(x)-f(y)|
}{
|x-y|^{n+\delta}
}
\intd y
\intd x
\\
&\qquad\leq
2 C_2 (1-\delta)
\int_{Q}
\int_{Q}
\f{
|f(x)-f(y)|
}{
|x-y|^{n+\delta}
}
\intd y
\, (\Md_{\alpha,Q}\mu)^\frac{1}{q}
\intd x ,
\end{split}
\end{equation}
where 
in the last inequality we used 
\[
\frac{\mu(Q)}{\lm{Q}^{\frac{n-\delta}{n}q}} \leq \Md_{\alpha,Q}\mu(x)
\]
for every $x\in Q$.

It is left to estimate the second term in~\cref{split-fractional}.
In that case, we have \(\lm{Q\cap\{f>\lambda\}}\leq\lm Q/2\) since $\lambda > m_f$.
We apply \cref{C-Z} for $E=\{f>\lambda\}$ on $Q$ at level $\f12$
to obtain a collection \(\{Q_i\}_i\) of Calder\'{o}n--Zygmund cubes with 
\(\sle(Q_i)=2^{-N_i}\sle(Q)\) for some \(N_i\in\mathbb N_0\) 
such that
$Q\cap\{f>\lambda\} \subset \bigcup_i Q_i$ up to a set of Lebesgue measure zero and
\[
\frac{1}{2^{n+1}} < \frac{\lm{Q_i\cap\{f>\lambda\}}}{\lm{Q_i}} \leq \frac{1}{2} .
\]
Fix $i\in\mathbb{N}$ and let $k\geq N_i+1$.
We apply \cref{lem:cla_weakcharf} with $k-N_i$ instead of \(k\) and \(s=1\) for $E = \{f>\lambda\}$ on $Q_i$.
Observe that \(2^{-(k-N_i)}\sle(Q_i)=2^{-k}\sle(Q)\).
For every $k\geq N_i+1$ we obtain
\begin{align*}
\lm{Q_i}^{\f{n-1}n}
&\leq
2^{(n+1)\frac{n-1}{n}}
\lm{Q_i\cap\{f>\lambda\}}^{\f{n-1}n}
\\
&\leq
2^{n}
C_1
\frac{2^{k}}{\sle(Q)}
\int_{Q_i}
\avint_{Q_i\cap A_k(x)}
|\ind{\{f>\lambda\}} (x)-\ind{\{f>\lambda\}} (y)|
\intd y
\intd x
,
\end{align*}
where 
\(
A_k(x)
=
Q \cap  B(x,2^{-k}\sle(Q))\setminus B(x,2^{-k-1}\sle(Q))
\)
as above.
Multiplying both sides by $2^{-k(1-\delta)}$ and summing over $k\geq N_i+1$, we get
\begin{align*}
\sum_{k\geq N_i+1} 
2^{-k(1-\delta)} \lm{Q_i}^{\f{n-1}n} 
\leq 
2^n C_1
\sum_{k\geq N_i+1} 
\frac{2^{k\delta}}{\sle(Q)}
\int_{Q_i}
\avint_{Q_i\cap A_k(x)}
|\ind{\{f>\lambda\}} (x)-\ind{\{f>\lambda\}} (y)|
\intd y
\intd x
.
\end{align*}
We note that
\begin{align*}
\sum_{k\geq N_i+1}
2^{-k(1-\delta)}
&=
\f
{2^{-(N_i+1)(1-\delta)}}
{1-2^{-(1-\delta)}}
=
\f{2^{-(1-\delta)}}{1-2^{-(1-\delta)}}
\f
{\sle(Q_i)^{1-\delta}}
{\sle(Q)^{1-\delta}}
\geq
\f1{2(1-\delta)}
\f
{\sle(Q_i)^{1-\delta}}
{\sle(Q)^{1-\delta}}
.
\end{align*}
By combining the two previous estimates with \cref{eq:Avolume2}, we conclude that
\begin{align*}
\lm{Q_i}^{\f{n-\delta}n}
&\leq C_3 (1-\delta)
\sum_{k\in\mathbb{N}}
\f
{2^{k(n+\delta)}}
{\sle(Q)^{n+\delta}}
\int_{Q_i}
\int_{A_k(x)}
|\ind{\{f>\lambda\}} (x)-\ind{\{f>\lambda\}} (y)|
\intd y
\intd x
,
\end{align*}
where $C_3 = 2^{2n+2} C_1 / \sigma_n$.
Since
\[
\mu(Q \cap\{f>\lambda\})
\leq
\mu\Bigl(\bigcup_iQ_i \Bigr)
\]
by \cref{C-Z}
and
\[
\frac{\mu(Q_i)}{\lm{Q_i}^{\frac{n-\delta}{n}q}} \leq \Md_{\alpha,Q}\mu(x)
\]
for every $x\in Q_i$,
it follows that
\begin{align*}
&\mu(Q \cap\{f>\lambda\})^\frac{1}{q}
\leq
\sum_i \mu(Q_i)^\frac{1}{q} \\
&\qquad\leq
C_3 (1-\delta)
\sum_i
\frac{\mu(Q_i)^\frac{1}{q}}{\lm{Q_i}^\frac{n-\delta}{n}}
\sum_{k\in\mathbb{N}} 
\f
{2^{k(n+\delta)}}
{\sle(Q)^{n+\delta}}
\int_{Q_i}
\int_{A_k(x)}
|\ind{\{f>\lambda\}} (x)-\ind{\{f>\lambda\}} (y)|
\intd y
\intd x \\
&\qquad\leq
 C_3 (1-\delta)
\sum_{k\in\mathbb{N}} 
\f
{2^{k(n+\delta)}}
{\sle(Q)^{n+\delta}}
\sum_i \int_{Q_i}
\int_{A_k(x)}
|\ind{\{f>\lambda\}} (x)-\ind{\{f>\lambda\}} (y)|
\intd y
\, (\Md_{\alpha,Q}\mu(x))^\frac{1}{q}
\intd x \\
&\qquad\leq
 C_3 (1-\delta)
\sum_{k\in\mathbb{N}} 
\f
{2^{k(n+\delta)}}
{\sle(Q)^{n+\delta}}
\int_{Q}
\int_{A_k(x)}
|\ind{\{f>\lambda\}} (x)-\ind{\{f>\lambda\}} (y)|
\intd y
\, (\Md_{\alpha,Q}\mu(x))^\frac{1}{q}
\intd x .
\end{align*}
Integrating both sides in $\lambda$, we obtain
\begin{equation}
\label{eq_largelambda}
\begin{split}
&\int_{\max\{m_f,f_Q\}}^\infty
\mu(Q \cap \{f>\lambda\})^\frac{1}{q}
\intd\lambda
\\
&\qquad\leq
C_3 (1-\delta)
\sum_{k\in\mathbb{N}}
\f
{2^{k(n+\delta)}}
{\sle(Q)^{n+\delta}}
\int_{Q}
\int_{A_k(x)}
\int_{m_f}^\infty
|\ind{\{f>\lambda\}} (x)-\ind{\{f>\lambda\}} (y)|
\intd \lambda
\intd y
\, (\Md_{\alpha,Q}\mu(x))^\frac{1}{q}
\intd x
\\
&\qquad\leq
 C_3 (1-\delta)
\sum_{k\in\mathbb{N}} 
\f
{2^{k(n+\delta)}}
{\sle(Q)^{n+\delta}}
\int_{Q}
\int_{A_k(x)}
|f(x)-f(y)|
\intd y
\, (\Md_{\alpha,Q}\mu(x))^\frac{1}{q}
\intd x
\\
&\qquad\leq
 C_3 (1-\delta)
\sum_{k\in\mathbb{N}}
\int_{Q}
\int_{A_k(x)}
\f{
|f(x)-f(y)|
}{
|x-y|^{n+\delta}
}
\intd y
\, (\Md_{\alpha,Q}\mu(x))^\frac{1}{q}
\intd x
\\
&\qquad\leq
 C_3 (1-\delta)
\int_{Q}
\int_{Q\cap B(x,\sle(Q)/2)}
\f{
|f(x)-f(y)|
}{
|x-y|^{n+\delta}
}
\intd y
\, (\Md_{\alpha,Q}\mu(x))^\frac{1}{q}
\intd x
\\
&\qquad\leq
 C_3 (1-\delta)
\int_{Q}
\int_{Q}
\f{
|f(x)-f(y)|
}{
|x-y|^{n+\delta}
}
\intd y
\, (\Md_{\alpha,Q}\mu(x))^\frac{1}{q}
\intd x .
\end{split}
\end{equation}
By combining the obtained estimates \cref{eq_cavalieriq,split-fractional,eq_smalllambda,eq_largelambda}, we conclude that
\[
\biggl( \int_Q \lvert f-f_Q \rvert^q \intd\mu \biggr)^\frac{1}{q} \leq C (1-\delta)
\int_{Q}
\int_{Q}
\f{
|f(x)-f(y)|
}{
|x-y|^{n+\delta}
}
\intd y
\, (\Md_{\alpha,Q}\mu(x))^\frac{1}{q}
\intd x,
\]
where $C = 4 (2C_2+C_3) = (2^{2n+4}+2^{n+5}) C_1 /\sigma_n $. 
\end{proof}

For a Radon measure satisfying a polynomial growth condition, the following fractional Poincar\'e inequality holds.
\begin{corollary}
\label{cor:fracpoin_growthcond}
Let $0\leq\delta<1$, $1\leq q \leq \frac{n}{n-\delta}$, $\alpha = n-q(n-\delta)$,
$f \in L^1_{\loc}(\mathbb R^n)$
and 
$\mu$ be a Radon measure.
Assume that there exists a constant $C_\mu$ such that
\[
\mu(Q)\leq
C_\mu 
\sle(Q)^{n-\alpha}
\]
for every cube $Q \subset \mathbb R^n$.
Then there exists a dimensional constant $C$ such that
\[
\biggl( \int_Q \lvert f-f_Q \rvert^q \intd\mu \biggr)^\frac{1}{q}
\leq
C_\mu^\frac{1}{q} C
(1-\delta)
\int_Q
\int_Q
\f{|f(x)-f(y)|}{|x-y|^{n+\delta}}
\intd y
\intd x
\]
for every cube $Q \subset \mathbb R^n$.
\end{corollary}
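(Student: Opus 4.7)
The plan is to reduce directly to \cref{thm:wfracpoincare} and absorb the polynomial growth into a pointwise bound on the local dyadic fractional maximal function. The key elementary observation is that for any cube $P \in \mathcal D(Q)$ containing a point $x \in Q$, the growth hypothesis yields
\[
\sle(P)^\alpha \f{\mu(P)}{\lm P}
\leq
\sle(P)^\alpha \cdot \f{C_\mu \sle(P)^{n-\alpha}}{\sle(P)^n}
=
C_\mu .
\]
Taking the supremum over such dyadic cubes gives $\Md_{\alpha,Q}\mu(x) \leq C_\mu$ for every $x \in Q$. Substituting this bound on the right-hand side of \cref{thm:wfracpoincare} immediately produces the desired inequality with constant $C_\mu^{1/q} C$.

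The only subtlety is that \cref{cor:fracpoin_growthcond} does not assume $\mu \ll \mathcal L$, so the first form of \cref{thm:wfracpoincare} is not directly applicable, and the alternative form requires continuity of $f$. I would handle this via a standard approximation. One clean route is to apply the alternative form of \cref{thm:wfracpoincare} to a mollification $f_\varepsilon = f * \varphi_\varepsilon$ (which is continuous) to obtain
\[
\biggl( \int_Q \lvert f_\varepsilon - (f_\varepsilon)_Q \rvert^q \intd\mu \biggr)^{1/q}
\leq
C_\mu^{1/q} C (1-\delta)
\int_Q \int_Q \f{|f_\varepsilon(x)-f_\varepsilon(y)|}{|x-y|^{n+\delta}} \intd y \intd x ,
\]
and then let $\varepsilon \to 0$. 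On the right-hand side, the double integral is controlled by the corresponding one for $f$ via Minkowski's inequality for integrals (up to extending $f$ harmlessly), and convergence follows by dominated convergence once we know the expression is finite (otherwise the inequality is trivial). On the left-hand side, $f_\varepsilon \to f$ in $L^1_{\loc}$ and hence pointwise $\mu$-almost everywhere along a subsequence, so Fatou's lemma gives the claim.

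Alternatively, one can mollify the measure instead, replacing $\mu$ by $\mu_\varepsilon = \mu * \varphi_\varepsilon$, which is absolutely continuous and satisfies the same polynomial growth on slightly enlarged cubes (up to a dimensional constant), and then send $\varepsilon \to 0$ using weak convergence $\mu_\varepsilon \to \mu$. The main obstacle, such as it is, lies entirely in this approximation bookkeeping; the analytic content of the corollary is simply the uniform bound $\Md_{\alpha,Q}\mu \leq C_\mu$ combined with \cref{thm:wfracpoincare}.
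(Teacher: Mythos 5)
Your main step---bounding $\Md_{\alpha,Q}\mu(x)\leq C_\mu$ pointwise on $Q$ directly from the growth hypothesis and then invoking \cref{thm:wfracpoincare}---is exactly the paper's proof, which stops there without further comment.

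The technical concern you raise is legitimate, but your proposed approximation does not close it. From $f_\varepsilon=f*\varphi_\varepsilon\to f$ in $L^1_{\loc}(\mathcal L)$ one extracts a subsequence converging $\mathcal L$-a.e., which for a singular $\mu$ says nothing about $\mu$-a.e.\ convergence; Fatou on the left-hand side therefore does not go through. For instance with $\mu=\delta_0$ and $f=\ind{\{0\}}$ one has $f_\varepsilon\equiv 0$, so no subsequence converges to $f$ at the origin. Mollifying $\mu$ instead runs into the same wall: weak convergence $\mu_\varepsilon\rightharpoonup\mu$ only tests against continuous integrands, and $\lvert f-f_Q\rvert^q$ need not be continuous. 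The obstruction is structural rather than technical: for singular $\mu$ and $f$ merely in $L^1_{\loc}$, the quantity $\int_Q\lvert f-f_Q\rvert^q\intd\mu$ depends on the Lebesgue-null representative chosen for $f$, so no approximation argument can rescue the literal statement. The corollary must be read as inheriting the dichotomy of \cref{thm:wfracpoincare}---either $\mu\ll\mathcal L$, or $\mu$ arbitrary and $f$ continuous---which is what the paper implicitly does; under that reading your extra approximation step is unnecessary, and outside it the argument fails.
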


\begin{remark}
We remark that this result combined with Theorem \ref{thm:frac-clas} with $p=1$ yields the classical Meyers--Ziemer theorem \cite{MeyersZiemer1977}. 
\end{remark}

\begin{proof}[Proof of \cref{cor:fracpoin_growthcond}]
Fix a cube $Q\subset\mathbb{R}^n$.
By the assumption, we have
\[
\Md_{\alpha,Q}\mu(x) = 
\sup_{\substack{ Q'\ni x, \\ Q'\in\mathcal D(Q) }}
\sle(Q')^\alpha \f{\mu(Q')}{\lm{Q'}}
\leq
C_\mu
\]
for every $x\in Q$.
Thus, by \cref{thm:wfracpoincare}, the claim follows.
\end{proof}

\section{From fractional $(1,1)$-Poincar\'e inequality to fractional $(q,p)$-Poincar\'e inequality with $A_p$ weights}

\label{sec:fracpoincareAp}

In this section, we show that the fractional $(1,1)$-Poincar\'e inequality implies the fractional $(q,p)$-Poincar\'e inequality. 
Moreover, we are able to obtain the result 
with $A_p$ weights as conjectured in~\cite{hurri2022}.

We recall briefly some concepts about the classes of Muckenhoupt   weights. A weight is a function $w\in L^1_{\mathrm{loc}}(\mathbb{R}^n)$  satisfying $w(x)> 0$ for  almost every point  $x\in\mathbb{R}^n$.

\begin{definition}  Let $w$ be a weight.  
\begin{enumerate}[(i),topsep=5pt,itemsep=5pt]
\item 
We say that $w\in A_1$ if there is a constant $C$ such that
\[
\M w(x) \leq C w(x) 
\]
for almost every $x\in \mathbb{R}^n$.
The $A_1$ constant $[w]_{A_1}$ is defined as the smallest $C$
for which the condition above holds.
\item For $1<p<\infty$ we say that $w\in A_p$ if 
\[
[w]_{A_p} = \sup_{Q} \avint_Q w \intd x \biggl(\avint_Q w^{1-p'} \intd x \biggr)^{p-1}<\infty.
\]
\item The $A_{\infty}$ class is defined as the union of all   the    $A_p$ classes, that is,
\[
A_\infty = \bigcup_{1\leq p<\infty} A_p,
\]
and the $A_\infty$ constant is defined as
\[ 
[w]_{A_\infty} = \sup_{Q} \frac{1}{w(Q)} \int_Q \M(\ind{Q} w) \intd x.
\]
\end{enumerate} 
Recall that for $1\leq r\leq p<\infty$ we have
\begin{equation}
\label{eq:Muckenhouptconstants}
c[w]_{A_\infty}\leq [w]_{A_p} \leq [w]_{A_r} \leq [w]_{A_1} 
\end{equation}
for some \(c>0\) depending only on the dimension.
\end{definition}

We observe that the fractional $(1,1)$-Poincar\'e inequality implies the fractional $(1,p)$-Poincar\'e inequality with $A_p$ weights on the right-hand side.
However, note the extra factor $\delta^{\f1p-1}$ that appears in front.

\begin{corollary}
\label{prop:(1_1)-to-(1_p)}
Let $0<\delta<1$, $1\leq p<\infty$, 
$f \in L^1_{\loc}(\mathbb R^n)$ and $w\in A_p$.
Then there exists a dimensional constant $C$ such that
\[
\avint_Q \lvert f-f_Q \rvert \intd x 
\leq
C
[w]^{\frac{1}{p}}_{A_p}
\f{
(1-\delta)^\f1p
}{
\delta^{1-\f1p}
}
\sle(Q)^\delta
\biggl(
\frac{1}{w(Q)}\int_Q \int_Q
\f{|f(x)-f(y)|^p}{|x-y|^{ n+\delta p}}
\intd y
\,w(x)\intd x
\biggr)^\frac{1}{p}
\]
for every cube $Q\subset\mathbb{R}^n$.
\end{corollary}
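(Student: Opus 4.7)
The plan is to combine three ingredients: Theorem~\ref{thm:wfracpoincare} applied with $\mu=\mathcal L$, $q=1$, and a smaller fractional parameter $\tilde\delta\in[0,\delta]$ to be chosen at the end; H\"older's inequality applied to the inner integral in $y$ to raise the exponent from $1$ to $p$, compensating for the mismatch between $\tilde\delta$ and $\delta$; and the standard $A_p$ characterization to insert the weight $w$. Choosing $\tilde\delta$ at the end balances the factor $1-\tilde\delta$ from the $(1,1)$-Poincar\'e inequality against the factor $(\alpha p')^{-1/p'}$ (with $\alpha:=\delta-\tilde\delta$) produced by the H\"older step.

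With $\mu=\mathcal L$ and $q=1$, the local dyadic fractional maximal function $\Md_{\tilde\delta,Q}\mathcal L$ is simply the constant $\sle(Q)^{\tilde\delta}$, so Theorem~\ref{thm:wfracpoincare} gives
\[
\avint_Q |f-f_Q|\intd x
\leq C(1-\tilde\delta)\sle(Q)^{\tilde\delta}\avint_Q\int_Q\f{|f(x)-f(y)|}{|x-y|^{n+\tilde\delta}}\intd y\intd x.
\]
Factoring the kernel as $|x-y|^{-(n+\tilde\delta)}=|x-y|^{-(n+\delta p)/p}\cdot|x-y|^{-(n/p'-\alpha)}$, H\"older's inequality in $y$ together with the polar-coordinate estimate $\int_Q|x-y|^{-n+\alpha p'}\intd y\leq C\sle(Q)^{\alpha p'}/(\alpha p')$ (valid for $\alpha>0$) yields
\[
\int_Q\f{|f(x)-f(y)|}{|x-y|^{n+\tilde\delta}}\intd y
\leq C\sle(Q)^{\alpha}(\alpha p')^{-1/p'}
\biggl(\int_Q\f{|f(x)-f(y)|^p}{|x-y|^{n+\delta p}}\intd y\biggr)^{\!1/p}.
\]
Finally, for any nonnegative function $g$, the $A_p$ condition (a direct consequence of H\"older applied with the weights $w^{1/p},w^{-1/p}$) gives
\[
\avint_Q g\intd x \leq [w]_{A_p}^{1/p}\biggl(\f{1}{w(Q)}\int_Q g^p w\intd x\biggr)^{\!1/p};
\]
applying this with $g(x)=\bigl(\int_Q|f(x)-f(y)|^p|x-y|^{-n-\delta p}\intd y\bigr)^{1/p}$ and using $\tilde\delta+\alpha=\delta$ then proves the inequality with prefactor $(1-\tilde\delta)(\alpha p')^{-1/p'}$ in place of the claimed $(1-\delta)^{1/p}/\delta^{1-1/p}$.

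The main obstacle is to choose $\alpha$ so that $(1-\tilde\delta)(\alpha p')^{-1/p'}$ is controlled by $(1-\delta)^{1/p}/\delta^{1-1/p}$ with a constant uniform in $p$. Minimizing $(1-\delta+\alpha)(\alpha p')^{-1/p'}$ over $\alpha$ gives $\alpha^*=(p-1)(1-\delta)$, and using the identity $(p-1)p'=p$ one finds $(1-\tilde\delta)(\alpha^* p')^{-1/p'}=p^{1/p}(1-\delta)^{1/p}$, which is of the required order. However, $\alpha^*\leq\delta$ only when $\delta\geq 1-1/p$; for smaller $\delta$ one is forced to take $\alpha=\delta$ (so $\tilde\delta=0$), producing the factor $(p')^{-1/p'}\delta^{-1/p'}$. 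In this complementary case $1-\delta\geq 1/p$ gives $(1-\delta)^{1/p}\geq p^{-1/p}$, and since $p^{1/p}$ and $(p')^{-1/p'}$ are uniformly bounded on $[1,\infty)$, both cases collapse to $C(1-\delta)^{1/p}/\delta^{1-1/p}$ with a dimensional constant as claimed.
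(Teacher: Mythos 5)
Your proposal is correct and follows essentially the same route as the paper: apply \cref{thm:wfracpoincare} with $\mu=\mathcal L$ and a reduced fractional parameter $\tilde\delta=\delta-\alpha$, use H\"older's inequality in $y$ to pass from the $L^1$ to the $L^p$ Gagliardo kernel at the cost of a singular-integral factor $\sim(\alpha p')^{-1/p'}\sle(Q)^\alpha$, then insert the weight via the $A_p$ condition. The only difference is cosmetic: the paper simply sets the shift $\varepsilon=\alpha$ to $\min\{\delta,1-\delta\}$, whereas you optimize to $\alpha^*=(p-1)(1-\delta)$ and fall back to $\alpha=\delta$ when $\alpha^*>\delta$; both case analyses produce the same final prefactor $(1-\delta)^{1/p}/\delta^{1-1/p}$ up to a dimensional constant.
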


\begin{proof}
Let $0\leq\varepsilon\leq\delta$.
By \cref{thm:wfracpoincare} with $\mu=\mathcal{L}$, there exists a constant $C_1$ such that
\begin{equation}
\label{StartingPoint}
\avint_Q \lvert f-f_Q \rvert \intd x 
\leq
C_1
(1-\delta+\varepsilon)
\sle(Q)^{\delta-\varepsilon}
\avint_Q
\int_Q
\f{
|f(x)-f(y)|
}{
|x-y|^{n+\delta-\varepsilon}
}
\intd y
\intd x
.
\end{equation}
If $p=1$, the claim of the \lcnamecref{prop:(1_1)-to-(1_p)} follows from \cref{StartingPoint} with \(\varepsilon=0\) combined with the definition of $A_1$ weights.
It remains to consider $p>1$.
Assume \(0<\varepsilon\leq\delta\) and fix \(x\in Q\).
Then by Hölder's inequality we have
\begin{align*}
\int_Q
\f{|f(x)-f(y)|}{|x-y|^{n+\delta-\varepsilon}}
\intd y
&\leq
\biggl(
\int_Q
\f1{|x-y|^{n-\varepsilon p'}}
\intd y
\biggr)^{\f1{p'}}
\biggl(
\int_Q
\f{|f(x)-f(y)|^p}{|x-y|^{n+\delta p}}
\intd y
\biggr)^{\f1p}
\\
&\leq (n^{\f32}\sigma_n)^{\f1{p'}}
\f{
\sle(Q)^\varepsilon
}{
\varepsilon^{\f1{p'}}
}
\biggl(
\int_Q
\f{|f(x)-f(y)|^p}{|x-y|^{n+\delta p}}
\intd y
\biggr)^{\f1p}
.
\end{align*}
We plug this into \cref{StartingPoint} and apply H\"older's inequality once more with the definition of $A_p$ weights to get
\begin{align*}
\avint_Q \lvert f-f_Q \rvert \intd x 
&\leq C_2
\sle(Q)^\delta
\f{
1-\delta+\varepsilon
}{
\varepsilon^{\f1{p'}}
}
\avint_Q
\biggl(
\int_Q
\f{|f(x)-f(y)|^p}{|x-y|^{n+\delta p}}
\intd y
\biggr)^{\f1p}
\f{w(x)^{\f1p}}{w(x)^{\f1p}}
\intd x
\\
&\leq C_2
\sle(Q)^\delta
\f{
1-\delta+\varepsilon
}{
\varepsilon^{\f1{p'}}
}
\biggl(
\avint_Q
\int_Q
\f{|f(x)-f(y)|^p}{|x-y|^{n+\delta p}}
\intd y
\,
w(x)
\intd x
\biggr)^{\frac1p}
\biggl(
\avint_Q 
w(x)^{1-p'} 
\intd x
\biggr)^{\frac{p-1}{p}}
\\
&\leq C_2
\sle(Q)^\delta
\f{
1-\delta+\varepsilon
}{
\varepsilon^{\f1{p'}}
}
[w]^{\frac{1}{p}}_{A_p}
\biggl(
\frac{1}{w(Q)}
\int_Q
\int_Q
\f{|f(x)-f(y)|^p}{|x-y|^{n+\delta p}}
\intd y
\,
w(x)
\intd x
\biggr)^{\frac1p}
\end{align*}
with \(C_2=C_1\max\{1,n^{\f32}\sigma_n\}\leq92 C_1\).
Setting \(\varepsilon=\min\{\delta,1-\delta\}\) finishes the proof.
\end{proof}

We recall the definitions of the weighted $D_{p}(w)$ and $SD_{p}^{s}(w)$ conditions.

\begin{definition} 
\label{def:smallness}
Let $0<p<\infty$, $0<s<\infty$, $w$ be a weight
and $a:\mathcal{Q}\to [0,\infty)$ be a general functional defined over the collection of all cubes in $\mathbb{R}^n$.

\begin{enumerate}[(i),topsep=5pt,itemsep=5pt]
\item 
The functional $a$ belongs to $D_{p}(w)$ if there is a constant $c$ such that
\begin{equation*}
\Biggl( \sum_i a(Q_i)^p \frac{w(Q_i)}{w(Q)}  \Biggr)^{\frac1p} \leq C a(Q)
\end{equation*}
for any family of disjoint dyadic subcubes $\{ Q_i\}_i $ of any given cube $Q\subset\mathbb{R}^n$.
The smallest constant $C$ above is denoted by $\|a\|_{D_{p}(w)}$. 

\item
The functional $a$ belongs to $SD_{p}^{s}(w)$ if there is a constant $C$ such that
\begin{equation*}
\Biggl( \sum_i a(Q_i)^p \frac{w(Q_i)}{w(Q)}  \Biggr)^{\frac1p} \leq C 
\biggl(\frac{\lm{\bigcup_i Q_i}}{\lm{Q}} \biggr)^{ \frac1s }
a(Q)
\end{equation*}
for any family of disjoint dyadic subcubes $\{ Q_i\}_i $ of any given cube $Q\subset\mathbb{R}^n$.
The smallest constant $C$ above is denoted by $\|a\|_{SD_{p}^s(w)}$.
\end{enumerate}
\end{definition}

The following self-improving property from~\cite[Theorem~1.6]{CantoPerez2021}
is relevant for us.

\begin{theorem}
\label{thm:Automejoraweak}
Let $1<p<\infty$, $w\in A_{\infty}$ and $a\in D_{p}(w)$.
Assume that $f \in L^1_{\loc}(\mathbb R^n)$ such that
\begin{equation*}
\avint_{Q} |f - f_Q| \intd x \leq a(Q) 
\end{equation*}
for every cube $Q\subset\mathbb{R}^n$.
Then there exists a dimensional constant $C$ such that
\begin{equation*}
\lVert f-f_Q \rVert_{L^{p,\infty}( Q, \frac{w \intd x}{w(Q)})} \leq  C p  [w]_{A_{\infty}} \|a\|_{D_p(w)} a(Q).  
\end{equation*}
for every cube $Q\subset\mathbb{R}^n$.
\end{theorem}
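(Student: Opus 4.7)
The plan is to prove this self-improvement via a Calderón--Zygmund stopping time argument on principal cubes, with the stopping constant tuned so that the \(A_\infty\) property of \(w\) transfers a geometric Lebesgue decay across generations into a geometric \(w\)-decay, and then to sum up the contributions using the \(D_p(w)\) condition.

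Fix a cube \(Q_0\) and a level \(\lambda>0\); the goal is a bound on \(w(\{x\in Q_0:|f(x)-f_{Q_0}|>\lambda\})\) of the required form. First, construct a family \(\mathcal F\) of principal cubes recursively: put \(Q_0\in\mathcal F\), and given \(Q\in\mathcal F\), let its \(\mathcal F\)-children be the maximal dyadic subcubes \(Q'\subsetneq Q\) with \(\avint_{Q'}|f-f_Q|\,\intd x>Ka(Q)\), where \(K=K([w]_{A_\infty})\gg 1\) is a stopping parameter to be chosen. The hypothesis immediately gives \(\sum_{Q'}|Q'|\leq K^{-1}|Q|\), and maximality gives \(|f_{Q'}-f_Q|\leq 2^n K a(Q)\) for every child. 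For \(x\in Q\in\mathcal F\) whose minimal enclosing cube in \(\mathcal F\) is \(Q\), Lebesgue differentiation yields \(|f(x)-f_Q|\leq Ka(Q)\) almost everywhere, so telescoping along the (unique) ancestor chain of \(x\) in \(\mathcal F\) produces the pointwise estimate
\[
|f(x)-f_{Q_0}|\leq (2^n+1)K\sum_{Q\in\mathcal F,\,Q\ni x}a(Q).
\]

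Next, invoke the \(A_\infty\) property of \(w\) in its quantitative (Fujii--Wilson/reverse H\"older) form: since the \(\mathcal F\)-children of any \(Q\in\mathcal F\) have Lebesgue measure at most \(K^{-1}|Q|\), choosing \(K\) comparable to \([w]_{A_\infty}\) forces the weighted measure of each successive generation to halve, i.e.\ \(w(\bigcup\mathcal F_j)\leq 2^{-j}w(Q_0)\). By the Kolmogorov-type duality characterization of \(L^{p,\infty}\) on a finite measure space (for \(p>1\), with the constant carrying the factor \(p\)), it suffices to estimate, for every measurable \(E\subset Q_0\),
\[
\int_E|f-f_{Q_0}|\,w\intd x\leq (2^n+1)K\sum_{Q\in\mathcal F}a(Q)\,w(Q\cap E).
\]
Apply H\"older's inequality in the form
\[
\sum_{Q\in\mathcal F}a(Q)w(Q\cap E)\leq\biggl(\sum_{Q\in\mathcal F}a(Q)^p w(Q)\biggr)^{\f1p}\biggl(\sum_{Q\in\mathcal F}\f{w(Q\cap E)^{p'}}{w(Q)^{p'-1}}\biggr)^{\f1{p'}}.
\]
Using \(D_p(w)\) generation-by-generation together with the geometric \(w\)-decay absorbs the potentially growing sum into \(\|a\|_{D_p(w)}^p a(Q_0)^p w(Q_0)\); meanwhile the second factor is a Carleson embedding for the sparse family \(\mathcal F\) (whose Carleson constant is comparable to \([w]_{A_\infty}\) by the weighted sparseness \(w(\bigcup\mathcal F_j)\leq 2^{-j}w(Q_0)\)) and contributes \(w(E)^{1/p'}\). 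Combining everything yields the required weak-type bound.

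The main obstacle is obtaining the correct multiplicative constant \(p[w]_{A_\infty}\|a\|_{D_p(w)}\) rather than something worse. The factor of \(p\) comes from the Kolmogorov/duality reformulation of \(L^{p,\infty}\). The factor \([w]_{A_\infty}\) is forced by the size of \(K\) needed to have geometric weighted decay across generations. The delicate point is that a naive iteration of \(D_p(w)\) yields an exponentially growing constant \(\|a\|_{D_p(w)}^{pj}\) at generation \(j\), and the whole scheme works only because this growth is exactly cancelled by the \(A_\infty\)-induced geometric decay of \(w(\bigcup\mathcal F_j)\); verifying this cancellation carefully (it is essentially the statement that \(a\in D_p(w)\) behaves like a ``Carleson sequence'' over \(\mathcal F\) when \(w\in A_\infty\)) is the technical heart of the argument.
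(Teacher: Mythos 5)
The paper does not prove this statement; it is \cref{thm:Automejoraweak}, cited verbatim from Canto--P\'{e}rez~\cite[Theorem~1.6]{CantoPerez2021}. So the comparison can only be against the general shape of that argument, which is indeed a stopping-time/Carleson-embedding scheme in the spirit you describe. Your overall blueprint (principal cubes, telescoping pointwise bound, H\"older plus a Carleson embedding against $w$, with the $A_\infty$ constant and a factor of $p$ appearing in the final bound) is the right one, but several of the mechanisms you invoke to produce the claimed constant are incorrect, and one step as written does not close.

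\emph{The stopping parameter and the weighted decay.} You take $K\sim[w]_{A_\infty}$ and claim $w(\bigcup\mathcal F_j)\le 2^{-j}w(Q_0)$. This is false in general. The sharp reverse H\"older inequality gives, for $E\subset Q$, roughly $w(E)/w(Q)\le 2(|E|/|Q|)^{1/(c_n[w]_{A_\infty})}$; to force $w(E)/w(Q)\le\tfrac12$ with $|E|/|Q|\le K^{-1}$ one needs $K\gtrsim 4^{c_n[w]_{A_\infty}}$, i.e.\ $K$ \emph{exponential} in $[w]_{A_\infty}$, which would feed an exponential factor into the telescoping bound and destroy the linear $[w]_{A_\infty}$ in the conclusion. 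The correct mechanism is different and more elementary: with $K$ a \emph{fixed dimensional constant}, the family $\mathcal F$ is Lebesgue-$\tfrac12$-sparse, and then directly from the Fujii--Wilson definition
\[
\sum_{Q'\in\mathcal F,\,Q'\subseteq Q}w(Q')
\le 2\sum_{Q'}\frac{w(Q')}{|Q'|}\,|E_{Q'}|
\le 2\sum_{Q'}\int_{E_{Q'}}\M(\ind{Q}w)
\le 2\int_Q \M(\ind{Q}w)
\le 2[w]_{A_\infty}\,w(Q),
\]
so $\mathcal F$ is $c[w]_{A_\infty}$-Carleson with respect to $w$. There is no geometric $w$-decay across generations, and none is needed; what you get (and what is used) is that the generations are \emph{summable} in $w$ with total mass $\lesssim[w]_{A_\infty}w(Q_0)$. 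Your own account is also internally inconsistent: if $w(\bigcup\mathcal F_j)\le 2^{-j}w(Q_0)$ really held, the $w$-Carleson constant would be $\le 2$, not $\sim[w]_{A_\infty}$.

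\emph{Where the factor $p$ comes from.} You attribute it to the Kolmogorov duality for $L^{p,\infty}$. But the direction of Kolmogorov that you use, namely $\|g\|_{L^{p,\infty}(\nu)}\le\sup_E\nu(E)^{-1/p'}\int_E|g|\,\intd\nu$, carries no large constant (the constant $p'=p/(p-1)$ appears in the \emph{other} direction). The factor $p$ actually enters through the dyadic Carleson embedding theorem: applied to the $\Lambda$-Carleson family $\mathcal F$ with $s=p'$ and $g=\ind E$ one gets $\sum_{Q\in\mathcal F}w(Q\cap E)^{p'}/w(Q)^{p'-1}\le (p')'^{\,p'}\Lambda\,w(E)=p^{p'}\Lambda\,w(E)$, so its $1/p'$-th power contributes the factor $p$.

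\emph{The first H\"older factor does not close as written.} After the pointwise telescoping $|f-f_{Q_0}|\lesssim K\sum_{Q\in\mathcal F}a(Q)\ind Q$ and the H\"older split, you are left with
$\bigl(\sum_{Q\in\mathcal F}a(Q)^p w(Q)\bigr)^{1/p}$. This quantity is \emph{not} controlled by $\|a\|_{D_p(w)}a(Q_0)w(Q_0)^{1/p}$: applying $D_p(w)$ to each generation $\mathcal F_j$ (a disjoint family of subcubes of $Q_0$) gives $\sum_{Q\in\mathcal F_j}a(Q)^pw(Q)\le\|a\|_{D_p(w)}^pa(Q_0)^pw(Q_0)$ for \emph{every} $j$, so the sum over the whole family can be infinite. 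Note in particular that the ``exponentially growing $\|a\|_{D_p(w)}^{pj}$'' you worry about does not occur -- applying $D_p(w)$ directly to each generation avoids it -- but the constant-size contributions from infinitely many generations do not sum, and geometric $w$-decay is not available to save it. Your proposed resolution (``cancellation'' of the exponential growth by the geometric $w$-decay) is therefore both aimed at the wrong problem and unavailable. Closing this step requires a genuinely different idea (for instance, bounding $w(\{\pi>\lambda\})$ by truncating to the sub-tree of $\mathcal F$ that actually carries level $\lambda$, or via a good-$\lambda$ type iteration), and this is precisely the technical heart of the Canto--P\'{e}rez argument that the present sketch does not supply.
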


For the stronger $SD_{p}^{s}(w)$ condition, we have a better 
self-improvement, see~\cite[Theorem~5.3]{LernerLoristOmbrosi2022}.

\begin{theorem}
\label{thm:PR-strong}

Let $1\leq p<\infty$, $1<s<\infty$, $w$ be a weight and $a\in SD^s_{p}(w)$.
Assume that $f \in L^1_{\loc}(\mathbb R^n)$ such that
\begin{equation*}
\avint_{Q} |f-f_{Q}| \intd x \le a(Q)
\end{equation*}
for every cube $Q\subset\mathbb{R}^n$. Then there exists a dimensional constant $C$ such that
\begin{equation*}
\lVert f-f_Q \rVert_{ L^{p}(Q,\frac{w \intd x}{w(Q)}) } \leq  C s \|a\|_{SD_{p}^s(w)} a(Q)
\end{equation*} 
for every cube $Q\subset\mathbb{R}^n$.

\end{theorem}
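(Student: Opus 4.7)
I would use a stopping-time (principal cube) argument that exploits the smallness exponent $1/s$ in the hypothesis $a \in SD_p^s(w)$. Fix a cube $Q$ and build a family $\mathcal F = \bigcup_{k\geq 0}\mathcal F_k \subset \mathcal D(Q)$ of principal cubes recursively: set $\mathcal F_0 = \{Q\}$ and, for each $F \in \mathcal F_k$, add to $\mathcal F_{k+1}$ the maximal dyadic subcubes $F' \subsetneq F$ with $\avint_{F'}|f-f_F|\intd x > 2 a(F)$. Combining the hypothesis $\avint_R |f-f_R|\intd x \leq a(R)$ with maximality shows each such child satisfies $a(F) < \avint_{F'}|f-f_F|\intd x \leq 2^{n+1} a(F)$, while the Calder\'on--Zygmund weak-type bound built into the definition gives that the children of $F$ fill at most half of $F$. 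Iterating this across generations yields the key geometric decay
\[
\Bigl|\bigcup_{F \in \mathcal F_k} F\Bigr| \leq 2^{-k}|Q|.
\]

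Next I would derive the pointwise telescoping bound $|f(x) - f_Q| \leq C \sum_{F \in \mathcal F,\ F \ni x} a(F)$ for a.e.\ $x \in Q$. If $F$ is the smallest principal cube containing $x$, Lebesgue differentiation applied to the dyadic subcubes of $F$ not selected as principal children gives $|f(x) - f_F| \leq 2 a(F)$, and chaining the doubling averages up the stopping tree from $F$ to $Q$ adds a geometric sum of $a$-values along the ancestors. Raising to the $p$-th power, integrating against $w\intd x/w(Q)$, and applying Minkowski's inequality in the generation index (using disjointness within each $\mathcal F_k$ to pull the $p$-th power inside the sum) reduces the problem to estimating
\[
\sum_{k\geq 0} \biggl(\frac{1}{w(Q)}\sum_{F \in \mathcal F_k} a(F)^p w(F)\biggr)^{1/p}.
\]

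The crucial step is to apply the $SD_p^s(w)$ condition \emph{separately to each generation} $\mathcal F_k$, which is a disjoint family of dyadic subcubes of $Q$ with $\bigl|\bigcup \mathcal F_k\bigr| \leq 2^{-k}|Q|$. This directly yields
\[
\biggl(\frac{1}{w(Q)}\sum_{F \in \mathcal F_k} a(F)^p w(F)\biggr)^{1/p} \leq \|a\|_{SD_p^s(w)}\, 2^{-k/s}\, a(Q),
\]
and summing in $k$ produces the geometric series $\sum_{k \geq 0} 2^{-k/s} = (1-2^{-1/s})^{-1}$, which is bounded by $Cs$ uniformly for $1<s<\infty$. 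Applying $SD_p^s(w)$ generation-by-generation rather than recursively descending the tree (which would accumulate an unwanted power $\|a\|_{SD_p^s(w)}^k$) is essential to obtain both the linear dependence on $\|a\|_{SD_p^s(w)}$ and the linear dependence on $s$.

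\textbf{Main obstacle.} The most delicate point is the pointwise telescoping inequality: I must verify that the constant it produces is genuinely dimensional and independent of $s$, $p$, $w$, and $a$, so that the final $s$-dependence remains linear. A minor additional issue is the a priori finiteness of $\|f-f_Q\|_{L^p(Q,w)}$, which is not guaranteed by $f \in L^1_\loc$ alone; this can be circumvented by truncating $f$ at level $N$, running the argument, and sending $N \to \infty$ by monotone convergence.
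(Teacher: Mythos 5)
The paper does not give a proof of this theorem: it is quoted from \cite[Theorem~5.3]{LernerLoristOmbrosi2022}, and the statement originates in \cite{PerezRela2019}, so there is no in-text proof to compare against. Your argument is correct and reconstructs the standard stopping-time proof. The Calder\'on--Zygmund selection at threshold $2a(F)$ gives both the measure decay $\lm{\bigcup_{F\in\mathcal F_k}F}\leq 2^{-k}\lm{Q}$ (by Chebyshev at each step, since $\avint_F|f-f_F|\le a(F)$) and a dimensional constant in the pointwise telescoping bound $|f(x)-f_Q|\leq 2^{n+1}\sum_{F\in\mathcal F,\,F\ni x}a(F)$ for a.e.\ $x$ (the chain terminates a.e.\ because $\bigcap_k\bigcup\mathcal F_k$ has measure zero). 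The decisive point you correctly identify is to apply the $SD_p^s(w)$ condition once per generation $\mathcal F_k$, which is a disjoint family of dyadic subcubes of the fixed cube $Q$, picking up a single power of $\|a\|_{SD_p^s(w)}$ and a factor $2^{-k/s}$; then $\sum_{k\geq 0}2^{-k/s}=(1-2^{-1/s})^{-1}\leq 2s$ yields the linear dependence on $s$, whereas a recursive descent through the tree would, as you note, accumulate $\|a\|_{SD_p^s(w)}^{k}$ and fail. The a priori finiteness concern raised at the end is not a genuine obstacle: Minkowski's inequality applied to the nonnegative functions $a(F_k(\cdot))\,\ind{\bigcup\mathcal F_k}$ gives a finite upper bound on the $L^p$-norm directly, so no truncation is required.
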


Another important tool that we need is the following fractional truncation method
which can be shown by adapting the proof of~\cite[Theorem~4.1]{DydaLizavetaVahakangas2016}.

\begin{theorem}
\label{t.truncation}
Let $0<\delta<1$, $1\leq p\leq q<\infty$,
$f \in L^1_{\loc}(\mathbb R^n)$ and 
$w$ be a weight.
Then the following conditions are equivalent.
\begin{enumerate}[(i)]
\item 
There is a constant $C_1$ such that
\begin{align*}
\inf_{c\in\mathbb{R}}
\lVert f-c \rVert_{L^{q,\infty}( Q, \frac{w \intd x}{w(Q)})} 
\leq C_1
\biggl(
\frac{1}{w(Q)}
\int_{Q}\int_{Q} \frac{\vert f(x)-f(y)\vert^p}{\lvert x-y\rvert^{n+\delta p}}
\intd y \, 
w(x)
\intd x
\biggr)^\frac{1}{p}
\end{align*}
for every cube $Q\subset\mathbb{R}^n$.
\item
There is a constant $C_2$ such that 
\begin{align*}
\inf_{c\in\mathbb{R}} 
\|f-c\|_{ L^{q}(Q,\frac{w \intd x}{w(Q)}) }
\leq C_2
\biggl(
\frac{1}{w(Q)}
\int_{Q}\int_{Q} \frac{\vert f(x)-f(y)\vert^p}{\lvert x-y\rvert^{n+\delta p}} 
\intd y \, 
w(x)
\intd x \biggr)^\frac{1}{p}
\end{align*}
for every cube $Q\subset\mathbb{R}^n$.
\end{enumerate}
Moreover, in 
the implication from (i) to (ii) the constant $C_2$ is of the form $CC_1$, where $C$ only depends on the dimension, 
and in the implication from (ii) to (i) we have
$C_1=C_2$.
\end{theorem}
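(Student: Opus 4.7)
The direction $(ii)\Rightarrow(i)$ is immediate from $\|g\|_{L^{q,\infty}(\nu)}\leq\|g\|_{L^{q}(\nu)}$ on any probability space $\nu$, which yields $C_1=C_2$. For the converse I plan to adapt the Maz'ya-type truncation argument of~\cite{DydaLizavetaVahakangas2016}. Fix a cube $Q$ and let $c$ be a median of $f$ on $Q$ with respect to the probability measure $w\intd x/w(Q)$, so that both $\{f>c\}\cap Q$ and $\{f<c\}\cap Q$ have normalized $w$-mass at most $1/2$. Since $\|f-c\|_{L^q}\leq\|(f-c)_+\|_{L^q}+\|(f-c)_-\|_{L^q}$, by symmetry (applying the argument also to $-f$) it suffices to bound $g=(f-c)_+$. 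For each $k\in\mathbb Z$ introduce the truncation
\[
f_k(x)=\min\{(g(x)-2^k)_+,\,2^k\},
\]
which satisfies $0\leq f_k\leq 2^k$, equals $2^k$ on $\{g\geq 2^{k+1}\}$, and vanishes on $\{f\leq c\}$, a set of normalized $w$-mass at least $1/2$.

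Apply hypothesis (i) to $f_k$ to obtain a constant $d_k\in\mathbb R$ with
\[
\|f_k-d_k\|_{L^{q,\infty}(Q,\,w\intd x/w(Q))}\leq C_1\,J_k^{1/p},\qquad J_k:=\f1{w(Q)}\int_Q\int_Q\f{|f_k(x)-f_k(y)|^p}{|x-y|^{n+\delta p}}\intd y\,w(x)\intd x.
\]
Since $f_k$ vanishes on a set of normalized $w$-mass $\geq 1/2$, testing the weak norm at height $|d_k|$ gives $|d_k|\leq 2^{1/q}\|f_k-d_k\|_{L^{q,\infty}}$, and the quasi-triangle inequality then yields $\|f_k\|_{L^{q,\infty}(Q,\,w\intd x/w(Q))}\leq C\,C_1\,J_k^{1/p}$ with a dimensional constant $C$. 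Using $\{g\geq 2^{k+1}\}\subset\{f_k\geq 2^k\}$ and raising to the $p$-th power,
\[
2^{kp}\biggl(\f{w(Q\cap\{g\geq 2^{k+1}\})}{w(Q)}\biggr)^{p/q}\leq (C\,C_1)^p\,J_k.
\]

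Summing over $k$ and invoking the pointwise Maz'ya truncation estimate $\sum_k|f_k(x)-f_k(y)|^p\leq C(p)|g(x)-g(y)|^p\leq C(p)|f(x)-f(y)|^p$, verified by splitting according to the dyadic interval containing $\max\{g(x),g(y)\}$, gives $\sum_k J_k\leq C\,M^p$, where $M^p$ denotes the weighted fractional seminorm on the right hand side of (i) or (ii). Writing $b_k=w(Q\cap\{g\geq 2^{k+1}\})/w(Q)$ this reads $\sum_k 2^{kp}b_k^{p/q}\leq (C\,C_1)^p M^p$, hence in particular $\sup_k 2^{kp}b_k^{p/q}\leq (C\,C_1)^p M^p$. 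Since $q/p\geq 1$,
\[
\sum_k 2^{kq}b_k=\sum_k\bigl(2^{kp}b_k^{p/q}\bigr)^{q/p}\leq\bigl(\sup_k 2^{kp}b_k^{p/q}\bigr)^{q/p-1}\sum_k 2^{kp}b_k^{p/q}\leq (C\,C_1)^q M^q.
\]
The distribution function identity $\|g\|_{L^q(Q,\,w\intd x/w(Q))}^q\leq C\sum_k 2^{kq}b_k$ produces (ii) with $C_2=C\,C_1$ for a purely dimensional $C$. The main technical obstacles are the pointwise truncation estimate, which is where the assumption $p\geq 1$ enters, and the median-based centering step used to extract a bound on $\|f_k\|_{L^{q,\infty}}$ from hypothesis (i) despite the free minimizing constant $d_k$.
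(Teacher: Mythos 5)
The paper does not present a proof of this theorem; it only points to the Maz'ya-type truncation argument of Dyda--Ihnatsyeva--V\"ah\"akangas, and your proposal is a correct self-contained reconstruction of that argument: the trivial implication (ii)$\Rightarrow$(i), the median-based centering so the truncations vanish on a set of normalized $w$-mass at least $1/2$, the dyadic truncations, the uniform weak bound on each $\|f_k\|_{L^{q,\infty}}$, the $\ell^{p/q}\hookrightarrow\ell^1$ step using $q/p\ge 1$, and the distribution-function reassembly.

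One point worth stating precisely, since the theorem asserts that $C_2/C_1$ is \emph{dimensional}: your pointwise truncation estimate in fact holds with constant exactly $1$ rather than some $C(p)$. Indeed $f_k$ is nondecreasing in $g$, so the differences $f_k(x)-f_k(y)$ all share the same sign, they telescope to $g(x)-g(y)$, and for $p\ge1$ one has $\sum_k a_k^p\le\bigl(\sum_k a_k\bigr)^p$ for nonnegative $a_k$; hence
\[
\sum_k |f_k(x)-f_k(y)|^p
\le
\Bigl(\sum_k |f_k(x)-f_k(y)|\Bigr)^p
=
|g(x)-g(y)|^p
\le
|f(x)-f(y)|^p.
\]
Had that estimate carried a genuine $p$-dependence, the final constant would too, contradicting the claim in the statement. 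With this observation every remaining constant in your chain ($2^{1/q}$ for the median bound, the $L^{q,\infty}$ quasi-triangle constant, and the dyadic-shell constant from the distribution identity after taking the $q$-th root) is bounded by an absolute number, so $C_2=CC_1$ with $C$ absolute, as required.
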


We are ready to state and prove the main results of this section, which are the fractional $(q,p)$-Poincar\'e inequalities with $A_p$ weights.
These results extend Theorems~2.1~and~2.3 in \cite{hurri2022}.
We emphasize that the factor $(1-\delta)^{\frac1p}$ remains despite the singularity introduced by the weight.

\begin{theorem}\label{selfIMproveBadConstant} 
Let $0<\delta<1$, $1\leq r \leq p < \frac{n}{\delta}$, 
$f \in L^1_{\loc}(\mathbb R^n)$ and
$w \in A_r$.
Let
$q$ be 
defined by
\begin{equation*} 
\frac{1}{p} -\frac{1}{q }=\frac{\delta}{nr}.
\end{equation*}
Then there exists a dimensional constant $C$ such that  
\begin{align*}
&
\inf_{c\in \mathbb{R}}
\biggl(\frac{1}{w(Q)} \int_Q 
|f-c|^{q}
\, w 
\intd x
\biggr)^{\frac{1}{q}} 
\\&\qquad \leq 
C
q
[w]_{A_p}^{ \frac1p } [w]_{A_r}^{ \frac{\delta}{nr} } [w]_{A_{\infty}}
\f{(1-\delta)^{\frac1p} 
}{
\delta^{1-\f1p} 
}
\sle(Q)^{\delta} 
\biggl(\frac{1}{w(Q)} \int_{Q}  \int_{Q}  \frac{|f(x)- f(y)|^p}{|x-y|^{n+\delta p}} \intd y \, 
w(x)
\intd x \biggr)^{\frac1p}
\end{align*}
for every cube $Q\subset\mathbb{R}^n$.
\end{theorem}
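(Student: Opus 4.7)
The strategy is to combine the weighted fractional $(1,p)$-Poincar\'e inequality of \cref{prop:(1_1)-to-(1_p)} with the Canto--P\'erez self-improving principle (\cref{thm:Automejoraweak}) and the fractional truncation method (\cref{t.truncation}). The Sobolev gain from exponent $p$ on the right to exponent $q$ on the left will be produced by verifying a $D_q(w)$ condition on an appropriate functional, with the gap between $q$ and $p$ absorbed precisely by the $A_r$ characteristic of the weight.

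To set things up, denote
\[
F(Q) = \frac{1}{w(Q)} \int_Q \int_Q \frac{|f(x)-f(y)|^p}{|x-y|^{n+\delta p}} \intd y \, w(x) \intd x
\]
and define
\[
a(Q) = C_0 [w]_{A_p}^{1/p} \frac{(1-\delta)^{1/p}}{\delta^{1-1/p}} \sle(Q)^\delta F(Q)^{1/p},
\]
where $C_0$ is the constant from \cref{prop:(1_1)-to-(1_p)}, so that $\avint_Q |f-f_Q| \intd x \leq a(Q)$ for every cube $Q \subset \mathbb{R}^n$.

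The main step is to show that $a \in D_q(w)$ with $\|a\|_{D_q(w)} \leq C [w]_{A_r}^{\delta/(nr)}$. For any family $\{Q_i\}$ of pairwise disjoint dyadic subcubes of $Q$, the standard $A_r$ characterization gives $\lm{Q_i}/\lm{Q} \leq C [w]_{A_r}^{1/r} (w(Q_i)/w(Q))^{1/r}$. Raising this to the power $\delta q/n$ and invoking the Sobolev identity $1+\delta q/(nr) = q/p$, which follows at once from $1/p-1/q=\delta/(nr)$, reduces the verification of the $D_q(w)$ bound to establishing
\[
\sum_i \biggl(\frac{F(Q_i) w(Q_i)}{w(Q)}\biggr)^{q/p} \leq F(Q)^{q/p}.
\]
This is proved by combining the elementary power-mean inequality $\sum_i x_i^{q/p} \leq (\sum_i x_i)^{q/p}$, valid since $q/p \geq 1$, with the disjointness bound $\sum_i F(Q_i) w(Q_i) \leq F(Q) w(Q)$, which is an immediate consequence of the pairwise disjointness of the $Q_i$ and the non-negativity of the integrand. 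Together these yield the claimed bound on $\|a\|_{D_q(w)}$.

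With this in place, \cref{thm:Automejoraweak} applied with $q$ in place of $p$ (noting that $q>1$ is forced by the Sobolev relation since $1/q=1/p-\delta/(nr)<1$) produces the weak-type estimate
\[
\|f-f_Q\|_{L^{q,\infty}(Q, w\intd x/w(Q))} \leq C q [w]_{A_\infty} [w]_{A_r}^{\delta/(nr)} a(Q),
\]
and then \cref{t.truncation} upgrades this weak-type bound to the corresponding strong-type bound on $\inf_{c\in\mathbb{R}} \|f-c\|_{L^q(Q, w\intd x/w(Q))}$, which is exactly the conclusion of the theorem after unpacking $a(Q)$. The main obstacle is spotting the algebraic identity $1+\delta q/(nr)=q/p$: this is precisely the relation that allows the $A_r$ characteristic of the weight to absorb the Sobolev gap between $p$ and $q$, after which the power-mean argument closes cleanly.
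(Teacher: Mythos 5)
Your proposal is correct and follows the same strategy as the paper: define the functional $a$ via \cref{prop:(1_1)-to-(1_p)}, establish $a\in D_q(w)$ with $\|a\|_{D_q(w)}\leq[w]_{A_r}^{\delta/(nr)}$, and then apply \cref{thm:Automejoraweak} followed by \cref{t.truncation}. The one point of departure is that the paper cites the $D_q(w)$ estimate from \cite[Lemma~3.3]{CejasMosqueraPerezRela2021}, whereas you prove it directly and correctly from the $A_r$ measure-ratio inequality $\lm{Q_i}/\lm{Q}\leq[w]_{A_r}^{1/r}\bigl(w(Q_i)/w(Q)\bigr)^{1/r}$, the algebraic identity $1+\delta q/(nr)=q/p$, the superadditivity of $t\mapsto t^{q/p}$ for $q/p\geq1$, and the pairwise disjointness of $Q_i\times Q_i\subset Q\times Q$; this makes your verification self-contained. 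You also rightly note that the Sobolev relation forces $q>1$, which is the hypothesis needed to invoke \cref{thm:Automejoraweak}.
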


\begin{remark}
We remark that we could replace $[w]_{A_p}^{ \frac1p } [w]_{A_r}^{ \frac{\delta}{nr} } [w]_{A_{\infty}}$ by a multiple of 
$[w]_{A_r}^{ \frac1p+\frac{\delta}{nr}+1 }$.
\end{remark}

\begin{proof}[Proof of \cref{selfIMproveBadConstant}]

Denote
\[
a_f(Q)
=
C_1
[w]^{\frac{1}{p}}_{A_p}
\f{
(1-\delta)^\f1p
}{
\delta^{1-\f1p} 
}
\sle(Q)^{\delta} \biggl( \frac{1}{w(Q)} \int_{Q}  \int_{Q} 
\frac{|f(x)- f(y)|^p}{|x-y|^{n+\delta p}}
\intd y \, 
w(x)
\intd x 
\biggr)^{\frac1p}
,
\]
where $C_1$ is the dimensional constant in \cref{prop:(1_1)-to-(1_p)}.
By \cref{prop:(1_1)-to-(1_p)}, it holds that 
\[
\avint_{Q} |f - f_Q| \intd x \leq a_f(Q) .
\]
In addition,
by~\cite[Lemma~3.3]{CejasMosqueraPerezRela2021} (which also holds for $M=1$ corresponding to our case), we have $a_f\in D_{q}(w)$ 
such that
\[
\|a_f\|_{D_{q}(w)} \leq [w]_{A_r}^{ \frac{\delta}{nr} }, 
\]
uniformly in $f$.
Hence, we may apply \cref{thm:Automejoraweak} to obtain
\begin{align*}
& \| f-f_Q\|_{L^{q,\infty} ( Q, \frac{w \intd x}{w(Q)})} 
\leq 
C_2
q
[w]_{A_{\infty}} [w]_{A_r}^{ \frac{\delta}{nr}} a_f(Q)
\\
&\qquad = 
C 
q
[w]_{A_{\infty}} [w]^{\frac{\delta}{nr}}_{A_r} [w]_{A_p}^{ \frac1p }
\f{
(1-\delta)^{\frac1p}  
}{
\delta^{1-\f1p1} 
}
\sle(Q)^{\delta}
\biggl( 
\frac{1}{w(Q)} \int_{Q}  \int_{Q}  \frac{|f(x)- f(y)|^p}{|x-y|^{n+\delta p}} \intd y \,
w(x)
\intd x \biggr)^{\frac1p}
,
\end{align*}
where $C_2$ is the constant in \cref{thm:Automejoraweak}
and $C=C_1 C_2$.
An application of \cref{t.truncation} finishes the proof.
\end{proof}

A better dependency on the $A_p$ constants in front can be attained at the expense of having a smaller borderline exponent.

\begin{theorem} 
\label{selfIMproveGoodConstant}  
Let $0<\delta<1$, $1\leq r \leq p< \frac{n}{\delta}$,
$f \in L^1_{\loc}(\mathbb R^n)$
and $w\in A_r$.
Let $q$ be 
defined by
\begin{equation*}
\frac{1}{p} -\frac{1}{q }=   \frac{\delta}{n} \frac{1}{r+\log [w]_{A_r}}.
\end{equation*}
Then there exists a dimensional constant $C$ such that
\begin{align*}
\inf_{c\in \mathbb{R}}
& \biggl( \frac{1}{w(Q)} 
\int_{ Q } 
|f-c|^{q} 
\,w \intd x
\biggr)^{\frac{1}{q}}  
\\
&\qquad\leq 
C 
\frac{npr}{nr-\delta p}
[w]_{A_p}^{ \frac1p }  [w]_{A_{\infty}}
\f{
(1-\delta)^{\frac1p}
}{
\delta^{1-\f1p} 
}
\sle(Q)^{\delta}
\biggl(   \frac{1}{w(Q)} \int_{Q}  \int_{Q}  \frac{|f(x)- f(y)|^p}{|x-y|^{n+\delta p}} \intd y \, 
w(x)
\intd x\biggr)^{\frac1p}
\end{align*}
for every cube $Q\subset\mathbb{R}^n$.
\end{theorem}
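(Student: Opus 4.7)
The plan is to mirror the proof of \cref{selfIMproveBadConstant} but invoke the stronger self-improvement of \cref{thm:PR-strong} (the $SD_p^s(w)$ version) in place of \cref{thm:Automejoraweak} (the $D_p(w)$ version). First I would define the functional
\[
a_f(Q)
=
C_1
[w]_{A_p}^{\frac{1}{p}}
\f{(1-\delta)^{\frac1p}}{\delta^{1-\f1p}}
\sle(Q)^{\delta}
\biggl(
\frac{1}{w(Q)}
\int_Q\int_Q
\f{|f(x)-f(y)|^p}{|x-y|^{n+\delta p}}
\intd y\,w(x)\intd x
\biggr)^{\frac1p},
\]
where \(C_1\) is the constant in \cref{prop:(1_1)-to-(1_p)}. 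That corollary yields \(\avint_Q|f-f_Q|\intd x\leq a_f(Q)\) uniformly in cubes, so the hypothesis of \cref{thm:PR-strong} is available provided I can locate $a_f$ in some class $SD^s_q(w)$.

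The heart of the argument is to verify $a_f\in SD^s_q(w)$ with $s=\f{npr}{nr-\delta p}$ and constant independent of $f$. Given pairwise disjoint dyadic subcubes $\{Q_i\}$ of a cube $Q$, I would begin with
\[
\sum_i a_f(Q_i)^q\f{w(Q_i)}{w(Q)}
=
\f{C_1^q[w]_{A_p}^{q/p}(1-\delta)^{q/p}}{\delta^{q(1-1/p)}w(Q)}
\sum_i
\sle(Q_i)^{\delta q}
w(Q_i)^{1-\f qp}
\biggl(\int_{Q_i}\!\int_{Q_i}\f{|f(x)-f(y)|^p}{|x-y|^{n+\delta p}}\intd y\,w\intd x\biggr)^{\f qp},
\]
and apply Hölder's inequality with exponents $p/q$ and its conjugate to the sum, using disjointness of the $Q_i$ to aggregate the fractional Gagliardo integrals into one over $Q$. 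What remains is a geometric sum of $\sle(Q_i)^{\delta q}w(Q_i)^{1-q/p}\cdot(\text{conjugate term})$, which I would estimate via the sharp reverse Hölder inequality for $A_r$ weights, giving an exponent of order $1+c/(r+\log[w]_{A_r})$. The precise definition $\f1p-\f1q=\f{\delta}{n(r+\log[w]_{A_r})}$ is exactly what is needed to convert the volume gain $\lm{\bigcup_iQ_i}/\lm Q$ from this reverse Hölder estimate into the $1/s$-power on the right-hand side of the $SD^s_q(w)$ condition, while collapsing the remaining factors into a constant that is $A_p$-independent after we factor out $[w]_{A_p}^{1/p}$.

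Once $a_f\in SD^s_q(w)$ with \(\|a_f\|_{SD^s_q(w)}\) bounded by a dimensional multiple of \(a_f(Q)/a_f(Q)=1\) (absorbing the weight constant gain into the exponent rather than the constant), \cref{thm:PR-strong} gives
\[
\|f-f_Q\|_{L^q(Q,\,w\intd x/w(Q))}
\leq
Cs\,\|a_f\|_{SD^s_q(w)}\,a_f(Q)
=
C\f{npr}{nr-\delta p}[w]_{A_\infty}\,a_f(Q),
\]
after using \(s=\f{npr}{nr-\delta p}\) and absorbing \([w]_{A_\infty}\) from the argument of \cref{thm:PR-strong}. Finally I would pass to the infimum over constants $c$ by applying the fractional truncation method \cref{t.truncation}, which in the direction from (ii) to (i) costs no additional constant and in the direction from (i) to (ii) costs only a dimensional factor.

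The principal obstacle is the verification of the $SD^s_q(w)$ condition with the precise sharp exponent involving $r+\log[w]_{A_r}$. This forces the use of a sharp reverse Hölder inequality (rather than the crude one used to prove $D_q(w)$-membership in \cite[Lemma~3.3]{CejasMosqueraPerezRela2021}) in order to transfer the extra volume factor $(\lm{\bigcup_iQ_i}/\lm Q)^{1/s}$ out of the sum, and balancing the exponents so that the constant in front comes out as $C\,\f{npr}{nr-\delta p}[w]_{A_p}^{1/p}[w]_{A_\infty}$ rather than incurring an additional factor $[w]_{A_r}^{\delta/(nr)}$ as in \cref{selfIMproveBadConstant}.
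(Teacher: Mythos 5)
Your high-level template (use \cref{prop:(1_1)-to-(1_p)} to define $a_f$, locate $a_f$ in a smallness class, invoke a self-improvement theorem, finish with the truncation lemma) matches the paper, but the central mechanism is not the one the paper uses, and I think the plan as written would fail. The paper's proof of \cref{selfIMproveGoodConstant} splits into two cases according to whether $[w]_{A_r}>e^{1/\delta}$ or not, and this split is not a cosmetic choice: it is forced. When $[w]_{A_r}$ is close to $1$ the exponent $q$ defined by $\frac1p-\frac1q=\frac{\delta}{n(r+\log[w]_{A_r})}$ approaches the critical exponent $m$ from the $D_m(w)$ condition (defined by $\frac1p-\frac1m=\frac{\delta}{nr}$), so there is essentially no room for a volume gain, and no $SD^s_q(w)$ bound with $s$ independent of $[w]_{A_r}$ can hold with a dimensional constant. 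The paper handles the large-constant regime via \cite[Lemma~6.2]{CejasMosqueraPerezRela2021}, which gives $a_f\in SD^s_q(w)$ with the \emph{weight-dependent} exponent $s=\frac{nM'}{\delta}$, $M=1+\frac1r\log[w]_{A_r}$; since $\cref{thm:PR-strong}$ produces a factor of $s$, this blows up as $[w]_{A_r}\to 1$, which is exactly why the small-constant regime is handled instead through the $D_m(w)$ condition, \cref{thm:Automejoraweak}, Jensen's inequality from $L^{m,\infty}$ down to $L^{q,\infty}$, and only then \cref{t.truncation}. Your single-case plan with the fixed value $s=\frac{npr}{nr-\delta p}$ has no mechanism to survive the regime $[w]_{A_r}\to 1$.

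There is also a concrete misattribution: the conclusion of \cref{thm:PR-strong} is $\lVert f-f_Q\rVert_{L^p}\leq Cs\lVert a\rVert_{SD^s_p(w)}a(Q)$ with no $[w]_{A_\infty}$ factor; that factor lives in \cref{thm:Automejoraweak}. Your line ``$Cs\lVert a_f\rVert_{SD^s_q(w)}a_f(Q)=C\frac{npr}{nr-\delta p}[w]_{A_\infty}a_f(Q)$, after \dots absorbing $[w]_{A_\infty}$ from the argument of \cref{thm:PR-strong}'' therefore asserts an equality that is inconsistent with your own earlier claim that $\lVert a_f\rVert_{SD^s_q(w)}$ is bounded by a dimensional constant. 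In the paper the $[w]_{A_\infty}$ factor in the theorem's statement really does come from the second case (the $D_m$ branch via \cref{thm:Automejoraweak}), not from $SD^s$. Finally, you sketch a direct verification of the $SD^s_q(w)$ membership via H\"older plus a sharp reverse H\"older inequality; the paper does not do this but instead cites \cite[Lemmas~3.3 and 6.2]{CejasMosqueraPerezRela2021} for the $D_m(w)$ and $SD^s_q(w)$ memberships. If you want to prove those memberships from scratch, that may be possible, but as sketched it is too vague to check — and in any case you would need the exponent $s$ to depend on $[w]_{A_r}$ as in the paper, together with the separate small-$[w]_{A_r}$ case, to obtain the stated constant.
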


\begin{proof}

Denote 
\[
a_f(Q)
=
C_1
[w]^{\frac{1}{p}}_{A_p}
\f{
(1-\delta)^\f1p
}{
\delta^{1-\f1p} 
}
\sle(Q)^{\delta} \biggl( \frac{1}{w(Q)} \int_{Q}  \int_{Q} 
\frac{|f(x)- f(y)|^p}{|x-y|^{n+\delta p}}
\intd y \, 
w(x)
\intd x 
\biggr)^{\frac1p}
,
\]
where $C_1$ is the constant in \cref{prop:(1_1)-to-(1_p)}.
By \cref{prop:(1_1)-to-(1_p)}, it holds that
\[
\avint_{Q} |f - f_Q| \intd x \leq a_f(Q) .
\]
We distinguish between the cases $[w]_{A_r} > e^{\frac{1}{\delta}}$ and the opposite.
Assume first that $[w]_{A_r} > e^{\frac{1}{\delta}}$.
By~\cite[Lemma~6.2]{CejasMosqueraPerezRela2021}, we have $a\in SD_{q}^{s}(w)$ 
with $M=1+ \frac{1}{r}\log[w]_{A_{r}}$ and $s=\frac{nM'}{\delta} > 1$,
such that
\begin{equation*}
\|a_f\|_{SD_{q}^{s}(w)} \leq [w]_{A_r}^{ \frac{ \delta}{nrM} },
\end{equation*}
uniformly in $f$.
Hence, applying \cref{thm:PR-strong} with $q$,
we obtain  
\begin{align*}
\lVert f-f_Q \rVert_{ L^{q}(Q,\frac{w \intd x}{w(Q)}) }
&\leq 
C_2 \frac{nM'}{\delta} 
[w]_{A_r}^{\frac{\delta}{nrM}} a_f(Q) 
\leq  C_2  \frac{nM'}{\delta}  [w]_{A_r}^{\frac{1}{ r+\log [w]_{A_r}  }} a_f(Q) 
\\
&\leq  C_2 \frac{n}{\delta} \frac{r+ \log[w]_{A_r}}{ \log[w]_{A_r}} e^1 a_f(Q)
,
\end{align*}
where $C_2$ is the constant in \cref{thm:PR-strong}.
By the assumption $[w]_{A_r} > e^{\frac{1}{\delta}}$, we have 
\begin{align*}
\lVert f-f_Q \rVert_{ L^{q}(Q,\frac{w \intd x}{w(Q)}) }
&\leq
C (r+ \log[w]_{A_r}) a_f(Q) 
\leq C r [w]_{A_r}  a_f(Q)
,
\end{align*}
where $C = C_1 C_2 n e^1 $.
This gives the claim when $[w]_{A_r} > e^{\frac{1}{\delta}}$. 

Assume now that $[w]_{A_r} \leq  e^{\frac{1}{\delta}}$.  
By~\cite[Lemma~6.2]{CejasMosqueraPerezRela2021}, we have $a_f\in D_{m}(w)$ 
such that
\[
\|a_f\|_{D_{m}(w)} \leq [w]_{A_r}^{ \frac{\delta}{nr}}
\leq
e^\frac{1}{nr}
,
\]
where the exponent $m$ is defined by
$\frac{1}{p} -\frac{1}{ m}=\frac{\delta}{nr}$.
Applying \cref{thm:Automejoraweak}, we get
\begin{align*}
\lVert f-f_Q \rVert_{L^{m,\infty}( Q, \frac{w \intd x}{w(Q)})} 
&\leq 
C_3 
m
[w]_{A_{\infty}} e^{\frac{1}{nr}} a_f(Q)
\leq C 
m
[w]_{A_r} a_f(Q) ,
\end{align*}
where $C_3$ is the constant in \cref{thm:Automejoraweak}, $C_4$ is the constant in \cref{eq:Muckenhouptconstants} and $C=C_3 C_4 e^1$.
Since $q \leq m$,
Jensen's inequality implies
\begin{align*}
\lVert f-f_Q \rVert_{L^{q,\infty}( Q, \frac{w \intd x}{w(Q)})} & \leq
\lVert f-f_Q \rVert_{L^{m,\infty}( Q, \frac{w\intd x}{w(Q)})}
\leq
C m [w]_{A_{r}} a_f(Q).  
\end{align*}
An application of \cref{t.truncation} finishes the proof.
\end{proof}

\section{From weighted fractional to weighted classical Poincar\'e inequality}
\label{sec:weight_to_clas}

This section shows that \cref{thm:wfracpoincare} implies the corresponding weighted classical Poincar\'e inequality \cref{cor:weight-clas-poincare}.
For any \(0<\alpha\leq n\) the Riesz potential $\I_\alpha$ of a Radon measure \(\mu\) is
\[
\I_\alpha \mu(x) 
=  
\int_{\mathbb{R}^n}
\frac{\intd\mu(y)}{|x-y|^{n-\alpha}}
\]
for every $x\in\mathbb{R}^n$.
The following \lcnamecref{lemma:riesz-bound} is an improved version of the well-known result that the Riesz potential is bounded by the maximal function.

\begin{lemma}
\label{lemma:riesz-bound}
Let $Q\subset\mathbb{R}^n$ be a cube, $\mu$ be a Radon measure and $0<\alpha<n$.
Then
\[
\I_\alpha (\ind{Q} \mu)(x)
\leq \frac{2^{n-\alpha}n}{\alpha} 
\mu(Q)^{\frac{\alpha}{n}}(\M\mu(x))^{1-\frac{\alpha}{n}}
\]
for every $x\in Q$.
\end{lemma}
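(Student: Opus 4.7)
The plan is to carry out the standard Hedberg-type splitting of the Riesz potential, but to choose the splitting radius carefully so that the two pieces balance to give exactly the constant \(2^{n-\alpha}n/\alpha\) rather than a weaker constant.

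First I would rewrite the Riesz potential as a radial integral via the distribution formula. Using \(|x-y|^{\alpha-n}=(n-\alpha)\int_{|x-y|}^\infty s^{\alpha-n-1}\intd s\) and Fubini,
\[
\I_\alpha(\ind Q \mu)(x)
=
(n-\alpha)
\int_0^\infty s^{\alpha-n-1}\mu(Q\cap B(x,s))\intd s.
\]
This reduces the task to estimating \(\mu(Q\cap B(x,s))\) in two regimes.

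Next I would fix a parameter \(r>0\) and split the integral at \(s=r\). For \(s\leq r\), the ball \(B(x,s)\) is contained in the cube centered at \(x\) of side length \(2s\), so since that cube contains \(x\) we get \(\mu(Q\cap B(x,s))\leq(2s)^n\M\mu(x)=2^n s^n\M\mu(x)\). Plugging this in and integrating yields
\[
(n-\alpha)\int_0^r s^{\alpha-n-1}\cdot 2^n s^n\M\mu(x)\intd s
=\frac{(n-\alpha)2^n}{\alpha}r^\alpha\M\mu(x).
\]
For \(s>r\) I would use the trivial bound \(\mu(Q\cap B(x,s))\leq\mu(Q)\), which gives
\[
(n-\alpha)\int_r^\infty s^{\alpha-n-1}\mu(Q)\intd s
=r^{\alpha-n}\mu(Q).
\]
Combining,
\[
\I_\alpha(\ind Q \mu)(x)
\leq
\frac{(n-\alpha)2^n}{\alpha}r^\alpha\M\mu(x)+r^{\alpha-n}\mu(Q).
\]

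Finally I would choose \(r=\bigl(\mu(Q)/(2^n\M\mu(x))\bigr)^{1/n}\) so that the two powers of \(\mu(Q)\) and \(\M\mu(x)\) come out to exactly \(\mu(Q)^{\alpha/n}\M\mu(x)^{1-\alpha/n}\) on each piece. A short computation shows the first term becomes \(\frac{(n-\alpha)2^{n-\alpha}}{\alpha}\mu(Q)^{\alpha/n}\M\mu(x)^{1-\alpha/n}\) and the second \(2^{n-\alpha}\mu(Q)^{\alpha/n}\M\mu(x)^{1-\alpha/n}\); their sum factors as
\[
\frac{2^{n-\alpha}}{\alpha}\bigl((n-\alpha)+\alpha\bigr)\mu(Q)^{\alpha/n}\M\mu(x)^{1-\alpha/n}
=\frac{2^{n-\alpha}n}{\alpha}\mu(Q)^{\alpha/n}\M\mu(x)^{1-\alpha/n},
\]
as claimed. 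I would also briefly note the degenerate cases \(\M\mu(x)=0\) (which forces \(\mu(Q)=0\)) and \(\mu(Q)=0\) (trivial), so one can assume both are positive when picking \(r\). There is no real obstacle: the only subtlety is keeping track of the factor \(2^n\) coming from the cube-enclosing-ball comparison and optimizing \(r\) precisely, since a sloppy choice only gives the weaker constant \(2^n n/\alpha+1\).
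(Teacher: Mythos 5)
Your proof is correct and is essentially the same as the paper's: the paper writes the Riesz potential via Cavalieri's principle in the variable $t=|x-y|^{\alpha-n}$, which under the substitution $t=s^{\alpha-n}$ becomes exactly your radial layer-cake integral $(n-\alpha)\int_0^\infty s^{\alpha-n-1}\mu(Q\cap B(x,s))\intd s$. Both arguments then apply the same pair of bounds $\mu(Q\cap B(x,s))\le\min\{\mu(Q),(2s)^n\M\mu(x)\}$ and split at the same threshold $s=\tfrac12(\mu(Q)/\M\mu(x))^{1/n}$, yielding the identical constant.
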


\begin{proof} 
Let $Q\subset\mathbb{R}^n$ be a fixed cube and $x\in Q$.
For $t>0$ let
\(
Q_{x,t}
\)
be the cube with center at $x$ and side lenght $2t^{-\frac{1}{n-\alpha}}$. 
Then using Cavalieri's principle, we obtain
\begin{align*}
\int_{Q}\frac{\intd \mu(y)}{\lvert x-y \rvert^{n-\alpha}}
&= 
\int_0^{\infty} \mu\biggl(\Bigl\{y\in Q: \frac{1}{\lvert x-y \rvert^{n-\alpha}}>t\Bigr\}\biggr) \intd t
\\
&= \int_0^{\infty} \mu\bigl(\bigl\{ y\in Q: \lvert x-y \rvert<t^{-\frac{1}{n-\alpha}} \bigr\}\bigr) \intd t \\
&\leq \int_0^{\infty} \min\biggl\{\mu(Q),
\frac{\mu(Q_{x,t})}{\lm{Q_{x,t}}} \lm{Q_{x,t}}\biggr\} \intd t .\\
&\leq \int_0^{\infty}\min\bigl\{\mu(Q),\M\mu(x) 2^nt^{-\frac{n}{n-\alpha}}\bigr\} \intd t
\\
&=\int_0^{2^{n-\alpha}(\M\mu(x)/\mu(Q))^\frac{n-\alpha}{n}}\mu(Q)
\intd t
+ 2^n \int_{2^{n-\alpha}(\M\mu(x)/\mu(Q))^\frac{n-\alpha}{n}}^\infty \M\mu(x)t^{-\frac{n}{n-\alpha}}
\intd t
\\
&=
\frac{2^{n-\alpha}n}{\alpha}
\mu(Q)^\frac{\alpha}{n}(\M\mu(x))^\frac{n-\alpha}{n}.
\end{align*}
Thus, the claim holds.
\end{proof}

The next \lcnamecref{thm:frac-clas} states that the weighted fractional term can be bounded by the weighted gradient term, but we have the maximal function of the measure on the right hand side.
We are mainly interested in the case \(p=1\). This theorem improves Theorems 2.1 and 2.2 from \cite{hurri2023}.

\begin{theorem}
\label{thm:frac-clas}
Let \(1\leq p<\infty\), \(\f{p-1}p< \delta<1\), $f\in W^{1,p}_\loc(\mathbb{R}^n)$ and $\mu\ll\mathcal L$ be a Radon measure.
Then 
\begin{align*}
\int_Q
\int_Q
\f{|f(x)-f(y)|^p}{|x-y|^{n+\delta p}}
\intd y
\intd \mu(x)
&\leq
\frac{2^{n-(1-\delta)p}n}{(1-\delta)p}
\frac{\mu(Q)^{\frac{(1-\delta)p}{n}}}{1-(1-\delta)p}
\int_Q
|\nabla f|^p
\,
(\M \mu)^{1-\frac{(1-\delta)p}{n}}
\intd x
\\
\intertext{
for every cube $Q\subset\mathbb{R}^n$.
As a direct consequence,
}
\int_Q
\int_Q
\f{|f(x)-f(y)|^p}{|x-y|^{n+\delta p}}
\intd y
\intd \mu(x)
&\leq
\frac{2^{n-(1-\delta)p}n}{(1-\delta)p}
\frac{\sle(Q)^{(1-\delta)p}}{1-(1-\delta)p}
\int_Q
|\nabla f|^p
\,
\M \mu
\intd x 
.
\end{align*}

Alternatively, we can assume that $\mu$ is a general Radon measure and the claim holds for any continuous function $f\in W^{1,p}_\loc(\mathbb{R}^n)$.
\end{theorem}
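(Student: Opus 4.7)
The plan is to reduce the fractional double integral to a Riesz potential of the gradient by integrating along segments, and then invoke \cref{lemma:riesz-bound}. By a standard density argument I may assume $f$ is smooth. For any $x,y\in Q$ the segment from $x$ to $y$ lies in $Q$, so the fundamental theorem of calculus together with Jensen's inequality in $t$ yields
\[
|f(x)-f(y)|^p \leq |x-y|^p \int_0^1 |\nabla f(x+t(y-x))|^p \intd t.
\]

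Dividing by $|x-y|^{n+\delta p}$ and integrating in $y\in Q$, I would swap the $y$- and $t$-integrals and, for fixed $x\in Q$ and $t\in(0,1]$, substitute $z=x+t(y-x)$. Then $|x-y|=|x-z|/t$, $\intd y=t^{-n}\intd z$, and the image of $Q$ is a cube containing $x$ with side length $t\sle(Q)$, hence contained in $Q$. This gives
\[
\int_Q \f{|f(x)-f(y)|^p}{|x-y|^{n+\delta p}} \intd y \leq \int_0^1 t^{-(1-\delta)p}\intd t \int_Q \f{|\nabla f(z)|^p}{|x-z|^{n-(1-\delta)p}} \intd z = \f{\I_{(1-\delta)p}(\ind{Q}|\nabla f|^p)(x)}{1-(1-\delta)p}.
\]
The assumption $\delta>(p-1)/p$ is used here, and only here, to guarantee the integrability of $t\mapsto t^{-(1-\delta)p}$ near $0$; this is the main structural constraint and the only place the lower bound on $\delta$ enters.

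Integrating against $\intd\mu(x)$ and applying Fubini to shift the Riesz kernel onto $\mu$, I obtain
\[
\int_Q \int_Q \f{|f(x)-f(y)|^p}{|x-y|^{n+\delta p}} \intd y \intd\mu(x) \leq \f{1}{1-(1-\delta)p}\int_Q |\nabla f(z)|^p \,\I_{(1-\delta)p}(\ind{Q}\mu)(z) \intd z.
\]
Applying \cref{lemma:riesz-bound} with $\alpha=(1-\delta)p\in(0,1)\subset(0,n)$ gives the first claimed inequality. For the second inequality, I use the pointwise bound $\mu(Q)\leq\sle(Q)^n\M\mu(z)$ for $z\in Q$ (since $Q$ is a competitor in the supremum defining $\M\mu$) to pull $\mu(Q)^{(1-\delta)p/n}$ inside the integral and combine it with $(\M\mu)^{1-(1-\delta)p/n}$, producing $\sle(Q)^{(1-\delta)p}\M\mu$. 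The alternative formulation with a general Radon $\mu$ and continuous $f$ follows from the same chain of pointwise inequalities, since the segment representation, the $t$-substitution, and the Riesz potential bound are insensitive to absolute continuity of $\mu$.
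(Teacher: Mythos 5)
Your proof is correct and follows essentially the same route as the paper: the fundamental theorem of calculus along segments plus the $p$-th power inequality (Jensen/H\"older), Fubini, the substitution $z = x + t(y-x)$ to produce $\I_{(1-\delta)p}(\ind{Q}\mu)$, and then \cref{lemma:riesz-bound} with $\alpha=(1-\delta)p$, using $\mu(Q)\leq \sle(Q)^n\M\mu(z)$ to deduce the second display. One small point: rather than the density reduction to smooth $f$ (which would require a further argument to pass to the limit on the weighted right-hand side when $(\M\mu)^{1-(1-\delta)p/n}$ is unbounded), the paper simply uses that the fundamental-theorem representation holds for $\mathcal{L}\times\mathcal{L}$-a.e.\ pair $(x,y)$ when $f\in W^{1,p}_\loc$, which already suffices since $\mu\ll\mathcal L$.
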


\begin{proof}
The second inequality follows from the first inequality due to the fact that \(\mu(Q)/\sle(Q)^n\leq\M\mu(x)\) for any \(x\in Q\).
It remains to prove the first inequality.
If \(f\) is continuously differentiable then by the Fundamental Theorem of Calculus we have
\[
f(y)-f(x) = \int_0^1 \nabla f(x+t(y-x)) \cdot (y-x) \intd t
\]
for every \((x,y)\in Q\times Q\).
If \(f\) is a Sobolev function then the previous equality still holds
for almost every \((x,y)\in Q\times Q\).
Then by H\"older's inequality, it holds that
\[
|f(x)-f(y)|^p \leq \int_0^1 |\nabla f(x+t(y-x))|^p |x-y|^p \intd t .
\]
Applying this with 
Fubini's theorem
and doing the change of variables $y\mapsto z = x+t(y-x)$,
we get
\begin{align*}
&\int_Q
\int_Q
\f{|f(x)-f(y)|^p}{|x-y|^{n+\delta p}}
\intd y
\intd \mu(x)
\\
&\qquad\leq
\int_Q
\int_0^1 
\int_{Q}
\f{|\nabla f(x+t(y-x))|^p  }{|x-y|^{n-(1-\delta)p}}
\intd y
\intd t
\intd \mu(x)
\\
&\qquad =
\int_Q
\int_0^1 
\int_{(1-t)x+tQ}
\f{|\nabla f(z)|^p  }{|x-z|^{n-(1-\delta)p}}
\frac{t^{n-(1-\delta)p}}{t^n}
\intd z
\intd t
\intd \mu(x)
\\
&\qquad\leq
\int_Q
\int_Q
\f{|\nabla f(z)|^p  }{|x-z|^{n-(1-\delta)p}}
\int_{0}^1 
\f1{
t^{(1-\delta)p}
}
\intd t
\intd z
\intd \mu(x)
\\
&\qquad =
\frac{1}{1-(1-\delta)p}
\int_Q
|\nabla f(z)|^p
\int_Q
\f{ \mu(x) }{|x-z|^{n-(1-\delta)p}}
\intd x
\intd z .
\end{align*}
Here we used $(1-t)x+tQ \subset Q$ for $x\in Q$
and
\((1-\delta)p<1\).
By applying \cref{lemma:riesz-bound}, we obtain
\begin{align*}
\int_Q
\f1{|x-z|^{n-(1-\delta)p}}
\intd \mu(x)
&=
\I_{(1-\delta)p} 
( \ind{Q} \mu )(z) 
\leq 
\frac{2^{n-(1-\delta)p}n}{(1-\delta)p}
\mu(Q)^{\frac{(1-\delta)p}{n}} 
(\M \mu(z))^{1-\frac{(1-\delta)p}{n}}
\end{align*}
for every $z\in Q$.
Hence, we conclude that
\begin{align*}
\int_Q
\int_Q
\f{|f(x)-f(y)|^p}{|x-y|^{n+\delta p}}
\intd y
\intd \mu(x)
&\leq
\frac{2^{n-(1-\delta)p}n}{(1-\delta)p}
\frac{\mu(Q)^{\frac{(1-\delta)p}{n}}}{1-(1-\delta)p}
\int_Q
|\nabla f(z)|^p
\,
(\M \mu(z))^{1-\frac{(1-\delta)p}{n}}
\intd z .
\end{align*}
This completes the proof.
\end{proof}

For $A_1$ weights,
we can replace the maximal function in \cref{thm:frac-clas} by the weight itself.

\begin{corollary}
\label{cor:frac-clas-A1}
Let $\f{p-1}p<\delta<1$, $f\in W^{1,1}_\loc(\mathbb{R}^n)$ and $w\in A_1$. Then 
\begin{align*}
\int_Q
\int_Q
\f{|f(x)-f(y)|^p}{|x-y|^{n+\delta p}}
\intd y
\, 
w(x)
\intd x
&\leq
\frac{2^{n-(1-\delta)p}n}{(1-\delta)p}
\f{w(Q)^{\frac{(1-\delta)p}{n}}}{1-(1-\delta)p}
[w]_{A_1}^{1-\frac{(1-\delta)p}{n}}
\int_Q
|\nabla f|^p
\,
w^{1-\frac{(1-\delta)p}{n}}
\intd x 
\\
\intertext{
for every cube $Q\subset\mathbb{R}^n$.
As a direct consequence,
}
\int_Q
\int_Q
\f{|f(x)-f(y)|^p}{|x-y|^{n+\delta p}}
\intd y
\, 
w(x)
\intd x
&\leq
\frac{2^{n-(1-\delta)p}n}{(1-\delta)p}
\f{\sle(Q)^{(1-\delta)p}}{1-(1-\delta)p}
[w]_{A_1}
\int_Q
|\nabla f|^p
\,
w
\intd x
.
\end{align*}
\end{corollary}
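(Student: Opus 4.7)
The corollary follows in a straightforward manner from \cref{thm:frac-clas} together with the defining property of $A_1$ weights, so the strategy is essentially a direct substitution plus a pointwise bound.

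First I would apply \cref{thm:frac-clas} with the Radon measure $\intd\mu = w\intd x$. This measure is absolutely continuous with respect to $\mathcal{L}$ by the definition of a weight, so the hypothesis of the theorem is satisfied, and we have $\mu(Q)=w(Q)$ as well as $\M\mu=\M w$. Under the standing assumption $\f{p-1}p<\delta<1$ we have $0<(1-\delta)p<1$, so in particular the exponent
\[
1-\f{(1-\delta)p}n
\]
appearing in the theorem lies in $(0,1]$.

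The next step is the key use of the $A_1$ hypothesis: by definition $\M w(x)\leq[w]_{A_1}w(x)$ for almost every $x$, and raising this pointwise inequality to the positive power $1-\f{(1-\delta)p}n$ yields
\[
(\M w(x))^{1-\f{(1-\delta)p}n}\leq[w]_{A_1}^{1-\f{(1-\delta)p}n}\,w(x)^{1-\f{(1-\delta)p}n}.
\]
Inserting this estimate into the first inequality of \cref{thm:frac-clas} immediately gives the first displayed inequality of the corollary.

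For the second displayed inequality there are two equally easy routes. The shortest is to start instead from the second inequality of \cref{thm:frac-clas}, which already carries the factor $\sle(Q)^{(1-\delta)p}$ and has $\M\mu=\M w$ (to the first power) under the integral; a single application of $\M w\leq[w]_{A_1}w$ then finishes it. Alternatively one can derive it from the first inequality of the corollary by noting that for every $x\in Q$
\[
w(Q)\leq\lm{Q}\,\M w(x)\leq\lm Q\,[w]_{A_1}w(x),
\]
so $w(Q)^{\f{(1-\delta)p}n}w(x)^{1-\f{(1-\delta)p}n}\leq\sle(Q)^{(1-\delta)p}[w]_{A_1}^{\f{(1-\delta)p}n}w(x)$, and the two powers of $[w]_{A_1}$ combine to $[w]_{A_1}$. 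There is no genuine obstacle: the only subtle point is to check that the exponent $1-(1-\delta)p/n$ is nonnegative so that raising the pointwise bound $\M w\leq[w]_{A_1}w$ to this power preserves the inequality, and this is guaranteed by the standing hypothesis $(1-\delta)p<1<n$.
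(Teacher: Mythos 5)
Your proof is correct and follows essentially the same approach as the paper: apply \cref{thm:frac-clas} with $\intd\mu=w\intd x$, then use the pointwise $A_1$ bound $\M w\leq[w]_{A_1}w$ raised to the appropriate positive power. Your second route for the final inequality (deducing it from the first via $w(Q)\leq\lm Q\,[w]_{A_1}w(x)$ and $\lm Q^{(1-\delta)p/n}=\sle(Q)^{(1-\delta)p}$) is precisely the one the paper takes, and your alternative first route is equally valid.
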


\begin{proof}
The second inequality follows from the first inequality due to the fact that \(w(Q)/\lm Q\leq [w]_{A_1}w(x)\) for any \(x\in Q\).
It remains to prove the first inequality.
By \cref{thm:frac-clas} and the definition of $A_1$ weights,
we get
\begin{align*}
\int_Q
\int_Q
\f{|f(x)-f(y)|^p}{|x-y|^{n+\delta p}}
\intd y
\,
w(x)
\intd x
&\leq
\frac{2^{n-(1-\delta)p}n}{(1-\delta)p}
\frac{w(Q)^{\frac{(1-\delta)p}{n}}}{1-(1-\delta)p}
\int_Q
|\nabla f|^p
\,
(\M w)^{1-\frac{(1-\delta)p}{n}}
\intd x
\\
&\leq
\frac{2^{n-(1-\delta)p}n}{(1-\delta)p}
\frac{w(Q)^{\frac{(1-\delta)p}{n}}}{1-(1-\delta)p}
[w]_{A_1}^{1-\frac{(1-\delta)p}{n}}
\int_Q
|\nabla f|^p
\,
w^{1-\frac{(1-\delta)p}{n}}
\intd x .
\end{align*}
\end{proof}

The next \lcnamecref{coarea} is the coarea formula for Sobolev functions~\cite[Proposition~3.2]{swanson2007}.
\begin{lemma}
\label{coarea}
Let $f\in W_\loc^{1,1}(\mathbb R^n)$ and let $g: \mathbb R^n \to \mathbb R_+$ be a measurable function.
Then
\[
\int_{E}
|\nabla f(x)| \, g(x)
\intd x
=
\int_{-\infty}^\infty
\int_{E \cap \mb \{f>\lambda\}}
g(x)
\intd\sm x
\intd\lambda
\]
for every Lebesgue measurable set $E \subset \mathbb R^n$.
\end{lemma}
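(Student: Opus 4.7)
The plan is to reduce the statement to the standard coarea formula for Sobolev functions together with the identification of perimeter measure with \(\mathcal{H}^{n-1}\)-measure restricted to the measure-theoretic boundary. Both sides of the identity are linear and monotone in the nonnegative function \(g\), so by writing \(g\) as the pointwise increasing limit of simple functions \(g_k = \sum_{i} c_{k,i} \ind{A_{k,i}}\) and invoking monotone convergence, it suffices to prove the claim for \(g = \ind A\) with \(A\subset\mathbb{R}^n\) measurable. This in turn reduces to proving the unweighted version
\[
\int_F |\nabla f|\intd x = \int_{-\infty}^\infty \sm{F\cap\mb{\{f>\lambda\}}} \intd\lambda
\]
for every measurable \(F\subset\mathbb{R}^n\), by applying the original statement with \(g\equiv 1\) and \(E\) replaced by \(E\cap A\).

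First I would invoke the BV version of the coarea formula: for \(f\in W^{1,1}_\loc(\mathbb{R}^n)\) the superlevel set \(\{f>\lambda\}\) has locally finite perimeter for almost every \(\lambda\in\mathbb{R}\), and one has
\[
\int_F |\nabla f|\intd x = \int_{-\infty}^\infty \mathrm{Per}(\{f>\lambda\};F)\intd\lambda,
\]
where \(\mathrm{Per}(\,\cdot\,;F)\) denotes the perimeter measure of \(F\). Next I would apply the De Giorgi--Federer structure theorem for sets of finite perimeter, which implies the identity \(\mathrm{Per}(\{f>\lambda\};F) = \sm{F\cap\mb{\{f>\lambda\}}}\) for Borel sets \(F\) and for almost every \(\lambda\). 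Combining the two identities gives the unweighted formula for Borel \(F\); extension to arbitrary Lebesgue measurable \(F\) is routine by inner/outer regularity of the relevant measures.

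The main obstacle I anticipate is justifying the measurability of \(\lambda\mapsto\sm{F\cap\mb{\{f>\lambda\}}}\) for a general Lebesgue measurable \(F\), and correspondingly verifying that Fubini-type manipulations are legitimate. The standard way to handle this is to first establish everything for Borel \(F\), where measurability in \(\lambda\) follows from the countable structure of the reduced boundary (which is \(\sm{}\)-rectifiable) together with continuity properties of the superlevel sets in \(\lambda\), and then pass to general measurable \(F\) using Borel envelopes and the fact that \(\sm{}\) restricted to the reduced boundary is a Radon measure. Once these measurability issues are settled, the remaining steps are direct consequences of the classical coarea theorem and the structure of finite-perimeter sets, and the weighted version follows by the monotone-class reduction above.
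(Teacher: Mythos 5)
The paper does not prove this lemma; it attributes it directly to~\cite[Proposition~3.2]{swanson2007}, so there is no in-paper argument to compare against. Your plan is correct and is the standard route. Both sides are linear and monotone in $g\geq0$, so monotone convergence reduces to $g=\ind A$, which is the unweighted identity on $F=E\cap A$. For $f\in W^{1,1}_\loc$ one has $Df=\nabla f\,\mathcal L^n$, so the Fleming--Rishel/BV coarea formula gives $\int_B|\nabla f|\intd x=\int_{-\infty}^{\infty}\operatorname{Per}(\{f>\lambda\};B)\intd\lambda$ for Borel $B$, while the De Giorgi--Federer structure theorem identifies $\operatorname{Per}(\{f>\lambda\};\cdot)$ with $\mathcal H^{n-1}$ restricted to the essential boundary, which is precisely the paper's $\mb{}$ as defined in the preliminaries. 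The subtlety you flag, passing from Borel to arbitrary Lebesgue measurable $F$, is real and your resolution is the correct one: sandwich $F$ between Borel sets $B_1\subset F\subset B_2$ with $\lm{B_2\setminus B_1}=0$; the Borel case applied to $B_2\setminus B_1$ shows $\sm{(B_2\setminus B_1)\cap\mb{\{f>\lambda\}}}=0$ for a.e.\ $\lambda$, which yields both the $\mathcal H^{n-1}$-measurability of $F\cap\mb{\{f>\lambda\}}$ for a.e.\ $\lambda$ and the equality of the inner integrals over $B_1$, $F$ and $B_2$. Measurability of $\lambda\mapsto\sm{F\cap\mb{\{f>\lambda\}}}$ is then inherited from the Borel case, where it is part of the coarea theorem's statement. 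There is no gap.
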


Combining \cref{thm:wfracpoincare} with \cref{cor:frac-clas-A1} we obtain the corresponding weighted classical Poincar\'e inequality. 
For thoroughness, we also give another, direct proof for \cref{cor:weight-clas-poincare}
by applying the coarea formula and the relative isoperimetric inequality (\cref{rel.iso.ineq}) instead of \cref{lem:cla_weakcharf}.

\begin{corollary}
\label{cor:weight-clas-poincare}
Let $1\leq q \leq \frac{n}{n-1}$, $\alpha = n-q(n-1)$,
$f\in W^{1,1}_\loc(\mathbb{R}^n)$ and let
$\mu$ be a Radon measure with $\mu\ll\mathcal{L}$.
There exists a dimensional constant $C$ such that
\[
\biggl( \int_Q \lvert f-f_Q \rvert^q \intd\mu \biggr)^\frac{1}{q}
\leq
C
\int_Q
|\nabla f|
\,
(\Md_{\alpha,Q}\mu)^\frac{1}{q}
\intd x
\]
for every cube $Q\subset\mathbb{R}^n$.

Alternatively, we can assume that $\mu$ is a general Radon measure and the claim holds for any continuous function \(f\in W^{1,1}(\mathbb R^n)\).
\end{corollary}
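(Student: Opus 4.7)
I would pursue a direct proof closely paralleling the proof of \cref{thm:wfracpoincare}, with \cref{rel.iso.ineq} playing the role of the fractional-type isoperimetric inequality \cref{lem:cla_weakcharf} and the coarea formula \cref{coarea} replacing the integration over annuli. The starting point is the Cavalieri computation from \cref{eq_cavalieriq}, which by the symmetry $f\leftrightarrow -f$ reduces the claim to bounding
\[
\int_{f_Q}^\infty \mu(Q \cap \{f>\lambda\})^{1/q}\intd\lambda
=
\int_{f_Q}^{\max\{m_f,f_Q\}} \mu(Q \cap \{f>\lambda\})^{1/q}\intd\lambda
+
\int_{\max\{m_f,f_Q\}}^\infty \mu(Q \cap \{f>\lambda\})^{1/q}\intd\lambda
\]
exactly as in \cref{split-fractional}.

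For the tail $\lambda > m_f$, the superlevel set $\{f>\lambda\}$ has at most half the Lebesgue measure of $Q$, so I apply \cref{C-Z} at level $1/2$ to $E = \{f>\lambda\}$ in $Q$, producing dyadic cubes $\{Q_i\}$ with $\lm{Q_i \cap E}/\lm{Q_i}$ bounded above and below by dimensional constants. Since $Q_i$ itself is a dyadic subcube of $Q$ containing each $x\in Q_i$, the dyadic maximal function satisfies $\Md_{\alpha,Q}\mu(x) \geq \sle(Q_i)^\alpha \mu(Q_i)/\lm{Q_i}$; combined with the key identity $(n-\alpha)/q = n-1$ (which is exactly the condition $\alpha = n - q(n-1)$) this rearranges to
\[
\mu(Q_i)^{1/q} \leq \sle(Q_i)^{n-1}(\Md_{\alpha,Q}\mu(x))^{1/q}
\]
for every $x\in Q_i$. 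The relative isoperimetric inequality applied to $E\cap Q_i$, together with the Calder\'on--Zygmund property that allows replacing $\lm{Q_i\cap E}$ by $\lm{Q_i}$ up to a dimensional constant, gives $\sle(Q_i)^{n-1} \leq C\sm{Q_i\cap \mb E}$, hence
\[
\mu(Q_i)^{1/q} \leq C\int_{Q_i\cap \mb\{f>\lambda\}}(\Md_{\alpha,Q}\mu)^{1/q}\intd\mathcal H^{n-1}.
\]
Summing over $i$, integrating in $\lambda$ over $(m_f,\infty)$, and applying \cref{coarea} with $g = (\Md_{\alpha,Q}\mu)^{1/q}$ produces the desired bound $C\int_Q \lvert\nabla f\rvert (\Md_{\alpha,Q}\mu)^{1/q}\intd x$.

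For the median part, nonempty only when $f_Q < m_f$, I would mirror the trivial bookkeeping in the proof of \cref{thm:wfracpoincare}: bound $\mu(Q\cap\{f>\lambda\})\leq\mu(Q)$, use the identity $\int_{f_Q}^\infty \lm{Q\cap\{f>\lambda\}}\intd\lambda = \int_{-\infty}^{f_Q}\lm{Q\cap\{f<\lambda\}}\intd\lambda$, and pull out one factor of $\lm{Q\cap\{f<\lambda\}}^{1/n}\leq(\lm{Q}/2)^{1/n}$, valid because $\{f<\lambda\}$ has measure at most $\lm Q/2$ for $\lambda<m_f$. This reduces the integrand to $\lm{Q\cap\{f<\lambda\}}^{(n-1)/n}$, which is dominated by $\sm{Q\cap\mb\{f<\lambda\}}$ via \cref{rel.iso.ineq}; the pointwise bound $\mu(Q)^{1/q}/\lm Q^{(n-1)/n}\leq\inf_{x\in Q}(\Md_{\alpha,Q}\mu(x))^{1/q}$ (again using $(n-\alpha)/q=n-1$) supplies the weight, and the coarea formula closes the estimate.

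The main obstacle is exponent bookkeeping: the $\sle(Q_i)^{(n-\alpha)/q}$ coming out of the dyadic maximal function must exactly match the $\sle(Q_i)^{n-1} = \lm{Q_i}^{(n-1)/n}$ produced by the isoperimetric inequality, and this match forces precisely $\alpha = n-q(n-1)$. Once this matching is in place, the argument is a faithful translation of the fractional proof back to the classical setting, and I anticipate no serious analytical difficulty beyond it. As noted in the excerpt, a shorter proof is also available by chaining \cref{thm:wfracpoincare} at parameter $\delta\in(0,1)$ with the $p=1$ estimate from \cref{thm:frac-clas} applied to the measure $d\nu=(\Md_{\alpha,Q}\mu)^{1/q}\intd x$ on $Q$; the $(1-\delta)$ prefactor from the former cancels the $1/(1-\delta)$ blow-up in the latter, leaving a dimensional constant.
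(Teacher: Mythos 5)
Your main argument is essentially identical to the paper's "Proof 2" of this corollary: the Cavalieri reduction and split at $\max\{m_f,f_Q\}$ mirror the proof of \cref{thm:wfracpoincare}, and then the paper too applies \cref{C-Z}, \cref{rel.iso.ineq} and \cref{coarea} in exactly the way you describe, with the exponent identity $(n-\alpha)/q=n-1$ matching $\sle(Q_i)^{n-1}$ against the dyadic maximal function. Your closing remark corresponds to the paper's "Proof 1," though there one must work at the $\delta$-dependent exponent $q_\delta=(n-\alpha)/(n-\delta)$ and pass to the limit $\delta\to1$ (and handle $q=1$ separately, since the $A_1$ constant of $(\Md_{\alpha,Q}\mu)^{1/q_\delta}$ degenerates as $\delta\to1$ when $\alpha=1$), so it is slightly less immediate than your sketch suggests.
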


\begin{proof}[Proof 1]
We prove the claim first
for \(1<q\leq\f n{n-1}\), which means \(0\leq\alpha<1\).
For any \(\alpha<\delta<1\) let \(q_\delta=\f{n-\alpha}{n-\delta}\).
Note that \(q_\delta\to q\) for \(\delta\to 1\).
The function $(\Md_{\alpha,Q}\mu)^\frac{1}{q_\delta}$ is an $A_1$ weight with 
\[
\bigl[(\Md_{\alpha,Q}\mu)^\frac{1}{q_\delta}\bigr]_{A_1}
=
\bigl[(\Md_{\alpha,Q}\mu)^{\f n{n-\alpha}-\f\delta{n-\alpha}}\bigr]_{A_1}
\leq
\f{
15^n4n
}
\delta
\]
by for example~\cite[Lemma~3.6]{hurri2022},
and we may apply 
\cref{cor:frac-clas-A1} with \(p=1\) to get
\begin{align*}
\int_Q
\int_Q
\f{|f(x)-f(y)|}{|x-y|^{n+\delta}}
\intd y
\, (\Md_{\alpha,Q}\mu(x))^\frac{1}{q_\delta}
\intd x
&\leq
\frac{30^nn^22^{1+\delta}}{\delta^2(1-\delta)}
\sle(Q)^{1-\delta}
\int_Q
|\nabla f|
\,
(\Md_{\alpha,Q}\mu)^\frac{1}{q_\delta}
\intd x .
\end{align*}
Then \cref{thm:wfracpoincare} further implies 
that there exists a constant $C_1$ such that
\begin{align*}
\biggl( \int_Q \lvert f-f_Q \rvert^{q_\delta} \intd\mu \biggr)^\frac{1}{q_\delta}
&\leq
C_1
(1-\delta)
\int_Q
\int_Q
\f{|f(x)-f(y)|}{|x-y|^{n+\delta}}
\intd y
\, (\Md_{\alpha,Q}\mu(x))^\frac{1}{q_\delta}
\intd x
\\
&\leq
\frac{C}{\delta^2}
\sle(Q)^{1-\delta}
\int_Q
|\nabla f|
\,
(\Md_{\alpha,Q}\mu)^\frac{1}{q_\delta}
\intd x ,
\end{align*}
where $C = 30^n 4 n^2 C_1 $.
For any function \(g:\mathbb R^n\to\mathbb R\) the restriction \(g^{q_\delta}\ind{g\leq1}\) is bounded by \(1\), and \(g^{q_\delta}\ind{g>1}\) is pointwise increasing in \(\delta\).
Thus, by the monotone and the dominated convergence theorem both sides of the previous display converge for \(\delta\to1\) to the desired limit, concluding the proof for \(q>1\).

In order to prove the claim for \(q=1\) we have to differentiate between the two alternatives in the assumptions of the \lcnamecref{cor:weight-clas-poincare}.
We first consider the case that \(f\) is a Sobolev function and \(\mu\) is absolutely continuous.
Then \(\mu\) has a density function $v\in L_\loc^1(\mathbb{R}^n)$ by the Radon--Nikodym theorem.
Let \(k\in\mathbb N\) and denote by \(\mu_k\) the truncated measure that has the bounded density $\min\{v,k\}$.
Then \((\Md_{\alpha,Q}\mu_k(x))^{\f1q}\) is uniformly bounded in \(\alpha\), \(q\) and \(x\) and converges pointwise to \(\Md_{1,Q}\mu_k(x)\) for \(q\to 1\).
Thus by Fatou's lemma and the dominated convergence theorem we have
\begin{equation}
\label{eq_qto1}
\begin{split}
\int_Q \lvert f-f_Q \rvert \intd\mu_k
&\leq
\liminf_{q\to 1}
\biggl( \int_Q \lvert f-f_Q \rvert^q \intd\mu_k \biggr)^\frac{1}{q}
\\
&\leq
C
\liminf_{q\to 1}
\int_Q
|\nabla f|
\,
(\Md_{\alpha,Q}\mu_k)^\frac{1}{q}
\intd x
\\
&=
C
\int_Q
|\nabla f|
\,
\Md_{1,Q}\mu_k
\intd x
.
\end{split}
\end{equation}
Because \(\mu_k\) converges to \(\mu\) and \(\Md_{1,Q}\mu_k\) converges to \(\Md_{1,Q}\mu\) pointwise monotonously from below
we can use the monotone convergence theorem
to conclude from the previous display that
\begin{equation}
\label{eq_ktoinfty}
\begin{split}
\int_Q \lvert f-f_Q \rvert \intd\mu
&=
\lim_{k\to\infty}
\int_Q \lvert f-f_Q \rvert \intd\mu_k
\\
&\leq
C
\lim_{k\to\infty}
\int_Q
|\nabla f|
\,
\Md_{1,Q}\mu_k
\intd x
\\
&=
C
\int_Q
|\nabla f|
\,
\Md_{1,Q}\mu
\intd x
,
\end{split}
\end{equation}
finishing the proof for \(q=1\) in the case that \(f\in W^{1,1}_\loc(\mathbb R^n)\) and \(\mu\ll\mathcal L\).

In the case that \(f\) is continuous and \(\mu\) is a general Radon measure  the proof goes the same, except we let \(\mu_k\) not be a truncation, but instead the measure that averages \(\mu\) over dyadic cubes of scale \(2^{-k}\sle(Q)\), i.e.
\[
\mu_k(E)
=
\sum_{P\in\mathcal D_k(Q)}
\lm{E\cap P}
\f{\mu(P)}{\lm P}
.
\]
Then \((\Md_{\alpha,Q}\mu_k(x))^{\f1q}\) is uniformly bounded in \(\alpha\), \(q\) and \(x\) and converges pointwise to \(\Md_{1,Q}\mu_k(x)\) for \(q\to 1\), which means we can conclude \cref{eq_qto1} also in this case.
Also \(\Md_{1,Q}\mu_k\) converges to \(\Md_{1,Q}\mu\) pointwise monotonously from below.
Furthermore, \(\mu_k\) converges to \(\mu\) weakly, see \cite[Theorem~1.40]{evansgariepy}.
Thus, we can conclude \cref{eq_ktoinfty} also in this case, finishing the proof for \(q=1\) also in the case that \(f\in W^{1,1}_\loc(\mathbb R^n)\) is continuous and \(\mu\) is a general Radon measure.
\end{proof}

\begin{proof}[Proof 2]
Fix $Q \subset \mathbb R^n$ and denote $\Omega_\lambda = \{x\in Q: \lvert f-f_Q \rvert > \lambda \}$.
As in the proof of \cref{thm:wfracpoincare}, we reduce the problem to bounding the sum
\begin{equation}
\label{split}
\int_{f_Q}^{\max\{m_f,f_Q\}}
\mu(Q \cap \{f > \lambda \})^\frac{1}{q}
\intd\lambda 
+
\int_{\max\{m_f,f_Q\}}^\infty
\mu(Q \cap \{f > \lambda \})^\frac{1}{q} \intd\lambda
,
\end{equation}
and estimate the first summand by
\begin{align*}
2^\frac{n-1}{n}
\frac{
\mu(Q)^\frac{1}{q}
}{
\lm{Q}^\frac{n-1}{n}
}
\int_{-\infty}^{f_Q}
\lm{Q\cap\{f<\lambda\}}^{\frac{n-1}n}
\intd\lambda
&\leq 2 C_1
\frac{
\mu(Q)^\frac{1}{q}
}{
\lm{Q}^\frac{n-1}{n}
}
\int_{-\infty}^{f_Q}
\sm{Q\cap\mb{\{f<\lambda\}}}
\intd\lambda
\\
&\leq 2 C_1
\frac{
\mu(Q)^\frac{1}{q}
}{
\lm{Q}^\frac{n-1}{n}
}
\int_Q|\nabla f|
\intd x
\\
&\leq 2 C_1
\int_Q
|\nabla f| \, (\Md_{\alpha,Q}\mu)^\frac{1}{q}
\intd x
,
\end{align*}
where we used \(\lm{Q\cap\{f<\lambda\}}\leq\lm Q/2\), \cref{rel.iso.ineq}, \cref{coarea} and
\[
\frac{\mu(Q)^\frac{1}{q}}{\lm{Q}^\frac{n-1}{n}} \leq (\Md_{\alpha,Q}\mu(x))^\frac{1}{q}
\]
for every $x\in Q$.

It is left to estimate the second term in~\cref{split}.
In that case, we have \(\lm{Q\cap\{f>\lambda\}}\leq\lm Q/2\) since $\lambda > m_f$.
We apply \cref{C-Z} for $E=\{f>\lambda\}$ on $Q$ at level $\f12$
to obtain a collection \(\{Q_i\}_i\) of Calder\'{o}n--Zygmund cubes
such that
$Q\cap\{f>\lambda\} \subset \bigcup_i Q_i$ up to a set of Lebesgue measure zero and
\[
\frac{1}{2^{n+1}} < \frac{\lm{Q_i\cap\{f>\lambda\}}}{\lm{Q_i}} \leq \frac{1}{2} .
\]
By \cref{rel.iso.ineq}, we have
\[
\lm{Q_i}^\frac{n-1}{n}
\leq 
2^{(n+1)\frac{n-1}{n}} \lm{Q_i\cap \{f>\lambda\}}^\frac{n-1}{n}
\leq
C_2 \sm{Q_i \cap\mb\{f>\lambda\}},
\]
where $C_2 = 2^n C_1$.
Since
\[
\mu(Q \cap\{f>\lambda\})
\leq
\mu\Bigl(\bigcup_iQ_i \Bigr)
\]
by \cref{C-Z}
and
\[
\frac{\mu(Q_i)}{\lm{Q_i}^{\frac{n-1}{n}q}} \leq \Md_{\alpha,Q}\mu(x)
\]
for every $x\in Q_i$,
it follows that
\begin{align*}
\mu(Q \cap\{f>\lambda\})^\frac{1}{q} &\leq \sum_i \mu(Q_i)^\frac{1}{q} \\
&\leq C_2 \sum_i \frac{\mu(Q_i)^\frac{1}{q}}{\lm{Q_i}^\frac{n-1}{n}} \sm{Q_i \cap\mb\{f>\lambda\}} \\
&\leq C_2 \sum_i \int_{Q_i \cap\mb\{f>\lambda\}} (\Md_{\alpha,Q}\mu)^\frac{1}{q} \intd\mathcal{H}^{n-1} \\
&\leq C_2 \int_{Q \cap\mb\{f>\lambda\}} (\Md_{\alpha,Q}\mu)^\frac{1}{q} \intd\mathcal{H}^{n-1} .
\end{align*}
Integrating both sides in $\lambda$ and applying \cref{coarea}, we obtain
\begin{align*}
\int_{\max\{m_f,f_Q\}}^\infty
\mu(Q \cap \{f>\lambda\})^\frac{1}{q}
\intd\lambda
&\leq
C_2
\int_{\max\{m_f,f_Q\}}^\infty
\int_{Q \cap\mb\{f>\lambda\}} (\Md_{\alpha,Q}\mu)^\frac{1}{q} \intd\mathcal{H}^{n-1}
\intd\lambda \\
&\leq 
C_2 
\int_Q
|\nabla f| \, (\Md_{\alpha,Q}\mu)^\frac{1}{q}
\intd x .
\end{align*}
We have bounded both summands in \cref{split} which means we can conclude that
\[
\biggl( \int_Q \lvert f-f_Q \rvert^q \intd\mu \biggr)^\frac{1}{q} \leq C
\int_Q
|\nabla f| \, (\Md_{\alpha,Q}\mu)^\frac{1}{q}
\intd x ,
\]
where $C = 8 (1+2^{n-1}) C_1 $.
\end{proof}

\section{Examples against weighted $(q,p)$-Poincar\'e inequalities for $p>1$
and against a larger fractional parameter}
\label{sec:counterexample}

In this section,
we prove that the corresponding $L^p$-versions of the weighted fractional and classical Poincar\'e inequalities \cref{thm:wfracpoincare,cor:weight-clas-poincare} do not hold.
This is motivated by ~\cite[Theorems 2.4 \& 2.9]{hurri2022} where sub-optimal results were obtained.

More precisely we show that for every cube \(Q\subset\mathbb R^n\),
$1<p<n$, $p\leq q \leq \frac{np}{n-p}$, $\alpha = n-\frac{q}{p}(n-p)$,
and \(C>0\) there is
a Radon measure $\mu\ll\mathcal{L}$ and a Lipschitz function $f$ with
\begin{equation}
\label{eq:(p_q)-wpoincare}
\biggl( \int_Q \lvert f-f_Q \rvert^q \intd\mu \biggr)^\frac{1}{q}
> C
\biggl(
\int_Q
|\nabla f|^p
\,
(\M_\alpha\mu)^{\frac{p}{q}}
\intd x
\biggr)^\frac{1}{p}
,
\end{equation}
and that for any
$0<\delta<1$, $1<p<\min\bigl\{\frac{n}{\delta},\f1{1-\delta}\bigr\}$, $p\leq q \leq \frac{np}{n-\delta p}$, $\alpha = n-\frac{q}{p}(n-\delta p)$,
and \(C>0\) there is
a Radon measure $\mu\ll\mathcal{L}$ and a Lipschitz function $f$ with
\begin{equation}
\label{eq:(p_q)-wfracpoincare}
\biggl( \int_Q \lvert f-f_Q \rvert^q \intd\mu \biggr)^\frac{1}{q}
>
C
(1-\delta)^\f1p
\biggl(
\int_Q
\int_Q
\f{|f(x)-f(y)|^p}{|x-y|^{n+\delta p}}
\intd y
\, 
(\M_\alpha\mu(x))^\frac{p}{q}
\intd x
\biggr)^\frac{1}{p}
.
\end{equation}
We do not know if the condition \(p<\f1{1-\delta}\) is necessary.

Moreover, we show that given \(q\geq1\) (and \(\delta\)), the value
\(\alpha = n-q(n-1)\)
(or
\(\alpha = n-q(n-\delta)\)
respectively)
for the fractional parameter is the best possible for which \cref{thm:wfracpoincare,cor:weight-clas-poincare} hold in the following sense.
For any \(\varepsilon\geq0\) we have the pointwise inequality
\(
\Md_{\alpha,Q}\mu(x)
\leq
\sle(Q)^\varepsilon
\Md_{\alpha-\varepsilon,Q}\mu(x)
\)
for \(x\in Q\).
Hence, \cref{thm:wfracpoincare,cor:weight-clas-poincare} also hold with \(\sle(Q)^\varepsilon\Md_{\alpha-\varepsilon,Q}\mu(x)\) instead of \(\Md_{\alpha,Q}\mu(x)\).
This argument clearly only works for \(\varepsilon\geq0\).
And indeed, we show that \cref{thm:wfracpoincare,cor:weight-clas-poincare} fail when we replace \(\Md_{\alpha,Q}\mu(x)\) by \(\sle(Q)^{-\varepsilon}\Md_{\alpha+\varepsilon,Q}\mu(x)\) with any \(\varepsilon>0\).
We show this even for the fractional maximal function \(\M_\alpha\mu\) which is larger than \(\Md_{\alpha,Q}\mu\) up to a constant.
More precisely, we show that for any \(1\leq q\leq\f n{n-1}\), \(\alpha = n- q(n-1)\), \(\varepsilon>0\) and \(C>0\)
there is a Radon measure $\mu\ll\mathcal{L}$ 
and a Lipschitz function $f$
\begin{equation}
\label{eq_optimalalpha}
\biggl( \int_Q \lvert f-f_Q \rvert^q \intd\mu \biggr)^\frac{1}{q}
>
C
\f1
{
\sle(Q)^{\f\varepsilon q}
}
\int_Q
|\nabla f|
\,
(\M_{\alpha+\varepsilon}\mu)^\frac{1}{q}
\intd x
,
\end{equation}
and that for any \(0<\delta<1\), \(1\leq q\leq\f n{n-\delta}\), \(\alpha=n-q(n-\delta)\), \(\varepsilon>0\) and \(C>0\)
there is a Radon measure $\mu\ll\mathcal{L}$ 
and a Lipschitz function $f$ with
\begin{equation}
\label{eq_optimalalphafrac}
\biggl( \int_Q \lvert f-f_Q \rvert^q \intd\mu \biggr)^\frac{1}{q}
>
C
\f{
1-\delta
}{
\sle(Q)^{\f\varepsilon q}
}
\int_Q
\int_Q
\f{|f(x)-f(y)|}{|x-y|^{n+\delta}}
\intd y
\, 
(\M_{\alpha+\varepsilon}\mu(x))^\frac1{q}
\intd x
.
\end{equation}

We proceed with the proof of \cref{eq:(p_q)-wpoincare,eq:(p_q)-wfracpoincare,eq_optimalalpha,eq_optimalalphafrac}.
Let $Q_0 \subset B(0,1)$
be the cube with center \(0\) and sidelength \(1/\sqrt{n}\).
By translation and dilation it suffices to find a function \(f\) and a measure \(\mu\) which satisfy \cref{eq:(p_q)-wpoincare,eq:(p_q)-wfracpoincare,eq_optimalalpha,eq_optimalalphafrac} on \(Q_0\).
Consider the sequence of absolutely continuous measures and Lipschitz functions
\[
\mu_k(A) 
=
\frac{\lm{B(0,e^{-k})\cap A}}{\lm{B(0,e^{-k})}}
\qquad\text{and}\qquad
f_k(x)
=
\min\{
-\log |x|
,
k
\}
\]
with \(k\in\mathbb{N}\).
Denote
\[
\int_{Q_0}
|f_k|
\intd x
\leq
\int_{Q_0}
\bigl|\log|x|\bigr|
\intd x
=
c<\infty
.
\]
It holds that $f_k(x)= k$ for $|x| \leq e^{-k}$.
Thus, for \(k\geq c\)
we have
\begin{equation}
\label{eq_oscillation2}
\biggl(
\int_{Q_0}
|f_k-(f_k)_{Q_0}|^q\intd\mu_k
\biggr)^{\f 1q}
\geq
k-c
\end{equation}
for any \(q\geq1\).
Furthermore for any \(0\leq\alpha\leq n\) we have
\[
\M_\alpha \mu_k(x) \leq \frac{C_1}{(|x|+e^{-k})^{n-\alpha}}
\leq
\frac{C_1}{|x|^{n-\alpha}}
\]
for some dimensional constant $C_1$
and
\[
|\nabla f_k(x)|
=
\f1{|x|}
\ind{\{|\cdot|\geq e^{-k}\}}(x)
.
\]
Let \(p>1\), \(q\) and \(\alpha\) be as specified for \cref{eq:(p_q)-wpoincare}. 
Then
\begin{equation}
\label{eq_gradientnorm2}
\begin{split}
\int_{Q_0}
|\nabla f_k|^p
\,
(\M_\alpha\mu_k)^{\f pq}
\intd x
&\leq
\int_{B(0,1)}
|\nabla f_k|^p
\,
(\M_\alpha\mu_k)^{\f pq}
\intd x
\\
&\leq
n\sigma_n
C_1^\frac{p}{q}
\int_0^1
\f1{r^p}
\ind{\{r\geq e^{-k}\}}
r^{(\alpha-n)\f pq}
r^{n-1}
\intd r
\\
&=
C_2
\int_{e^{-k}}^1
r^{-p+(\alpha-n)\f pq+n-1}
\intd r
\\
&=
C_2
\int_{e^{-k}}^1
r^{-p-(n-p)+n-1}
\intd r
\\
&=
C_2
\int_{e^{-k}}^1
\f1r
\intd r
\\
&=
C_2
(\log(1)-\log(e^{-k}))
\\
&=
C_2
k
,
\end{split}
\end{equation}
where
$C_2 = n\sigma_n C_1^\frac{p}{q}$.
Choosing $k\in\mathbb{N}$ large enough, for example
\[
k>(C C_2^{\f1p} +c)^\frac{p}{p-1}
,
\]
we use \cref{eq_oscillation2,eq_gradientnorm2} to conclude \cref{eq:(p_q)-wpoincare} for \(p>1\).

We use the same sequence of functions and measures to satisfy the remaining inequalities \cref{eq:(p_q)-wfracpoincare,eq_optimalalpha,eq_optimalalphafrac}.
In order to find \(k\) such that \(\mu_k,f_k\) satisfies \cref{eq_optimalalpha},
let $p$, \(\alpha\), \(q\) and \(\varepsilon>0\) be as specified there.
Then we have
\begin{equation}
\label{eq_gradientnormalpha}
\begin{split}
\f1{\sle(Q_0)^{\frac\varepsilon q}}
\int_{Q_0}
|\nabla f_k|
\,
(\M_{\alpha+\varepsilon} \mu_k)^{\f 1q}
\intd x
&\leq
n^{\f\varepsilon{2q}}
\int_{B(0,1)}
|\nabla f_k|
\,
(\M_{\alpha+\varepsilon}\mu_k)^{\f 1q}
\intd x
\\
&\leq
n^{1+\f{\varepsilon}{2q}}\sigma_n
C_1^\frac{1}{q}
\int_0^1
\f1{r}
\ind{\{r\geq e^{-k}\}}
r^{(\alpha+\varepsilon-n)\f 1q}
r^{n-1}
\intd r
\\
&=
C_3
\int_{e^{-k}}^1
r^{(\alpha+\varepsilon-n)\f 1q+n-2}
\intd r
\\
&=
C_3
\int_{e^{-k}}^1
r^{\f\varepsilon q-1}
\intd r
\\
&=
\frac{C_3q}{\varepsilon}
(1-e^{-\f\varepsilon q k})
,
\end{split}
\end{equation}
where \(C_3=n^{1+\f{\varepsilon}{2q}}\sigma_nC_1^\frac{1}{q}\).
Choosing $k\in\mathbb{N}$ large enough, for example
\[
k > \frac{2^{\f\varepsilon q} CC_3q}{\varepsilon}+c
,
\]
we use \cref{eq_oscillation2,eq_gradientnormalpha} to conclude \cref{eq_optimalalpha} for \(\varepsilon>0\).

In order to find \(k\) such that \(\mu_k,f_k\) satisfies \cref{eq:(p_q)-wfracpoincare,eq_optimalalphafrac}, we first note that
by for example~\cite[Lemma~3.6]{hurri2022},
both \((\M_\alpha\mu)^{\f pq}\) and \((\M_{\alpha+\varepsilon}\mu)^{\f1q}\) are \(A_1\)-weights with
\begin{align*}
\bigl[(\M_\alpha\mu)^{\f pq}\bigr]_{A_1}
&\leq
\f{
15^n4n
}{
p\delta
}
\\
\bigl[(\M_{\alpha+\varepsilon}\mu)^{\f1q}\bigr]_{A_1}
&\leq
\f{
15^n4n
}{
\delta
+
\f{(n-\delta)}{n-\alpha}\varepsilon
}
.
\end{align*}
Since by assumption we have \(\delta>0\) or \(\delta>\f{p-1}p\) respectively,
we can apply \cref{cor:frac-clas-A1} with
\(
w=
(\M_\alpha\mu)^{\f pq}
\) and 
\(
w=
(\M_{\alpha+\varepsilon}\mu)^{\f1q}
,
\) 
and we conclude \cref{eq:(p_q)-wfracpoincare,eq_optimalalphafrac} from \cref{eq:(p_q)-wpoincare,eq_optimalalpha} respectively.


\begin{thebibliography}{10}

\bibitem{AlbericoCianchi2020}
Angela Alberico, Andrea Cianchi, Lubo\v{s} Pick, and Lenka Slav\'{\i}kov\'{a},
  \emph{On the limit as {$s\to 0^+$} of fractional {O}rlicz-{S}obolev spaces},
  J. Fourier Anal. Appl. \textbf{26} (2020), no.~6, Paper No. 80, 19.

\bibitem{BBManotherlook2001}
Jean Bourgain, Ha\"{\i}m Brezis, and Petru Mironescu, \emph{Another look at
  {S}obolev spaces}, Optimal control and partial differential equations,
  pp.~439--455.

\bibitem{bbm2002}
\bysame, \emph{Limiting embedding theorems for {$W^{s,p}$} when {$s\uparrow1$}
  and applications}, vol.~87, 2002, Dedicated to the memory of Thomas H. Wolff,
  pp.~77--101.

\bibitem{brezis2002constant}
Ha\"{\i}m Brezis, \emph{How to recognize constant functions. {A} connection
  with {S}obolev spaces}, Uspekhi Mat. Nauk \textbf{57} (2002), no.~4(346),
  59--74.

\bibitem{brezisSchaftingenYung2021}
Ha\"{\i}m Brezis, Jean Van~Schaftingen, and Po-Lam Yung, \emph{A surprising
  formula for {S}obolev norms}, Proc. Natl. Acad. Sci. USA \textbf{118} (2021),
  no.~8, Paper No. e2025254118, 6.

\bibitem{CantoPerez2021}
Javier Canto and Carlos P\'{e}rez, \emph{Extensions of the {J}ohn-{N}irenberg
  theorem and applications}, Proc. Amer. Math. Soc. \textbf{149} (2021), no.~4,
  1507--1525.

\bibitem{CejasMosqueraPerezRela2021}
Mar{\'\i}a~Eugenia Cejas, Carolina Mosquera, Carlos P{\'e}rez, and Ezequiel
  Rela, \emph{Self-improving poincar{\'e}-sobolev type functionals in product
  spaces}, Journal d'Analyse Math{\'e}matique (2022), 1--48.

\bibitem{chua2019}
Seng-Kee Chua, \emph{Embedding and compact embedding for weighted and abstract
  {S}obolev spaces}, Pacific J. Math. \textbf{303} (2019), no.~2, 519--568.

\bibitem{DrelichmanRicardo2022}
Irene Drelichman and Ricardo~G. Dur\'{a}n, \emph{The
  {B}ourgain-{B}r\'{e}zis-{M}ironescu formula in arbitrary bounded domains},
  Proc. Amer. Math. Soc. \textbf{150} (2022), no.~2, 701--708.

\bibitem{DydaLizavetaVahakangas2016}
Bart{\l}omiej Dyda, Lizaveta Ihnatsyeva, and Antti~V. V\"{a}h\"{a}kangas,
  \emph{On improved fractional {S}obolev-{P}oincar\'{e} inequalities}, Ark.
  Mat. \textbf{54} (2016), no.~2, 437--454.

\bibitem{DydaLehrbackVahakangas2022}
Bart{\l}omiej Dyda, Juha Lehrb{\"a}ck, and Antti~V V{\"a}h{\"a}kangas,
  \emph{Fractional {P}oincar{\'e} and localized {H}ardy inequalities on metric
  spaces}, Advances in Calculus of Variations (2022).

\bibitem{evansgariepy}
Lawrence~C. Evans and Ronald~F. Gariepy, \emph{Measure theory and fine
  properties of functions}, revised ed., Textbooks in Mathematics, CRC Press,
  Boca Raton, FL, 2015.

\bibitem{FranchiPerezWheeden1998}
Bruno Franchi, Carlos P\'{e}rez, and Richard~L. Wheeden, \emph{Self-improving
  properties of {J}ohn-{N}irenberg and {P}oincar\'{e} inequalities on spaces of
  homogeneous type}, J. Funct. Anal. \textbf{153} (1998), no.~1, 108--146.

\bibitem{perez2000}
\bysame, \emph{Sharp geometric {P}oincar\'{e} inequalities for vector fields
  and non-doubling measures}, Proc. London Math. Soc. (3) \textbf{80} (2000),
  no.~3, 665--689.

\bibitem{hurri2022}
Ritva Hurri-Syrj{\"a}nen, Javier~C. Mart{\'\i}nez-Perales, Carlos P{\'e}rez,
  and Antti~V. V{\"a}h{\"a}kangas, \emph{{On the BBM-phenomenon in fractional
  Poincar{\'e}--Sobolev inequalities with weights}}, International Mathematics
  Research Notices (2022).

\bibitem{hurri2023}
Ritva Hurri-Syrj{\"a}nen, Javier~C Mart{\'\i}nez-Perales, Carlos P{\'e}rez, and
  Antti~V V{\"a}h{\"a}kangas, \emph{On the weighted inequality between the
  {G}agliardo and {S}obolev seminorms}, arXiv preprint arXiv:2302.14029 (2023).

\bibitem{KaraMilmanXiao2005}
G.~E. Karadzhov, M.~Milman, and J.~Xiao, \emph{Limits of higher-order {B}esov
  spaces and sharp reiteration theorems}, J. Funct. Anal. \textbf{221} (2005),
  no.~2, 323--339.

\bibitem{LernerLoristOmbrosi2022}
Andrei~K. Lerner, Emiel Lorist, and Sheldy Ombrosi, \emph{Operator-free sparse
  domination}, Forum Math. Sigma \textbf{10} (2022), Paper No. e15, 28.

\bibitem{mazya2002}
V.~Maz'ya and T.~Shaposhnikova, \emph{On the {B}ourgain, {B}rezis, and
  {M}ironescu theorem concerning limiting embeddings of fractional {S}obolev
  spaces}, J. Funct. Anal. \textbf{195} (2002), no.~2, 230--238.

\bibitem{mazyabook}
Vladimir Maz'ya, \emph{Sobolev spaces with applications to elliptic partial
  differential equations}, augmented ed., Grundlehren der mathematischen
  Wissenschaften [Fundamental Principles of Mathematical Sciences], vol. 342,
  Springer, Heidelberg, 2011.

\bibitem{MeyersZiemer1977}
Norman~G. Meyers and William~P. Ziemer, \emph{Integral inequalities of
  {P}oincar\'{e} and {W}irtinger type for {BV} functions}, Amer. J. Math.
  \textbf{99} (1977), no.~6, 1345--1360.

\bibitem{PerezRela2019}
Carlos P\'{e}rez and Ezequiel Rela, \emph{Degenerate {P}oincar\'{e}-{S}obolev
  inequalities}, Trans. Amer. Math. Soc. \textbf{372} (2019), no.~9,
  6087--6133.

\bibitem{ponce2004}
Augusto~C. Ponce, \emph{An estimate in the spirit of {P}oincar\'{e}'s
  inequality}, J. Eur. Math. Soc. (JEMS) \textbf{6} (2004), no.~1, 1--15.

\bibitem{swanson2007}
David Swanson, \emph{Area, coarea, and approximation in {$W^{1,1}$}}, Ark. Mat.
  \textbf{45} (2007), no.~2, 381--399.

\bibitem{wendel1948}
J.~G. Wendel, \emph{Note on the gamma function}, Amer. Math. Monthly
  \textbf{55} (1948), 563--564.

\end{thebibliography}
\end{document}